\tikzset{
    >=stealth,
    every picture/.style={thick},
    graphs/every graph/.style={empty nodes},
}
\tikzstyle{vertex}=[
\tikzstyle{printersafe}=[decoration={snake,amplitude=0pt}]
\newcommand{\supp}{\operatorname{supp}}
\newcommand{\vol}{\operatorname{vol}}
\newcommand{\Aut}{\operatorname{Aut}}
\newcommand{\pp}{\mathbb{P}}
\newcommand{\qq}{\mathbb{Q}}
\newcommand{\zz}{\mathbb{Z}}
\newcommand{\nn}{\mathbb{N}}
\newcommand{\rr}{\mathbb{R}}
\newcommand{\cc}{\mathbb{C}}
\newcommand{\kk}{\mathbb{K}}
\def\O#1.{\mathcal {O}_{#1}}			
\def\pr #1.{\mathbb P^{#1}}				
\def\af #1.{\mathbb A^{#1}}			
\def\ses#1.#2.#3.{0\to #1\to #2\to #3 \to 0}	
\def\xrar#1.{\xrightarrow{#1}}			
\def\K#1.{K_{#1}}						
\def\bA#1.{\mathbf{A}_{#1}}			
\def\bM#1.{\mathbf{M}_{#1}}				
\def\bL#1.{\mathbf{L}_{#1}}				
\def\bB#1.{\mathbf{B}_{#1}}				
\def\bK#1.{\mathbf{K}_{#1}}			
\def\subs#1.{_{#1}}					
\def\sups#1.{^{#1}}						
\DeclareMathOperator{\coeff}{coeff}
  \newtheorem{theorem}{Theorem}[section]
  \newtheorem{lemma}[theorem]{Lemma}
  \newtheorem{proposition}[theorem]{Proposition}
  \newtheorem{notation}[theorem]{Notation}
  \newtheorem{definition}[theorem]{Definition}
  \newtheorem{example}[theorem]{Example}
  \newtheorem{question}[theorem]{Question}
\newtheorem{remark}[theorem]{Remark}
\theoremstyle{remark}
\numberwithin{equation}{section}
\begin{document}

\title[Polarized endomorphisms of log Calabi--Yau pairs]{Polarized endomorphisms of log Calabi--Yau pairs}

\author[J.~Moraga]{Joaqu\'in Moraga}
\address{UCLA Mathematics Department, Box 951555, Los Angeles, CA 90095-1555, USA
}
\email{jmoraga@math.ucla.edu}

\author[J.I.~Y\'a\~nez]{Jos\'e Ignacio Y\'a\~nez}
\address{UCLA Mathematics Department, Box 951555, Los Angeles, CA 90095-1555, USA
}
\email{yanez@math.ucla.edu}

\author[W. Yeong]{Wern Yeong}
\address{UCLA Mathematics Department, Box 951555, Los Angeles, CA 90095-1555, USA
}
\email{wyyeong@math.ucla.edu}

\thanks{The first author was partially supported by NSF research grant DMS-2443425.}

\subjclass[2020]{Primary 08A35, 14M25;
Secondary 14F35.}

\begin{abstract}
Let $(X,\Delta)$ be a dlt log Calabi--Yau pair admitting a polarized endomorphism.
We show that $(X,\Delta)$ is a finite quotient of a toric log Calabi--Yau fibration over an abelian variety.
We provide an example
which shows that the previous statement does not hold if we drop the dlt condition of $(X,\Delta)$ even if $X$ is a smooth variety.
Given a klt type variety $X$
and a log Calabi--Yau pair $(X,\Delta)$
admitting a polarized endomorphism,
we show that a suitable birational modification of $(X,\Delta)$ is a finite quotient of a toric log Calabi--Yau fibration over an abelian variety.
\end{abstract}

\maketitle

\setcounter{tocdepth}{1} 
\tableofcontents

\section{Introduction}

A {\em polarized endomorphism} is an endomorphism $f\colon X\rightarrow X$ for which $f^*A\sim mA$ for some ample divisor $A$ on $X$ and $m\geq 2$.
In other words, in a suitable projective embedding of $X$ the endomorphism has degree at least $2$.
It is expected that very few smooth varieties carry polarized endomorphisms.
Two natural examples of smooth projective varieties that admit
polarized endomorphisms
are toric varieties and abelian varieties.
Some finite quotients of the aforementioned
examples admit polarized endomorphisms as well.
It is a folklore conjecture that klt type varieties admitting polarized endomorphisms 
are finite quotients
of toric fibrations
over abelian varieties.
We emphasize that the condition on the singularities is essential (see~\cite[Example 6.7]{MYY24}).
At the same time, it is expected that every variety $X$ that admits a polarized endomorphism is of log Calabi--Yau type, i.e., $X$ admits a boundary divisor $\Delta$ for which $(X,\Delta)$ is log canonical and $K_X+\Delta\equiv 0$ (see~\cite[Conjecture 1.2]{BG17}).
In this article, we study the folklore conjecture for log Calabi--Yau pairs $(X,\Delta)$.
More precisely, we say that $(X,\Delta)$ admits a polarized endomorphism if
$f\colon X\rightarrow X$ is a polarized endomorphism with $f^*(K_X+\Delta)=K_X+\Delta$.
The previous being an equality of divisors, not just of classes of divisors.
The first theorem of this article is the following.

\begin{theorem}\label{introthm:pol-endo-log-CY}
Let $(X,\Delta)$ be a dlt log Calabi--Yau pair.
Assume that $(X,\Delta)$ admits a polarized endomorphism.
Then $(X,\Delta)$ is a finite quotient of a toric log Calabi--Yau fibration over an abelian variety.
\end{theorem}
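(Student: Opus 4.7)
The plan is to combine three ingredients: an Albanese-type morphism $a\colon X \to A$ producing the abelian base, a toric characterization of the general fiber of $a$ coming from the dlt log Calabi--Yau condition together with the polarized endomorphism, and a finite cover to globalize the toric fibration structure.

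First, I would tame the polarized endomorphism $f\colon X \to X$ by replacing it with a sufficiently divisible iterate. Since $f^{*}A \sim mA$ for $m\geq 2$ and $f^{*}(K_X+\Delta)=K_X+\Delta$ as divisors, the map $f$ has degree $m^{\dim X}$ and is \'etale in codimension one over $X\setminus \lfloor \Delta \rfloor$. After iteration I would arrange that $f$ fixes each prime component $\Delta_i$ of $\Delta$ and each irreducible log canonical center of $(X,\Delta)$ set-theoretically. The divisorial equality $f^{*}\Delta = \Delta$, combined with the ramification formula, then strongly constrains the coefficients of $\Delta$, forcing horizontal components of $f$-ramification to appear in $\Delta$ with coefficient one, consistently with the dlt hypothesis.

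Next, I would study the Albanese morphism $a\colon X \to A := \mathrm{Alb}(X)$. By functoriality $f$ descends to an endomorphism $\bar{f}\colon A \to A$, and the polarization condition forces $\bar f$ to be an isogeny of degree strictly greater than one. After Stein factorization and replacing $A$ by the image of $a$, the map $a$ is a surjection with connected fibers onto an abelian variety. A general fiber $F$ with its restricted boundary $\Delta_F$ is then a dlt log Calabi--Yau pair, a suitable power of $f$ preserves $F$ and restricts there to a polarized endomorphism, and by construction $F$ has trivial Albanese.

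The heart of the argument is to identify such a fiber $F$ as a finite quotient of a toric log Calabi--Yau. I would analyze the dual complex $\mathcal{D}(F, \Delta_F)$: the endomorphism $f|_F$ induces a piecewise-linear self-map of $\mathcal{D}(F,\Delta_F)$, and the multiplicativity under $f^{*}$ of top self-intersections of boundary components, together with the surjectivity of $f$ on the boundary strata, forces $\mathcal{D}(F,\Delta_F)$ to have the maximal possible dimension $\dim F - 1$. By the characterization of dlt log Calabi--Yau pairs of maximal dual complex dimension as crepant birational to toric log Calabi--Yau pairs (in the spirit of Koll\'ar--Xu), upgraded using the extra rigidity provided by the polarized endomorphism, $F$ is a finite quotient of a toric log Calabi--Yau. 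Finally, I would globalize: the $\bar f$-equivariance of the toric structure on fibers together with the isogeny structure on $A$ should combine, after pulling back by a suitable isogeny of $A$ and a finite cover of $X$, into a genuine toric log Calabi--Yau fibration over an abelian variety whose quotient recovers $(X,\Delta)$.

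The main obstacle I anticipate is the globalization step. Establishing that each individual fiber is toric up to finite quotient is one problem; assembling those fibers into an actual toric log Calabi--Yau fibration, rather than only a family of toric varieties, requires controlling the monodromy of the associated torus bundle over $A$ in an $\bar f$-equivariant fashion. The interplay between the permutations of log canonical centers by $f$, the ramification of the Albanese, and the finite covers needed to trivialize this monodromy is where the argument will be most delicate, and likely where the dlt hypothesis is essential in a way that fails under mere log canonicity, consistently with the counterexample promised in the abstract.
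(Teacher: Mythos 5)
Your proposal has several genuine gaps, the most serious of which is the treatment of the fibers. First, the claim that ``a suitable power of $f$ preserves $F$ and restricts there to a polarized endomorphism'' fails: if the abelian base is positive-dimensional, the induced endomorphism of $A$ is an isogeny of degree $>1$, so no iterate of $f$ fixes a general point of $A$, and even on the special fibers that are preserved the restriction need not be polarized. The paper explicitly works around this by studying, for a very general fiber, the Galois groups $G_i$ of the crepant finite maps $f^i\colon (Y_v,\Delta_v)\to (Y_{f_A^i(v)},\Delta_{f_A^i(v)})$ between \emph{different} fibers; the targets lie in a bounded family, and an infinite nested sequence of finite subgroups of $\mathrm{Aut}(F,\Delta_F)$ with bounded quotients is what forces a full-rank torus in the automorphism group (Theorem~\ref{thm:toric-bounded-quot}). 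Second, your proposed toric criterion for the fiber is not valid: a dlt log Calabi--Yau pair whose dual complex has maximal dimension $\dim F-1$ need not be toric, nor a finite quotient of a toric pair; Koll\'ar--Xu only constrains the topology of $\mathcal{D}(F,\Delta_F)$, and the phrase ``upgraded using the extra rigidity provided by the polarized endomorphism'' is exactly the missing argument. What actually characterizes toric here is a complexity-type count (number of boundary components versus $\dim+\rho$), which is also how the paper globalizes.

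Two further structural issues. You take the Albanese of $X$ itself, but the abelian base in the theorem lives on a finite cover: $\mathrm{Alb}(X)$ can be a point (e.g.\ $X$ rationally connected) while the correct abelian base is positive-dimensional upstairs. The paper first proves $\Delta$ has standard coefficients (Lemma~\ref{lem:stand-coeff}), passes to the index one cover of $K_X+\Delta$ to make the boundary reduced, lifts an iterate of $f$ to that cover, and only then invokes Yoshikawa's structure theorem to get the fibration to an abelian variety. Finally, the globalization step you flag as the main obstacle is left without a mechanism; the paper's mechanism is the relative complexity criterion of~\cite{MS21} (Proposition~\ref{prop:from-weak-to-toric}), which reduces to showing $\phi$ has no degenerate divisors, and that in turn is proved dynamically: degenerate divisors would pull back to infinitely many degenerate divisors because an int-amplified endomorphism of an abelian variety has no fixed proper subvariety (Lemmas~\ref{lem:degenerate-divisors-pullback}--\ref{lem:int-amplified-abelian-no-fixed-sub}), contradicting finiteness (Lemma~\ref{lem:finite-deg-div}). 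Without these ingredients the proof does not close.
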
 

A toric log Calabi--Yau fibration is a fibration from a log pair for which all fibers are toric varieties with their torus invariant divisor (see Definition~\ref{def:toric-log-CY-fib}).
Before discussing the main tools involved in the proof of Theorem~\ref{introthm:pol-endo-log-CY}, we recall some results in the literature about varieties admitting polarized endomorphisms.
The folklore conjecture is known in several cases: for smooth surfaces and almost all singular surfaces~\cite{Nak02, Nak21}, for smooth Fano $3$-folds~\cite{MZZ22}, for homogeneous varieties~\cite{PS89}, and for klt Calabi--Yau varieties~\cite{Men17}. 
The statement of Theorem~\ref{introthm:pol-endo-log-CY} is known for klt log Calabi--Yau pairs due to the work of Yoshikawa (see~\cite{Yos21}). In~\cite{KT23}, Kawakami and Totaro proved that a variety that admits a polarized endomorphism satisfies Bott vanishing.
In particular, a smooth Fano variety that
admits a polarized endomorphism is rigid (see~\cite[Remark 2.3]{KT23}).
A lot of work related to the folklore conjecture has been done under the hypothesis of the existence of a {\em completely invariant divisor}.
A completely invariant divisor for an endomorphism $f\colon X\rightarrow X$
is a divisor $\Delta$ on $X$ for which
$f^{-1}(\Delta)=\Delta$.
In~\cite{HN11}, Hwang and Nakayama proved that a Fano manifold $X$ of Picard rank one admitting a polarized endomorphism $f$ that is \'etale outside a completely invariant divisor
must be a projective space.
In~\cite{MZ20}, Meng and Zhang proved that if $X$
is a smooth rationally connected variety
admitting a polarized endomorphism that
is \'etale outside a completely invariant divisor $\Delta$, then $(X,\Delta)$ is toric.
In~\cite{MZ23}, Meng and Zhong proved that a pair $(X,\Delta)$, consisting of a smooth rationally connected variety and a reduced divisor, is a toric pair if and only if $X$ admits a 
polarized endomorphism that is \'etale outside $\Delta$.
In the three previous results~\cite{HN11,MZ20,MZ23},
the arguments used rely on the smoothness condition.
Note that in Theorem~\ref{introthm:pol-endo-log-CY}, we have an invariant divisor $\Delta$, but $f$ may not be \'etale on the complement of $\Delta$ (see~\cite[Example 6.2]{MYY24}).
In~\cite[Theorem 1]{MYY24}, the authors proved a version of Theorem~\ref{introthm:pol-endo-log-CY} for $X$ a Fano type variety and $\Delta$ a reduced divisor.
Both conditions are necessary for the proof provided in~\cite{MYY24}. The reducedness of $\Delta$ is used to approximate it using divisors with standard coefficients, while
the Fano type condition is used via the Jordan property for fundamental groups (see, e.g.,~\cite{BFMS20,Mor21}). 

To prove Theorem \ref{introthm:pol-endo-log-CY} we use that one can lift a surjective endomorphism to a small $\qq$-factorial modification.

\begin{theorem}[cf. Theorem \ref{thm:lift-to-Q-fact}]
Let $X$ be a projective klt type variety and $f\colon X \to X$ a surjective endomorphism.
Then, after possibly replacing $f$ with a higher iteration, there exists an $f$-equivariant $\qq$-factorial klt model.
\label{introthm:lift-to-Q-fact}
\end{theorem}

Such model depends on $f$. In particular, one can drop the $\qq$-factorial condition from \cite[Theorem 1.7]{MZ22} (see also \cite{MZ24}).

In Example~\ref{ex:not-pdlt}, we show that Theorem~\ref{introthm:pol-endo-log-CY} is not valid
if we drop the dlt condition 
for the log Calabi--Yau pair $(X,\Delta)$ even if $X$ is a smooth variety.
In Definition~\ref{def:pdlt}, we introduce the concept of {\em partially dlt} singularities.
We abbreviate this concept as pdlt.
Roughly speaking, a pair $(X,\Delta)$ is pdlt if
every log canonical center of $(X,\Delta)$ is contained in the support of $\lfloor \Delta\rfloor$.
Theorem~\ref{thm:main-pdlt} shows that Theorem~\ref{introthm:pol-endo-log-CY} is still valid when $(X,\Delta)$ is a pdlt 
log Calabi--Yau pair admitting an int-amplified endomorphism.
Let us point out that the log Calabi--Yau pairs considered in~\cite[Theorem 1]{MYY24} are pdlt.
Thus, Theorem~\ref{thm:main-pdlt} generalizes the previous work by the authors
to the setting in which the ambient variety $X$ is
not necessarily a Fano variety.

We show that some iteration of a polarized endomorphism 
of a log Calabi--Yau pair lifts to a suitable pdlt modification, i.e., it lifts to a higher birational model that has pdlt singularities (see Definition~\ref{def:pdlt-mod}).

\begin{theorem}[cf. Theorem~\ref{thm:lifting-pdlt}]\label{introthm:lifting-pol}
Let $X$ be a klt type variety
and let $(X,\Delta)$ be a log Calabi--Yau pair admitting a polarized endomorphism $f$.
Then, some iteration of $f$ lifts to a pdlt modification of $(X,\Delta)$.
\end{theorem}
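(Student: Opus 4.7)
The plan is to construct a pdlt modification $\pi\colon Y\to X$ of $(X,\Delta)$ that extracts precisely the divisorial lc places of $(X,\Delta)$ whose centers on $X$ are not contained in $\Supp(\lfloor\Delta\rfloor)$, and then to argue that a suitable iterate of $f$ descends through $\pi$ to a morphism $Y\to Y$.

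First, I would isolate the finite set $\mathcal{E}$ of prime divisors $E$ over $X$ satisfying $a_E(X,\Delta)=0$ and $c_X(E)\not\subset\Supp(\lfloor\Delta\rfloor)$. Finiteness is the standard input behind the existence of (p)dlt modifications for klt type pairs: on a log resolution of $(X,\Delta)$ only finitely many such divisors occur, and a suitable crepant MMP extracting exactly $\mathcal{E}$ produces a projective birational $\pi\colon Y\to X$ with $(Y,\Delta_Y)$ pdlt and $K_Y+\Delta_Y=\pi^*(K_X+\Delta)$. Next, using the identity $f^*(K_X+\Delta)=K_X+\Delta$ together with $f^{-1}\Supp(\lfloor\Delta\rfloor)=\Supp(\lfloor\Delta\rfloor)$, I would show that $f$ induces a bijection of $\mathcal{E}$: the equality of divisors (not merely classes) forces preservation of log discrepancies under the valuation-theoretic action of $f$, while preservation of the floor forces preservation of the center condition; the induced self-map is surjective via the extension of valuations above each $v\in\mathcal{E}$, and hence bijective by finiteness. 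Consequently, some iterate $f^n$ fixes every element of $\mathcal{E}$ individually.

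Finally, I would show that the rational map $g\colon Y\dashrightarrow Y$ determined by $\pi\circ g=f^n\circ\pi$ extends to a morphism. At a generic point of each $\pi$-exceptional divisor $E\in\mathcal{E}$, the fact that $f^n$ fixes $E$ as a valuation forces $g$ to extend regularly across $E$; hence $g$ is defined outside a codimension-two locus of $Y$, and the normality and projectivity of $Y$ together with $g^*\pi^*A\sim m^n\pi^*A$ for a polarization $A$ of $X$ let one promote $g$ to a morphism via a Zariski Main Theorem argument, for example by realizing $g$ on the normalized pullback $Y\times_{X,f^n}X$. The main obstacle I anticipate is precisely this last step: ruling out indeterminacy of $g$ at codimension-two intersections of $\pi$-exceptional divisors with each other and with the strict transform of $\Delta$. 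The fact that $f^n$ fixes each $E\in\mathcal{E}$ individually---rather than merely permuting the set---is what makes a clean extension argument available, and I expect the proof of Theorem~\ref{thm:lifting-pdlt} to invest its technical work there.
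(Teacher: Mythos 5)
Your first step contains a genuine gap that the rest of the argument inherits. The set $\mathcal{E}$ of \emph{all} divisorial log canonical places of $(X,\Delta)$ whose centers are not contained in $\Supp(\lfloor\Delta\rfloor)$ is not finite in general: over a single non-pdlt center $Z$ the log canonical places form the subcomplex $\mathcal{D}(X,\Delta;Z)$ of the dual complex, which can be positive-dimensional, and every rational point of it is a divisorial lc place with center contained in $Z$. What is finite is only the set of lc places appearing on one fixed log resolution or dlt model, but the valuation-theoretic action of $f$ has no reason to preserve that particular finite subset: the induced self-map $\mathcal{D}(f)$ of Theorem~\ref{thm:def-D(f)} is a piecewise linear map of the dual complex that can send a vertex of your chosen triangulation to a rational point that is not a vertex. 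So ``bijective by finiteness'' and ``some iterate fixes every element of $\mathcal{E}$ individually'' both fail, and with them the construction of a model to which $f^n$ descends.

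The paper circumvents exactly this. It first proves that $\mathcal{D}(X,\Delta;Z)$ is \emph{collapsible} (Lemma~\ref{lem:collapsible-local}, via running a suitable MMP that contracts the extracted places), so that the Lefschetz fixed point theorem applied to the PL rational map $\mathcal{D}(f)$ yields a single \emph{rational} fixed point $v_E$, i.e.\ one lc place $E$ over $Z$ fixed by $f^n$; it then needs the separate fact that $\mathcal{D}(f)$ is a bijection (Theorem~\ref{thm:D(f)-bijection}, which rests on the Jordan-type bound for finite groups acting on dual complexes and on $f$ being int-amplified) to guarantee that the normalized fiber product over the extraction of $E$ again extracts only $E$. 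The descent to a genuine endomorphism of the extracted model is then handled not by a Zariski Main Theorem extension of a rational map but by finiteness of birational models coming from the relative Mori Dream Space structure of $Y'\to X$, forcing $(Y_m,\Delta_{Y_m})\simeq(Y_1,\Delta_{Y_1})$ for $m\gg 0$. Finally one extracts one fixed place at a time and inducts on the number of non-pdlt centers. Without a mechanism to produce a \emph{fixed} lc place (collapsibility plus Lefschetz) and to control the fiber product (bijectivity of $\mathcal{D}(f)$), your outline cannot be completed as stated.
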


Putting Theorem~\ref{introthm:lifting-pol} and Theorem~\ref{thm:main-pdlt} together, we conclude the following.

\begin{theorem}\label{introthm:3}
Let $X$ be a klt type variety and let $(X,\Delta)$ be a log Calabi--Yau pair admitting a polarized endomorphism $f$.
Then, the pair $(X,\Delta)$ admits a pdlt modification that is a finite quotient
of a toric log Calabi--Yau fibration
over an abelian variety.
\end{theorem}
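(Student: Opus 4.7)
The strategy is to chain Theorems~\ref{introthm:lifting-pol} and~\ref{thm:main-pdlt}, with a technical bridge ensuring that the lifted endomorphism satisfies the hypotheses of the latter.

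First, I would apply Theorem~\ref{introthm:lifting-pol} to $(X,\Delta)$ and $f$. This furnishes a pdlt modification $\pi\colon (Y,\Delta_{Y})\to (X,\Delta)$ together with an endomorphism $g\colon Y\to Y$ lifting some iterate $f^{k}$, so that $\pi\circ g = f^{k}\circ\pi$ and $g^{*}(K_{Y}+\Delta_{Y})=K_{Y}+\Delta_{Y}$. In particular $(Y,\Delta_{Y})$ is a pdlt log Calabi--Yau pair carrying an endomorphism that preserves the log canonical divisor on the nose.

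Next, I would promote $g$ to an int-amplified endomorphism, in order to be able to feed it to Theorem~\ref{thm:main-pdlt}. Since $f$ is polarized, write $f^{*}A\sim mA$ for some ample divisor $A$ on $X$ and some integer $m\ge 2$, so that $(f^{k})^{*}A\sim m^{k}A$. Pulling back via $\pi$ yields $g^{*}(\pi^{*}A)\sim m^{k}\pi^{*}A$, which exhibits $g$ as quasi-polarized by the big and nef class $\pi^{*}A$. Using that the $\pi$-exceptional locus is $g$-invariant (because $g$ lifts an endomorphism on $X$), one controls the action of $g^{*}$ on the subspace of $N^{1}(Y)_{\mathbb{R}}$ spanned by exceptional divisors and deduces that every eigenvalue of $g^{*}$ on $N^{1}(Y)_{\mathbb{R}}$ has absolute value strictly greater than $1$, which is equivalent to $g$ being int-amplified.

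Finally, I would apply Theorem~\ref{thm:main-pdlt} to the pdlt log Calabi--Yau pair $(Y,\Delta_{Y})$ equipped with the int-amplified endomorphism $g$. This immediately yields that $(Y,\Delta_{Y})$ is a finite quotient of a toric log Calabi--Yau fibration over an abelian variety. Since $\pi\colon (Y,\Delta_{Y})\to (X,\Delta)$ is by construction a pdlt modification, this is the required modification of $(X,\Delta)$ with the desired structure.

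The main obstacle I anticipate is the middle step, namely upgrading the quasi-polarization by the big and nef class $\pi^{*}A$ to full int-amplification on $Y$. The delicate point is excluding eigenvalues of $g^{*}$ of modulus $1$ on classes supported on the $\pi$-exceptional locus, where the reference polarization $\pi^{*}A$ degenerates. This should be handled by exploiting the $g$-invariance of the exceptional locus together with an eigenvalue analysis of $g^{*}$ on the short exact sequence of N\'eron--Severi groups relating $Y$ to $X$ along $\pi$.
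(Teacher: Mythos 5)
Your proposal is correct and follows essentially the same route as the paper: apply Theorem~\ref{introthm:lifting-pol} (in its full form, Theorem~\ref{thm:lifting-pdlt}) and then Theorem~\ref{thm:main-pdlt}. The middle step you flag as the main obstacle is not actually needed: Theorem~\ref{thm:lifting-pdlt} already asserts that the lifted endomorphism $g$ is int-amplified, and its proof establishes this in one line via the bigness criterion of \cite[Theorem 3.3]{Men17} --- namely $g^*\pi^*A - \pi^*A = \pi^*\bigl((f^k)^*A - A\bigr)$ is the pullback of an ample divisor under a birational morphism, hence big, and $\pi^*A$ is big --- so no eigenvalue analysis of $g^*$ on $N^1(Y)_{\mathbb{R}}$ or handling of exceptional classes is required.
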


Example~\ref{ex:not-pdlt} shows
that in general both kind
of modifications:
the pdlt modification
and the finite cover
are needed in the statement 
of Theorem~\ref{introthm:3}.
We point out that in the statement of Theorem~\ref{introthm:3}, we may need to take a pdlt modification even 
if $X$ is a toric variety itself.
Indeed, the boundary divisor $\Delta$ may not be torus invariant, so the structure morphism of $X$ is not a toric log Calabi--Yau fibration.

\subsection{Sketch of the proofs}
In this subsection, we provide a sketch of the proof of the two main theorems.

First, we explain the ideas that lead to 
Theorem~\ref{introthm:pol-endo-log-CY}
and 
Theorem~\ref{thm:main-pdlt}.
Let $(X,\Delta)$ be a pdlt log Calabi--Yau pair that admits a polarized endomorphism.
In Lemma~\ref{lem:stand-coeff}, we prove that $\Delta$ must have standard coefficients.
Thus, to turn $\Delta$ into a reduced divisor, 
we take the index one cover of the torsion divisor $K_X+\Delta$.
The advantage of pdlt singularities
is that they are preserved under taking index one covers (see Lemma~\ref{lem:finite-cover-pdlt}).
In particular, the index one cover will have klt type singularities.
Using~\cite[Corollary 3.2]{MYY24}, we lift the polarized endomorphism to this index one cover.
Using Lemma~\ref{lem:passing-to-small-Q-fact} and Theorem~\ref{thm:lift-to-Q-fact}, we show that it suffices to prove the theorem for a small $\qq$-factorialization of the index one cover of $K_X+\Delta$.

From now on, we may assume that $X$ is $\qq$-factorial, $(X,\Delta)$ is pdlt, and $K_X+\Delta\sim 0$.
Using the work of Yoshikawa~\cite[Theorem 6.6]{Yos21}, 
we obtain a Fano type fibration $\phi\colon (X,\Delta) \rightarrow A$
to an abelian variety $A$
that commutes with the polarized endomorphism $f$.
The endomorphism $f$ may not induce a polarized endomorphism on general fibers. 
Thus, we cannot directly apply~\cite[Theorem 1]{MYY24} to conclude that a general fiber $(F,\Delta_F)$ is a toric log Calabi--Yau pair.
Instead, we use the Jordan property of
the fundamental group of $(F,\Delta_F)$ to conclude that every iteration of
$f$ induces Galois crepant finite quotients
between the very general fibers of $(X,\Delta)\rightarrow A$.
Thus, for a very general fiber $(F,\Delta_F)$, 
we have an infinite nested sequence $\{G_i\}_{i\in \zz_{\geq 0}}$ of finite subgroups such that $G_i\leqslant {\rm Aut}(F,\Delta)$
and the quotients $(F/G_i,\Delta/G_i)$ belong to a bounded family. Indeed, the quotients are fibers of $\phi$.
In Section~\ref{sec:bounded-finite-quot}, we study bounded finite quotients of log Calabi--Yau pairs, to show that $(F,\Delta_F)$ must be toric, after possibly taking a finite cover (see Theorem~\ref{thm:toric-bounded-quot}).
By Lemma~\ref{lem:toric-vg-g}, the general fiber of $\phi$ is also a toric log Calabi--Yau pair.

Thus, we have a log Calabi--Yau fibration
$\phi\colon (X,\Delta)\rightarrow A$
for which the general fiber $(F,\Delta_F)$ is a toric log Calabi--Yau pair.
Finally, we are left with the problem of showing that $\phi\colon (X,\Delta)\rightarrow A$ is formally toric over any closed point of the base.
Using the relative complexity (see~\cite[Theorem 1]{MS21}) it suffices to show that $\rho(X/A)=\rho(F)$.
In order to prove this, we need to argue that $X\rightarrow A$ has no degenerate divisors.
For this purpose, we show that there is an induced polarized endomorphism $\phi_A\colon A\rightarrow A$
and that every degenerate divisor of $\phi$ maps onto a fixed subvariety for $\phi_A$ (see Lemma~\ref{lem:degenerate-divisors-pullback}).
However, polarized endomorphisms on abelian varieties have no fixed subvarieties (see, e.g.,~\cite[Lemma 4.7]{MZ18}).
This finishes the sketch of the proof in the case of polarized endomorphisms.
The case of int-amplified endomorphisms brings some more technical difficulties.

Now, we turn to explain the ideas behind the proof of Theorem~\ref{introthm:lifting-pol}.
Let $f\colon (X,\Delta)\rightarrow (X,\Delta)$ be a polarized endomorphism of a log Calabi--Yau pair $(X,\Delta)$.
The finite map $f$ may not lift to a dlt modification of $(X,\Delta)$.
However, given a dlt modification $(Y,\Delta_Y)$ of $(X,\Delta)$, we may find a qdlt modification $(W,\Delta_W)$ of $(X,\Delta)$ such that $f$ lifts to
$g\colon (W,\Delta_W) \rightarrow (Y,\Delta_Y)$ (see Theorem~\ref{thm:def-D(f)}). 
This lifting allows us to construct a continuous map $\mathcal{D}(f) \colon \mathcal{D}(X,\Delta) \rightarrow \mathcal{D}(X,\Delta)$ between pseudomanifolds that describes how $f$ acts on the log canonical places of $(X,\Delta)$. Given a log canonical center $Z$ of $(X,\Delta)$, at which the pair is not pdlt, our aim is to extract a log canonical place $E$ with center $Z$. However, to lift the polarized endomorphism $f$ to a model extracting $E$, we need to show that $E$ is fixed by $f$, or equivalently, that the vertex $v_E \in \mathcal{D}(X,\Delta)$ is fixed by $\mathcal{D}(f)$.
In order to show this, 
we show that $\mathcal{D}(X,\Delta)$ is locally collapsible over $X$, i.e., 
the sub-complex of $\mathcal{D}(X,\Delta)$ consisting of log canonical places mapping to the same center $Z$ in $X$ is collapsible (see Lemma~\ref{lem:collapsible-local}).
In Theorem~\ref{thm:D(f)-bijection}, we show that $\mathcal{D}(f)$ is a bijection in our setting.
Thus, using the local collapsibility and the bijectivity, we show that $f$ has a fixed log canonical place $E$ over every log canonical center $Z$ of $(X,\Delta)$.
By inductively extracting log canonical places over non-pdlt centers of $(X,\Delta)$, we lift the polarized endomorphism to a pdlt modification.

Theorem~\ref{introthm:3} is a straightforward application of Theorem~\ref{introthm:lifting-pol} 
and Theorem~\ref{thm:main-pdlt}.

\subsection*{Acknowledgements}
The authors would like to thank Sheng Meng, Noboru Nakayama, Burt Totaro, and Guolei Zhong for many useful discussions and comments. 

\section{Preliminaries}

In this section, we prove some preliminary results about
toric fibrations, singularities, log Calabi--Yau pairs, and polarized endomorphisms.
We work over an uncountable algebraically closed field $\kk$ of characteristic zero.
The set of {\em standard coefficients} is the set $\{1-\frac{1}{n} \mid n \in \zz_{>0}\}\cup\{1\}$. 
We say that a pair $(X,\Delta)$ has {\em standard coefficients} if the coefficients
of $\Delta$ are in the set of standard coefficients.
A sequence of groups $\{G_i\}_{i\in \zz}$ is said to be {\em nested} if $G_i\leq G_j$ for $i\leq j$. The {\em multiplicity} of an algebraic variety $X$ at a closed point $x\in X$ is denoted by $\mu(X;x)$.

\subsection{Singularities}
In this subsection, we recall the concepts of singularities of the Minimal Model Program.
Further, we introduce the concept of partially dlt singularities.

\begin{definition}
{\em 
Let $(X,\Delta)$ be a log pair, i.e. $X$ is a normal variety and $\Delta$ is a $\qq$-divisor such that $K_X+\Delta$ is $\qq$-Cartier.
Let $\pi\colon Y\rightarrow X$ be a projective birational morphism from a normal variety $Y$,  and let $E\subset Y$ be a prime divisor.
The {\em log discrepancy}, $a_E(X,\Delta)$, of $(X,\Delta)$ at $E$ is 
\[
1-{\rm coeff}_E(\Delta_Y) 
\text{ where } 
K_Y+\Delta_Y = \pi^*(K_X+\Delta). 
\]
We say that $(X,\Delta)$ is {\em Kawamata log terminal} (klt for short) if all its log discrepancies are positive.
We say that $(X,\Delta)$ is {\em log canonical} (lc for short)
if all its log discrepancies are non-negative.
A log canonical pair $(X,\Delta)$ is called a {\em log Calabi--Yau pair} if $K_X+\Delta \sim_\qq 0.$
We say that $X$ is of {\em klt type} if there exists a boundary $\Delta$ on $X$ such that $(X,\Delta)$ is klt.
}
\end{definition}

\begin{definition}
{\em 
Let $(X,\Delta)$ be a log canonical pair.
A {\em log canonical place} of $(X,\Delta)$
is a divisor $E$ over $X$ for which 
$a_E(X,\Delta)=0$.
A {\em log canonical center} of $(X,\Delta)$
is the image on $X$ of a log canonical place. 
We often abbreviate log canonical center by lcc.
}
\end{definition} 

\begin{definition}\label{def:pdlt-sing}
{\em  
Let $(X,\Delta)$ be a log canonical pair.
We say that $(X,\Delta)$ is {\em divisorially log terminal} (or dlt) if there exists an open $U\subseteq X$ satisfying the following conditions:
\begin{itemize}
\item the pair $(X,\Delta)$ is simple normal crossing on $U$, and 
\item every log canonical center of $(X,\Delta)$
intersects $U$
and is a strata of $\lfloor \Delta \rfloor$.
\end{itemize}
}
\end{definition}

\begin{definition}\label{def:qdlt}
{\em 
Let $(X,\Delta)$ be a log canonical pair, and 
let $Z$ be a log canonical center of $(X,\Delta)$.
We say that $(X,\Delta)$ is {\em quotient dlt} (or qdlt for short) at $Z$ if the following condition holds:
There is a simple normal crossing pair
$(Y,B_1+\dots+B_k)$ and a finite abelian group $A$ acting on $Y$, preserving each of the divisors $B_i$'s, such that 
$(X,\Delta)\simeq (Y,B_1+\dots+B_k)/A$ holds on a neighborhood of the generic point of $Z$.
We say that a pair $(X,\Delta)$ is {\em qdlt} if it is qdlt at every log canonical center of $(X,\Delta)$.
}
\end{definition} 

\begin{definition}\label{def:pdlt}
{\em 
Let $(X,\Delta)$ be a log canonical pair.
We say that $(X,\Delta)$ is {\em partially divisorially log terminal} (or pdlt) if $(X,\Delta-\epsilon S)$ is klt for every $\epsilon$ small enough
and $S$ an effective divisor with $\supp S = \supp \lfloor \Delta \rfloor$.
A {\em non-pdlt center} is a log canonical center of $(X,\Delta)$ which is not contained in the support of $\lfloor \Delta \rfloor$.
}
\end{definition} 

It is clear from the definition that a $\qq$-factorial dlt pair $(X,\Delta)$ is pdlt 
by taking $S=\lfloor \Delta\rfloor$.
Furthermore, a pdlt pair is of klt type
by definition. 

\begin{definition}\label{def:pdlt-mod}
{\em 
Let $(X,\Delta)$ be a log canonical pair.
We say that a projective birational morphism $p\colon Y\rightarrow X$ is a {\em pdlt modification} of $(X,\Delta)$ if it only extracts log canonical places of $(X,\Delta)$ and the pair $(Y,\Delta_Y)$ is pdlt, where $K_Y+\Delta_Y=p^*(K_X+\Delta)$.
}
\end{definition}

As a dlt pair is pdlt, 
every log canonical pair admits a pdlt modification (see, e.g.,~\cite[Theorem 3.1]{KK10}).
However, there are pdlt modifications
that are not dlt.
The following lemma is straightforward from the definition.

\begin{lemma}\label{lem:pdlt-vs-klt-type}
Let $X$ be a $\qq$-factorial variety
and $(X,\Delta)$ be a lc pair. 
Then, $(X,\Delta)$ is pdlt if and only if
$(X,\Delta-\epsilon \lfloor \Delta\rfloor)$ is klt for every $\epsilon>0$ small enough.
\end{lemma}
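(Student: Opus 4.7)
The plan is to verify both implications via a straightforward linear comparison between $S$ and $\lfloor \Delta \rfloor$, relying on the $\qq$-factoriality of $X$ to guarantee that every divisor involved is $\qq$-Cartier so that all the pairs we form are well-defined.

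First I would dispose of the forward implication. If $(X,\Delta)$ is pdlt in the sense of Definition~\ref{def:pdlt}, then applying the definition to the specific choice $S = \lfloor \Delta \rfloor$, which is effective and trivially has $\supp S = \supp \lfloor \Delta \rfloor$, immediately yields that $(X, \Delta - \epsilon \lfloor \Delta \rfloor)$ is klt for every sufficiently small $\epsilon > 0$. In the degenerate case $\lfloor \Delta \rfloor = 0$ both conditions reduce to $(X,\Delta)$ being klt.

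For the reverse implication, suppose $(X, \Delta - \epsilon \lfloor \Delta \rfloor)$ is klt for every sufficiently small $\epsilon > 0$, and let $S$ be any effective divisor with $\supp S = \supp \lfloor \Delta \rfloor$. I would write $S = \sum_i a_i S_i$ and $\lfloor \Delta \rfloor = \sum_i b_i S_i$ indexed by the common finite set of prime components $S_i$ (this is what the equality of supports gives), with all $a_i, b_i > 0$. The positive constant $c = \max_i(b_i/a_i)$ then satisfies $\lfloor \Delta \rfloor \leq c \cdot S$ componentwise, so for every $\epsilon > 0$ we have $(\epsilon/c)\lfloor \Delta \rfloor \leq \epsilon S$, and hence $\Delta - \epsilon S \leq \Delta - (\epsilon/c)\lfloor \Delta \rfloor$. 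Since decreasing the boundary by an effective $\qq$-Cartier divisor only increases log discrepancies (as one sees by pulling back to a common log resolution, where the pullback of an effective $\qq$-Cartier divisor is effective), and $(X, \Delta - (\epsilon/c)\lfloor \Delta \rfloor)$ is klt whenever $\epsilon/c$, and thus $\epsilon$, is small enough, it follows that $(X, \Delta - \epsilon S)$ is klt as well. This is exactly the pdlt condition of Definition~\ref{def:pdlt}.

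No real obstacle arises; the only points worth flagging are that $\qq$-factoriality of $X$ is essential in order for the subtractions $\Delta - \epsilon S$ and $\Delta - \epsilon \lfloor \Delta \rfloor$ to yield legitimate log pairs, and that the hypothesis $\supp S = \supp \lfloor \Delta \rfloor$ is precisely what supplies the uniform comparison constant $c$ that converts control over one into control over the other.
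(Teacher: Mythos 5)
Your proof is correct; the paper itself gives no argument here, dismissing the lemma as ``straightforward from the definition,'' and your support-comparison via the constant $c=\max_i(b_i/a_i)$ (together with the observation that subtracting an effective $\qq$-Cartier divisor from the boundary only raises log discrepancies) is exactly the elementary content being elided. One small remark: the paper's later usage of Definition~\ref{def:pdlt} (e.g.\ in the proofs of Lemmas~\ref{lem:finite-cover-pdlt} and~\ref{lem:pdlt-descends-finite}, and in the remark that dlt implies pdlt ``by taking $S=\lfloor\Delta\rfloor$'') reads the quantifier on $S$ existentially rather than universally as you do, which swaps which implication is the trivial one; but the symmetric comparison constant $\min_i(b_i/a_i)$ handles the other direction in the same way, so your argument captures the substance under either reading and in fact shows the two readings are equivalent.
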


Now, we turn to prove the following lemma regarding finite covers of pdlt pairs.

\begin{lemma}\label{lem:finite-cover-pdlt}
Let $(X,\Delta)$ be a pdlt pair.
Let $f\colon Y\rightarrow X$ be a finite morphism
such that $K_Y+\Delta_Y=f^*(K_X+\Delta)$. 
Assume that $\Delta_Y\geq 0$. 
Then, the pair $(Y,\Delta_Y)$ is pdlt.
\end{lemma}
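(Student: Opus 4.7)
My plan is to use the pdlt hypothesis on $(X,\Delta)$ with the canonical choice of test divisor $\lfloor \Delta\rfloor$, pull back the resulting klt perturbation under $f$, and then transfer from this particular test divisor to an arbitrary effective divisor $S_Y$ on $Y$ with $\supp S_Y=\supp \lfloor \Delta_Y\rfloor$.

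The first key step is to identify $\supp \lfloor \Delta_Y\rfloor$. Writing $K_Y+\Delta_Y = f^*(K_X+\Delta)$, for a prime divisor $E\subset Y$ with image $D:=f(E)$ and ramification index $e_E$ one computes $\coeff_E(\Delta_Y)=e_E\cdot\coeff_D(\Delta)-(e_E-1)$, so $\coeff_E(\Delta_Y)=1$ precisely when $\coeff_D(\Delta)=1$. Thus
\[
\supp \lfloor \Delta_Y\rfloor = f^{-1}\bigl(\supp \lfloor \Delta\rfloor\bigr) = \supp\bigl(f^*\lfloor \Delta\rfloor\bigr).
\]
Setting $L_Y:=f^*\lfloor \Delta\rfloor$, the pdlt assumption ensures that $(X,\Delta-\epsilon\lfloor \Delta\rfloor)$ is klt for every sufficiently small $\epsilon>0$, and pulling back yields
\[
K_Y+(\Delta_Y-\epsilon L_Y)=f^*(K_X+\Delta-\epsilon\lfloor \Delta\rfloor),
\]
with $\Delta_Y-\epsilon L_Y\geq 0$ for such $\epsilon$ because every component of $L_Y$ has coefficient $1$ in $\Delta_Y$. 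By the standard descent of klt singularities through finite surjective morphisms (see, e.g., Koll\'ar--Mori, Proposition 5.20), $(Y,\Delta_Y-\epsilon L_Y)$ is klt.

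To conclude, let $S_Y$ be any effective divisor on $Y$ with $\supp S_Y=\supp \lfloor \Delta_Y\rfloor=\supp L_Y$. Since $L_Y$ and $S_Y$ are supported on the same finite set of prime divisors, there exists a constant $c>0$ with $L_Y\leq c\, S_Y$. Hence
\[
0\leq \Delta_Y-\epsilon c\, S_Y \leq \Delta_Y-\epsilon L_Y
\]
for $\epsilon$ small enough, and because klt is preserved when the boundary is replaced by a smaller effective one (log discrepancies only increase), the pair $(Y,\Delta_Y-\epsilon c\, S_Y)$ is klt. Taking $\delta:=\epsilon c$ gives the pdlt property for $(Y,\Delta_Y)$. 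The only real obstacle is the support identification $\supp L_Y=\supp \lfloor \Delta_Y\rfloor$; once that ramification bookkeeping is in place, the remainder uses only the standard behavior of klt pairs under finite covers and under shrinking the boundary.
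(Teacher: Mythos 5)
Your overall strategy is the same as the paper's: use Riemann--Hurwitz to identify $\supp\lfloor\Delta_Y\rfloor$ with $f^{-1}(\supp\lfloor\Delta\rfloor)$, pull back a klt perturbation of $(X,\Delta)$, and invoke descent of klt singularities under finite morphisms. The support identification and the pullback step are correct.

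The gap is in your choice of test divisor on $X$. You assert that the pdlt hypothesis ``ensures that $(X,\Delta-\epsilon\lfloor\Delta\rfloor)$ is klt,'' but the lemma does not assume $X$ is $\qq$-factorial, and $K_X+\Delta-\epsilon\lfloor\Delta\rfloor$ need not be $\qq$-Cartier, so this pair need not even be a log pair. This is exactly why the paper's Lemma~\ref{lem:pdlt-vs-klt-type} (pdlt $\iff$ $(X,\Delta-\epsilon\lfloor\Delta\rfloor)$ klt) is stated only for $\qq$-factorial $X$. Definition~\ref{def:pdlt} only furnishes \emph{some} effective divisor $S$ with $\supp S=\supp\lfloor\Delta\rfloor$ for which $(X,\Delta-\epsilon S)$ is klt, and in general $S$ cannot be taken to be $\lfloor\Delta\rfloor$. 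The fix is immediate and is what the paper does: run your argument with $L_Y:=f^*S$ for that $S$; then $\supp f^*S=f^{-1}(\supp\lfloor\Delta\rfloor)=\supp\lfloor\Delta_Y\rfloor$ by your ramification computation, $\Delta_Y-\epsilon f^*S\geq 0$ for small $\epsilon$, and klt descends under $f$, exhibiting a valid witness for the pdlt property of $(Y,\Delta_Y)$. Your closing step transferring to an \emph{arbitrary} $S_Y$ with the right support has the same defect ($K_Y+\Delta_Y-\epsilon c\,S_Y$ need not be $\qq$-Cartier, so ``shrinking the boundary preserves klt'' is not applicable as stated); but that step is unnecessary, since exhibiting one admissible $S_Y$ is all the definition, as used throughout the paper, requires.
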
 

\begin{proof}
If $P$ is a prime component of $\lfloor \Delta \rfloor$, then 
every component $f^{-1}(P)$ appears  in $\lfloor \Delta_Y\rfloor$ by Riemann-Hurwitz.
Vice-versa, by Riemann-Hurwitz, if $Q$ is a prime component of $\lfloor \Delta_Y\rfloor$, then $f(Q)$ is a prime component of $\lfloor \Delta\rfloor$.
In conclusion, we have $f^{-1}\lfloor \Delta\rfloor=\lfloor \Delta_Y \rfloor$.
Let $S$ be an effective divisor
for which $\supp S=\supp \lfloor \Delta \rfloor$
and $(X,\Delta-\epsilon S)$ is klt for $\epsilon>0$ small enough.
As finite pull-backs of klt pairs are sub-klt, we have that the pair $(Y,\Delta_Y - \epsilon f^*S)$ is klt for $\epsilon$ small enough.
Since $\supp f^*S =\supp \lfloor \Delta_Y\rfloor$, we conclude that $(Y,\Delta_Y)$ is pdlt.
\end{proof} 

\begin{lemma}\label{lem:pdlt-descends-finite}
Let $(X,\Delta)$ be a log pair with standard coefficients. Write $\Delta=\sum_{P\subset X}\left(1-\frac{1}{n_P}\right)P$ where $n_P$ is a positive integer and the sum runs over all the prime divisors $P$ of $X$.
Let $f\colon Y\rightarrow X$ be a finite morphism 
such that for every prime divisor $Q\in Y$ the ramification index $r_Q$ of $f$ at $Q$ divides $n_{f(Q)}$.
Let $K_Y+\Delta_Y=f^*(K_X+\Delta)$ such that $(Y,\Delta_Y)$ is a log pair.
If $(Y,\Delta_Y)$ has pdlt singularities, then so does $(X,\Delta)$. 
\end{lemma}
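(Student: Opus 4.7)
The plan is to invert the argument used in Lemma~\ref{lem:finite-cover-pdlt}: starting from the pdlt hypothesis on $(Y,\Delta_Y)$, for each allowable $S$ on $X$ I would produce an effective divisor on $Y$ with the right support, and then use that klt singularities descend through finite covers via the ramification formula for log discrepancies.

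First, I would make $\Delta_Y$ explicit by Riemann--Hurwitz: for a prime $Q\subset Y$ mapping to $P=f(Q)$ with ramification index $r_Q$, one gets
\[
\coeff_Q(\Delta_Y)\;=\;r_Q\!\left(1-\tfrac{1}{n_P}\right)-(r_Q-1)\;=\;1-\tfrac{r_Q}{n_P},
\]
interpreted as $1$ when $P\subset\lfloor\Delta\rfloor$. The divisibility hypothesis $r_Q\mid n_P$ guarantees that this coefficient is a standard coefficient (with denominator $n_P/r_Q$) and, crucially for what follows, that $Q\subset\lfloor\Delta_Y\rfloor$ if and only if $P\subset\lfloor\Delta\rfloor$. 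Hence
\[
\supp\lfloor\Delta_Y\rfloor\;=\;f^{-1}\!\bigl(\supp\lfloor\Delta\rfloor\bigr).
\]

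Next, I would take an arbitrary effective divisor $S$ on $X$ with $\supp S=\supp\lfloor\Delta\rfloor$ and observe that $f^*S$ is an effective divisor on $Y$ with $\supp f^*S=\supp\lfloor\Delta_Y\rfloor$. By pdlt of $(Y,\Delta_Y)$, the sub-pair $(Y,\Delta_Y-\epsilon f^*S)$ is klt for every $\epsilon>0$ sufficiently small. Since
\[
K_Y+(\Delta_Y-\epsilon f^*S)\;=\;f^*\bigl(K_X+\Delta-\epsilon S\bigr),
\]
the standard log discrepancy formula $a_{E'}(Y,\Delta_Y-\epsilon f^*S)=r_{E'}\cdot a_E(X,\Delta-\epsilon S)$, valid for divisors $E'$ over $Y$ dominating divisors $E$ over $X$ with ramification $r_{E'}$, transfers positivity of log discrepancies from $(Y,\Delta_Y-\epsilon f^*S)$ to $(X,\Delta-\epsilon S)$. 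Thus $(X,\Delta-\epsilon S)$ is klt, and since $S$ was arbitrary, $(X,\Delta)$ is pdlt by Definition~\ref{def:pdlt}.

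The main technical point is the klt descent through the finite morphism $f$: one has to check that every divisor $E$ over $X$ is dominated by some divisor $E'$ over a normal birational model of $Y$ with the expected ramification, so that positivity of log discrepancies on $Y$ really forces positivity on $X$. This is standard, and the divisibility hypothesis $r_Q\mid n_P$ enters only in the opening Riemann--Hurwitz step, where it guarantees that $\Delta_Y$ is an effective boundary whose floor is supported exactly on the preimage of $\supp\lfloor\Delta\rfloor$; if this divisibility failed, $\Delta_Y$ could have non-standard (or even negative) coefficients and the floor-support identity above would break, rendering the pullback $f^*S$ useless for witnessing pdlt on the $X$-side.
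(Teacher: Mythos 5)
Your argument is correct in spirit but takes a genuinely different route from the paper's. The paper does not pull $S$ back from $X$; it passes to the Galois closure $h\colon Z\to X$ of $f$, notes $\Delta_Z\geq 0$ and hence (by Lemma~\ref{lem:finite-cover-pdlt}) that $(Z,\Delta_Z)$ is pdlt, averages the resulting witness $S_Z$ over the Galois group $G$, and \emph{descends} the $G$-invariant divisor $S_Z$ to a divisor $S$ on $X$ with $h^*S=S_Z$, concluding by the same finite-cover klt descent you invoke. Your direction of travel (choose $S$ on $X$, pull back to $Y$) is cleaner and avoids the Galois closure entirely; your Riemann--Hurwitz computation, the identity $\supp\lfloor\Delta_Y\rfloor=f^{-1}(\supp\lfloor\Delta\rfloor)$, and the log-discrepancy formula $a_{E'}(Y,\cdot)=r_{E'}\,a_E(X,\cdot)$ are all correct and are exactly the right tools.

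The one point you should not wave through is the existence and $\mathbb{Q}$-Cartierness of your chosen $S$. For $f^*S$ to be defined, and for $(X,\Delta-\epsilon S)$ to be a pair at all, you need $S$ (equivalently $K_X+\Delta-\epsilon S$) to be $\mathbb{Q}$-Cartier; on a non-$\mathbb{Q}$-factorial $X$ an effective $\mathbb{Q}$-Cartier divisor supported exactly on $\lfloor\Delta\rfloor$ need not exist a priori, and ``arbitrary $S$'' does not produce one. This is precisely the work the Galois closure does in the paper: the witness $S_Z$ is automatically $\mathbb{Q}$-Cartier (being the difference of two $\mathbb{Q}$-Cartier divisors), and its $G$-average descends to a $\mathbb{Q}$-Cartier $S$ on $X=Z/G$. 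Relatedly, your step ``by pdlt of $(Y,\Delta_Y)$, the pair $(Y,\Delta_Y-\epsilon f^*S)$ is klt'' uses the definition for the particular divisor $f^*S$ rather than for the witness $S_Y$ furnished by the definition; this is fine, but deserves a sentence (convexity of log discrepancies in the boundary, using that $(Y,\Delta_Y)$ is lc and that $f^*S$ and $S_Y$ have the same support). If you restrict to $\mathbb{Q}$-factorial $X$ — which covers the paper's application in Example~\ref{ex:not-pdlt} — your proof is complete as written; in the stated generality you should either add the Galois-closure descent to manufacture $S$, or add a hypothesis guaranteeing such an $S$ exists.
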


\begin{proof}
Let $h\colon Z\rightarrow X$ be the Galois closure of $f$. The condition on the ramification indices of $f$ implies that $\Delta_Z\geq 0$ where
$K_Z+\Delta_Z=h^*(K_X+\Delta)$.
By Lemma~\ref{lem:finite-cover-pdlt}, we conclude that $(Z,\Delta_Z)$ has pdlt singularities.
Let $G$ be the Galois group associated to $h$.
Then, the boundary divisor $\Delta_Z$ is $G$-invariant
and so $\lfloor \Delta_Z \rfloor$ is $G$-invariant.
As $(Z,\Delta_Z)$ is pdlt, there exists an effective divisor $S_Z$ with $\supp S_Z=\supp \lfloor \Delta_Z\rfloor$ and 
$(Z,\Delta_Z-\epsilon S_Z)$ is klt for $\epsilon$ small enough.
Replacing $S_Z$ with $\sum_{g\in G} g^*S_Z/|G|$, we may assume that $S_Z$ is $G$-invariant.
In particular, we can find an effective divisor 
$S$ on $X$ such that $h^*S=S_Z$.
Note that $\supp h^*\lfloor \Delta \rfloor =\supp \lfloor \Delta_Z\rfloor$. 
Therefore, we conclude that $\supp S=\supp \Delta$.
Henceforth, the pair $(X,\Delta-\epsilon S)$ is klt for $\epsilon$ small enough.
\end{proof}

\begin{definition}
{\em 
Let $f\colon Y\rightarrow X$ be a finite morphism.
Let $(Y,\Delta_Y)$ and $(X,\Delta)$ be two pairs.
We say that $f$ is a {\em crepant finite morphism} for the pairs if 
$f^*(K_X+\Delta)=K_Y+\Delta_Y$.
In this setting, we may also say that
$f\colon (Y,\Delta_Y)\rightarrow (X,\Delta)$
is a {\em crepant finite morphism}.
}
\end{definition}

\subsection{Endomorphisms} 
We recall the definitions of polarized and int-amplified endomorphisms on pairs and a key result from \cite{MYY24} that allows us to lift some iteration of such an endomorphism to a finite cover.

\begin{definition}
{\em 
An endomorphism $f\colon X\rightarrow X$ is a \emph{polarized endomorphism} if $f^*A\sim mA$ for some $m\geq 2$ and ample divisor $A$.
An endomorphism $f\colon X\rightarrow X$ is said to be {\em int-amplified} if $f^*A-A$ is ample
for some ample Cartier divisor $A$ in $X$.
}
\end{definition}

\begin{definition}
{\em
We say that a pair $(X,\Delta)$ 
admits a \emph{polarized endomorphism} if there is a polarized endomorphism $f: X \rightarrow X$ such that $f^*(K_X+\Delta)=K_X+\Delta.$
We say that a pair $(X,\Delta)$ admits a {\em int-amplified endomorphism} if there is an int-amplified endomorphism $f\colon X\rightarrow X$ such that
$f^*(K_X+\Delta)=K_X+\Delta$.
}
\end{definition}

Throughout this paper we use ~\cite[Theorem 3.1]{MYY24} several times.
We include its statement for readers convenience.

\begin{theorem}
Let $(X,\Delta)$ be a log Calabi--Yau pair, with $K_X + \Delta \sim 0$. Let $U:= X^{\rm{reg}}\setminus \Delta$.
Suppose that $(X,\Delta)$ admits an int-amplified endomorphism $f \colon (X,\Delta) \to (X,\Delta)$. 
Let $g\colon (Y,\Delta_Y) \to (X,\Delta)$ be a finite cover such that $g^{-1}(U) \to U$ is \'etale.
Then there exists $m,n \gg 0$,
a finite cover $h\colon (Z,\Delta_Z) \to (Y,\Delta_Y)$, 
and an int-amplified endomorphism $f_Z:(Z,\Delta_Z)\to (Z,\Delta_Z)$
such that the following diagram commutes
\[
\xymatrix{
	(Z,\Delta_{Z}) \ar[r]^{f_{Z}} \ar[d]_{g'} & (Z,\Delta_Z) \ar[r]^{h} \ar[d]^{g'} & (Y,\Delta_Y) \ar[d]^g\\
	(X,\Delta) \ar[r]_{f^n} & (X,\Delta) \ar[r]_{f^m} & (X,\Delta).
}
\]
\end{theorem}

\subsection{Toric fibrations} In this subsection, we prove some results regarding toric log Calabi--Yau fibrations.

\begin{definition}
{\em 
A {\em contraction} is a morphism
$f\colon X\rightarrow Y$ for which
$f_*\mathcal{O}_X=\mathcal{O}_Y$.
A {\em fibration} is a contraction
with positive dimension general fiber.
}
\end{definition}

\begin{definition}{\em 
Let $\phi\colon X\rightarrow Z$ be a fibration and $(X,\Delta)$ be a pair.
We say that $(X,\Delta)$ is {\em log Calabi--Yau} over $Z$ if 
$(X,\Delta)$ is log canonical and $K_X+\Delta \sim_{Z,\qq} 0$.
}
\end{definition} 

\begin{definition}
{\em 
Let $(X,\Delta)$ be a log pair
and $X\rightarrow Z$ be a projective fibration.
We say that $(X,\Delta)\rightarrow Z$ is a {\em weak toric log Calabi--Yau fibration}
if the following conditions hold:
\begin{enumerate}
\item the pair $(X,\Delta)$ is log Calabi--Yau over $Z$,
\item we have $K_X+\Delta\sim_Z 0$, 
\item the general fiber $F$ of $X\rightarrow Z$ is a projective toric variety, and 
\item the prime components of $\Delta$ restrict to the prime torus invariant divisors of $F$.
\end{enumerate} 
}
\end{definition} 

\begin{definition}\label{def:toric-log-CY-fib}
{\em 
Let $(X,\Delta)$ be a log pair
and $X\rightarrow Z$ be a projective fibration.
We say that $\phi\colon (X,\Delta)\rightarrow Z$ is a {\em toric log Calabi--Yau fibration} if 
the following conditions hold:
\begin{enumerate}
\item $\phi$ is a weak toric log Calabi--Yau fibration, and 
\item $\phi$ is a formally toric morphism 
over any closed point $z\in Z$.
\end{enumerate}
}
\end{definition} 

We will need the following definition.

\begin{definition}
{\em 
Let $\phi\colon X\to Y$ be a surjective morphism of normal projective varieties, and let $D$ be an effective divisor on $X$. 
\begin{itemize}
\item We say that $D$ is {\em $\phi$-exceptional} if $\supp(\phi(D))$ has codimension at least two in $Y$.
\item We say that $D$ is {\em $\phi$-degenerate} if $\supp(\phi(D))$ has codimension one in $Y$, and there is a prime divisor $E\not\subset \supp(D)$ such that $\supp(\phi(E))= \supp(\phi(D))$.
\end{itemize}
In any of the previous two cases, we say that the divisor $D$ is {\em degenerate}.
}
\end{definition}

The following proposition 
allows us to determine whether
a weak toric log Calabi--Yau fibration
is indeed a toric log Calabi--Yau fibration. 

\begin{proposition}\label{prop:from-weak-to-toric}
Let $\phi\colon (X,\Delta)\rightarrow Z$ be a weak toric log Calabi--Yau fibration.
Assume the following conditions hold:
\begin{enumerate}
\item the variety $X$ is $\qq$-factorial, 
\item $\phi$ is a Fano type morphism,
\item $\phi$ has no degenerate divisors,
\item $Z$ is a smooth variety, and 
\item both the moduli divisor $\mathbf{M}_Z$ and the boundary divisor $B_Z$ induced on $Z$ by the canonical bundle formula applied to $(X,\Delta)$ are trivial.
\end{enumerate} 
Then, the morphism $\phi\colon (X,\Delta)\rightarrow Z$ is a toric log Calabi--Yau fibration.
\end{proposition}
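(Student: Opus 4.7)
The plan is to invoke the Shokurov-type characterization of toric fibrations via relative complexity from \cite[Theorem 1]{MS21}. For a Fano type fibration $\phi$, that theorem reduces the toric-fibration conclusion to the vanishing of the relative complexity
\[
c(X/Z,\Delta)=\dim(X/Z)+\rho(X/Z)-|\Delta^{\mathrm{hor}}|,
\]
where $\Delta^{\mathrm{hor}}$ is the horizontal part of $\Delta$. Hypothesis~(2) supplies the Fano type assumption, so the task is to verify that $c(X/Z,\Delta)=0$.

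Let $F$ be a general fiber and $\Delta_F=\Delta|_F$. By condition~(4) of the weak toric log Calabi--Yau hypothesis, the horizontal prime components of $\Delta$ restrict bijectively to the torus-invariant prime divisors of $F$, both with coefficient one, so $|\Delta^{\mathrm{hor}}|=|\Delta_F|$. Since $(F,\Delta_F)$ is a toric log Calabi--Yau pair, the basic toric identity $\dim F + \rho(F)=|\Delta_F|$ holds. Substituting, $c(X/Z,\Delta)=\rho(X/Z)-\rho(F)$, and the proposition reduces to proving the Picard-rank identity $\rho(X/Z)=\rho(F)$.

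I would establish this identity by showing that the restriction map $r\colon N^1(X/Z)_\qq\to N^1(F)_\qq$ is an isomorphism. Surjectivity follows because $N^1(F)_\qq$ is generated by the torus-invariant prime divisors of $F$, and by condition~(4) of the weak toric log Calabi--Yau hypothesis each such class is the restriction of a horizontal component of $\Delta$, which is $\qq$-Cartier on $X$ by hypothesis~(1). For injectivity, take a $\qq$-Cartier divisor $L$ on $X$ with $L|_F\equiv 0$; then $L$ is numerically $\phi$-trivial, so $L\equiv \phi^*M+V$ for some vertical $\qq$-divisor $V$. Hypothesis~(3) (no degenerate divisors), combined with $Z$ smooth in (4), forces every vertical prime divisor of $X$ to be the unique component of $\phi^*D$ for some prime $D\subset Z$; hence $V$ is numerically a pullback from $Z$, and $[L]=0$ in $N^1(X/Z)_\qq$.

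The main technical obstacle I expect is the injectivity step, specifically the assertion that a $\qq$-Cartier divisor $L$ with $L|_F\equiv 0$ decomposes numerically as $\phi^*M+V$. The triviality of both $\mathbf{M}_Z$ and $B_Z$ from hypothesis~(5) is essential here: together with the relation $K_X+\Delta\sim_Z 0$ from the weak toric log Calabi--Yau structure, it gives $K_X+\Delta\sim \phi^*K_Z$, so the log canonical class is purely pulled back from $Z$ and no discriminant-type contribution on $Z$ can obstruct the required pullback decomposition. Once $\rho(X/Z)=\rho(F)$ is established, the relative complexity vanishes and \cite[Theorem 1]{MS21} concludes that $\phi$ is formally toric over every closed point of $Z$, hence a toric log Calabi--Yau fibration in the sense of Definition~\ref{def:toric-log-CY-fib}.
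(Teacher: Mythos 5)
Your overall strategy coincides with the paper's: reduce to the Picard-rank bound $\rho(X/Z)\leq\rho(F)$, use the toric identity $\dim F+\rho(F)=|\Delta_F|$ on the general fiber, and conclude via \cite[Theorem 1]{MS21}. The gap is in your injectivity step. The implication ``$L|_F\equiv 0$ for the general fiber $F$, hence $L$ is numerically $\phi$-trivial, hence $L\equiv\phi^*M+V$ with $V$ vertical'' is false for a general fibration: take $X=E\times E\to E$ with $E$ an elliptic curve and $L=s-\delta$ the difference of a constant section and the diagonal; then $L|_F\equiv 0$ but $L$ is not numerically equivalent to a pullback plus a vertical divisor. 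What rules this out here is precisely hypothesis (2), which you list but never use in this step: since $\phi$ is of Fano type, $\operatorname{Cl}(X/Z)$ is finitely generated, so there is an open $U\subseteq Z$ over which relative triviality of a divisor is detected by its restriction to a single closed fiber; this gives $\rho(X_U/U)\leq\rho(F)$, and the divisors supported over $Z\setminus U$ are then handled by $\qq$-factoriality together with the no-degenerate-divisors hypothesis (each such prime divisor is the unique component of the pullback of a Cartier divisor on the smooth $Z$, hence relatively trivial). Without routing the argument through the Fano type hypothesis in this way, the injectivity claim does not stand.

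A secondary point: \cite[Theorem 1]{MS21} is a statement about the complexity of a log Calabi--Yau pair \emph{over a closed point} $z\in Z$, so to deduce that $\phi$ is formally toric over every $z$ one must first exhibit such a pair. The paper does this by adding $\phi^*(H_1+\dots+H_d)$ for a reduced simple normal crossing divisor $H_1+\dots+H_d$ through $z$, and hypothesis (5) (triviality of $\mathbf{M}_Z$ and $B_Z$) is exactly what guarantees, via the canonical bundle formula, that $(X,\Delta+\phi^*H_1+\dots+\phi^*H_d)$ is log Calabi--Yau over $z$; the count $|\phi^*H_1+\dots+\phi^*H_d|\geq d$ then cancels the $\dim Z$ contribution in the complexity. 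Your write-up instead assigns hypothesis (5) a role in the Picard-rank argument and skips the construction of the local pair, so this part of the application of \cite{MS21} needs to be filled in.
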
 

\begin{proof}
Since $\phi$ is a Fano type morphism, 
we know that ${\rm Cl}(X/Z)$ is finitely generated.
Hence, there is an open subset $U\subseteq Z$
such that a divisor $D$ on $X_U$ is trivial over $Z$ if and only if $D$ is trivial on a closed fiber of $X_U\rightarrow U$.
In particular, we have that $\rho(X_U/U)\leq \rho(F)$ where $F$ is a general fiber.
Furthermore, the variety $X$ has no degenerate divisors over $Z$.
Thus, we get $\rho(X/Z)\leq \rho(F)$.
Indeed, all the divisors on the complement of $X_U$ are trivial over $Z$.
Note that these divisors are $\qq$-Cartier
by the $\qq$-factoriality assumption.
By condition (4) in the definition of weak toric log Calabi--Yau fibrations, we know that the number of prime components of $\Delta_{\rm hor}$ is at least $\dim(F)+\rho(F)$.
Thus, we conclude that the number of prime components of $\Delta_{\rm hor}$ is at least $\dim(F)+\rho(X/Z)$.

Let $z\in Z$ be a closed point.
Let $d:=\dim Z$ and 
$H_1+\dots+H_d$ be a reduced simple normal crossing divisor through $z$.
Then, the pair $(Z,H_1+\dots+H_d;z)$ is log canonical.
Hence, the pair
$(X,\Delta+\phi^*H_1+\dots+\phi^*H_d)$ is log Calabi--Yau over $z\in Z$.
Indeed, by condition (5) in the statement of the proposition, we know that the log Calabi--Yau pair
$(X,\Delta+\phi^*H_1+\dots+\phi^*H_d)$
induces $(Z,H_1+\dots+H_d)$ by the canonical bundle formula.
Note that
\[
c_z(X/Z,\Delta+\phi^*H_1+\dots+\phi^*H_d)=
\dim X + \rho(X/Z) - |\Delta + \phi^*H_1+\dots+\phi^*H_d| \leq 
\]
\[
(\dim F +\rho(F) - |\Delta_{\rm hor}|) + d -
|\phi^*H_1+\dots+\phi^*H_d| \leq 0.
\]
By~\cite[Theorem 1]{MS21}, we conclude that $\phi\colon (X,\Delta)\rightarrow Z$ is formally toric over $z\in Z$. 
\end{proof}

\begin{lemma}\label{lem:toric-vg-g}
Let $\phi\colon (X,\Delta)\rightarrow Z$ 
be a log Calabi--Yau fibration.
If a very general fiber of $\phi$ is a toric log Calabi--Yau pair,
then the general fiber of $\phi$ is a toric
log Calabi--Yau pair.
\end{lemma}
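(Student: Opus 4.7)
The plan is to show that toricity of the fiber is detected by numerical invariants---dimension, Picard rank, and number of boundary components---which stabilize on an open subset of $Z$, and then to apply the complexity characterization of toric pairs fiberwise.

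First I would choose a nonempty open subset $U\subseteq Z$ over which $\phi$ is flat with normal integral fibers and, for every horizontal prime component $D_i$ of $\Delta$, the number of prime components of $D_i|_{X_z}$ is constant on $U$. After further shrinking, the Picard rank $\rho(X_z)$ becomes constant on $U$, equal to its generic minimum value, by upper semi-continuity of the Picard number in flat projective families together with the Noetherian condition on $Z$. Since the very general locus $V\subseteq Z$ on which the fiber is toric log Calabi--Yau must meet the open set $U$ (the base field $\kk$ being uncountable), we may fix $z_0\in U\cap V$ and read off the constant values on $U$ of $\dim X_z$, $\rho(X_z)$, and $|\Delta_z|$ from the toric log Calabi--Yau fiber $(X_{z_0},\Delta_{z_0})$.

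At $z_0$, toricity gives that $\Delta_{z_0}$ is the reduced toric boundary, so $\dim X_{z_0}+\rho(X_{z_0})=|\Delta_{z_0}|$; consequently the complexity
\[
c(X_z,\Delta_z)=\dim X_z+\rho(X_z)-|\Delta_z|
\]
vanishes and $\Delta_z$ is reduced for every $z\in U$. Combined with the restriction $K_{X_z}+\Delta_z\sim_\qq 0$ of the relative log Calabi--Yau structure and with the log canonicity of $(X_z,\Delta_z)$ generically on $U$, the complexity characterization of~\cite[Theorem 1]{MS21} applied fiberwise forces each pair $(X_z,\Delta_z)$ with $z\in U$ to be a toric pair whose boundary is its torus invariant divisor, which is exactly the desired conclusion.

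The main obstacle is the simultaneous stabilization of $\rho(X_z)$ and of $|\Delta_z|$ on a common nonempty open subset; this is precisely what distinguishes \emph{general} from \emph{very general} fibers, and is dealt with by a shrinking argument on the base, together with some care since the fibers of $\phi$ are only assumed normal.
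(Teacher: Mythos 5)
Your route is genuinely different from the paper's, but it has a gap at its central step. The assertion that ``the Picard rank $\rho(X_z)$ becomes constant on $U$, equal to its generic minimum value, by upper semi-continuity of the Picard number in flat projective families together with the Noetherian condition'' is not a valid appeal. In characteristic zero the specialization map $\operatorname{NS}(X_{\bar\eta})\rightarrow \operatorname{NS}(X_z)$ is injective, so the generic value of $\rho$ is indeed the minimum; but the locus where $\rho$ jumps above this value is in general only a \emph{countable} union of proper closed subsets of the base (Noether--Lefschetz--type loci), which may be Zariski dense, so no Noetherian shrinking produces a nonempty open set on which $\rho$ is constant. Your argument needs the \emph{upper} bound $\rho(X_z)\leq |\Delta_z|-\dim X_z$ on every fiber over $U$ for the complexity to vanish; a jump of $\rho$ on a general-but-not-very-general fiber makes the complexity positive and the toric characterization inapplicable. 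This jumping phenomenon is exactly the difference between ``very general'' and ``general'' that the lemma is meant to bridge, so the gap sits at the heart of the statement rather than at its periphery. (For smooth families one could argue that toricity of the very general fiber forces $h^{2,0}=0$ on all fibers and hence local constancy of $\rho$; but the fibers here are only normal, and for normal fibers the relevant rank --- of $\operatorname{Cl}$, or of the span of the boundary components --- can genuinely vary on special fibers.)

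The paper's proof sidesteps this entirely: it uses~\cite[Lemma 2.1]{Via13} to identify a very general fiber abstractly with the geometric generic fiber, transfers the maximal torus $\mathbb{G}_m^f\leqslant {\rm Aut}$ of the toric very general fiber to ${\rm Aut}(X_{\bar\eta},\Delta_{\bar\eta})$, and then spreads the torus action out over an affine open subset of $Z$, so that every fiber over that open set carries a faithful action of a torus of maximal dimension and is therefore a toric log Calabi--Yau pair. If you wish to salvage a numerical argument, you would need to replace $\rho(X_z)$ by the rank of the subgroup of $\operatorname{Cl}(X_z)$ generated by the components of $\Delta_z$ and show that relations among these classes on the (geometric) generic fiber spread out over an open subset of $Z$; as written, the proof does not go through.
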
 

\begin{proof}
Let $f$ be the dimension of a general fiber.
Let $(X_z,\Delta_z)$ be a very general fiber
and $(X_{\bar{\eta}},\Delta_{\bar{\eta}})$ 
be the geometric generic fiber.
By~\cite[Lemma 2.1]{Via13}, there is an abstract isomorphism
$(X_z,\Delta_z)\simeq (X_{\bar{\eta}},\Delta_{\bar{\eta}})$
that depends on the choice of an isomorphism of fields
$\cc\rightarrow \bar{\eta}$.
In particular, we get 
$\mathbb{G}_{m,\bar{\eta}}^f \leqslant {\rm Aut}(X_{\bar{\eta}},\Delta_{\bar{\eta}})$.
We conclude that ${\rm Aut}(X_\eta,\Delta_\eta)$ contains a torus $\mathbb{T}_\eta$ of rank at least $f$.
Therefore, over an affine $U$ of the base $Z$, we
have that ${\rm Aut}(X_U,\Delta_U)$ contains a torus $\mathbb{T}_U$ of rank at least $f$.
Further, we may assume that every fiber over $U$ is a log Calabi--Yau pair.
Thus, for every $u\in U$, the log Calabi--Yau pair
$(X_u,\Delta_u)$ satisfies that $\mathbb{G}_m^f\leqslant{\rm Aut}(X_u,\Delta_u)$. 
Hence, the log Calabi--Yau pair $(X_u,\Delta_u)$ is toric for every $u\in U$.
\end{proof}

\begin{lemma}\label{lem:div-hor}
Let $\phi\colon (X,\Delta)\rightarrow Z$ be a log Calabi--Yau fibration with $K_X+\Delta\sim 0$.
There exists a commutative diagram
\[
\xymatrix{
(X,\Delta)\ar[d]_-{\phi} & (X',\Delta')\ar[l]_-{f}\ar[d]^-{\phi'} \\
Z & Z' \ar[l]^-{f_Z}
}
\]
where $f$ and $f_Z$ are crepant finite morphisms,
$\phi'$ is a log Calabi--Yau fibration
with $K_{X'}+\Delta' \sim 0$, 
and the restriction of every stratum of 
$\Delta'$ to a general fiber of $\phi'$ is irreducible.
\end{lemma}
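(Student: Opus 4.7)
The idea is to trivialize the Galois action on horizontal strata by a finite cover of the base. Let $\eta$ be the generic point of $Z$, let $\bar\eta$ be a geometric point above it, and consider the geometric generic fiber $(X_{\bar\eta},\Delta_{\bar\eta})$ of $\phi$. The set $\mathcal{S}$ of irreducible components of all strata of $\Delta_{\bar\eta}$ is finite and carries a natural continuous action of the absolute Galois group $\operatorname{Gal}(\bar\eta/\eta)$, which factors through a finite quotient $G$. Let $L$ be the Galois extension of $k(Z)$ inside $k(\bar\eta)$ corresponding to the kernel of this action, and define $f_Z\colon Z'\to Z$ to be the normalization of $Z$ in $L$.

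With $f_Z$ in hand, I would take $X'$ to be the normalization of the fiber product $X\times_Z Z'$, producing the finite morphism $f\colon X'\to X$ and the induced fibration $\phi'\colon X'\to Z'$. Define $\Delta'$ by the crepancy equation $K_{X'}+\Delta'=f^*(K_X+\Delta)$; since $K_X+\Delta\sim 0$, this pulls back to $K_{X'}+\Delta'\sim 0$. Because $f_Z$ is étale at $\eta$, the morphism $f$ is unramified at the generic point of every horizontal divisor of $\Delta$, so these components contribute coefficients in $[0,1]$ to $\Delta'$. The geometric generic fiber of $\phi'$ is $(X_{\bar\eta},\Delta_{\bar\eta})$ viewed as a pair over $L$, and by the construction of $L$ each element of $\mathcal{S}$ is $\operatorname{Gal}(\bar\eta/L)$-stable, hence defined over $L=k(Z')$. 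Therefore each horizontal stratum of $\Delta'$ has irreducible restriction to the generic fiber of $\phi'$, and by spreading out the irreducibility propagates to a general fiber.

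The main obstacle is to verify globally that $\Delta'\geq 0$ and that $(X',\Delta')$ is log canonical. Along horizontal components this is automatic by the paragraph above, but $f$ may ramify along vertical divisors lying over the branch locus $B_{Z'/Z}\subset Z$, where the ramification formula $\Delta'=f^*\Delta-R_f$ could force negative coefficients on vertical components that do not appear in $\Delta$ with coefficient close enough to $1$. The plan to resolve this is to control the branch locus of $f_Z$: one first replaces $Z$ by a suitable log resolution of the image of the vertical part $\Delta_{\mathrm{ver}}$ and of the loci where $L/k(Z)$ ramifies, then normalizes in $L$, so that the codimension-one ramification of $f_Z$ is supported on components whose preimages are contained in $\supp \lfloor \Delta \rfloor_{\mathrm{ver}}$. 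Combined with the fact that finite crepant pullbacks of log canonical pairs are log canonical whenever the ramification indices are compatible with the coefficients of $\Delta$ (the same argument used in Lemma~\ref{lem:finite-cover-pdlt}), this gives the effectivity of $\Delta'$ and the log canonicity of $(X',\Delta')$, completing the commutative diagram with the desired stratum-irreducibility property.
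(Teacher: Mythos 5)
Your core construction is the same as the paper's: produce a finite cover $f_Z\colon Z'\to Z$ that splits the horizontal components, take $X'$ to be the normalization of $X\times_Z Z'$, and pull back crepantly. The paper does this one prime component $P$ of $\Delta$ at a time, letting $Z'\to Z$ be the finite part of the Stein factorization of $P^\nu\to Z$ and iterating; you do it in one step via the Galois action of $\operatorname{Gal}(\bar\eta/\eta)$ on the components of the strata of the geometric generic fiber. Your packaging has a genuine advantage: it treats \emph{all} strata at once, whereas the paper's proof only splits the prime divisorial components of $\Delta$ (splitting each component on the general fiber does not by itself make every higher-codimension stratum restrict irreducibly, so your version is closer to what the statement actually asserts and to how the lemma is used in Theorem~\ref{thm:main-pdlt}).

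The flaw is in your proposed remedy for the effectivity of $\Delta'$. Replacing $Z$ by a log resolution of the image of $\Delta_{\mathrm{ver}}$ and of the ramification loci is a \emph{birational} modification of $Z$, which forces a birational (not finite) modification of $X$; this is incompatible with the conclusion that $f$ and $f_Z$ are finite morphisms, and you cannot recover the original $\phi$ afterwards. The concern you are addressing is real in general --- if $f_Z$ ramifies along a divisor $D_Z\subset Z$ and $\Delta$ has small coefficient along a divisor of $X$ dominating $D_Z$, the crepant pullback acquires negative coefficients --- but the fix does not work as described. The correct resolution is to show that the cover you construct is unramified in codimension one in the situation at hand: for instance, in the paper's application $Z$ is an abelian variety, and since $P^\nu$ (or a stratum) carries a log Calabi--Yau structure by adjunction, the canonical bundle formula forces the ramification divisor of $Z'\to Z$ to vanish, so $Z'\to Z$ is \'etale by purity and $\Delta'\geq 0$ is automatic. (To be fair, the paper's own proof is silent on this point as well.) With that substitution your argument goes through; without it, the last paragraph of your proposal does not establish the claim in a form compatible with the statement.
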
 

\begin{proof}
Let $P$ be a prime component of $\Delta$.
Let $P^\nu$ be its normalization.
If the restriction of $P$ to a general fiber of $\phi$ is not prime, then
the morphism $p\colon P^\nu\rightarrow Z$ does not have connected fibers.
We let $Z_0\rightarrow Z$ be the finite morphism associated to the Stein factorization of $p$. Then, we let $X_0$ be the normalization of the fiber product $X\times_{Z_0} Z$. 
The pull-back of $P$ on $X_0$ satisfies that the restriction of its prime components to the general fiber of $\phi_0\colon X_0\rightarrow X_0$ are irreducible. Proceeding inductively with the other prime components of $\Delta$, the result follows.
\end{proof} 

\subsection{Standard coefficients}
In this subsection, we prove a lemma
regarding coefficients of pairs admitting
int-amplified endomorphisms.
The following lemma is essentially from~\cite[Proposition 5.6]{Yos21},
we prove it for the sake of completeness.

\begin{lemma}\label{lem:stand-coeff}
Let $(X,\Delta)$ be a log Calabi--Yau pair.
Let $f\colon X\rightarrow X$ be an
int-amplified endomorphism
with $f^*(K_X+\Delta)=K_X+\Delta$.
Then, the divisor $\Delta$ must have standard coefficients.
\end{lemma}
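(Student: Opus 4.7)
The plan is to extract a coefficient relation from the hypothesis $f^*(K_X+\Delta)=K_X+\Delta$ via Riemann--Hurwitz and then exploit the forward orbit structure on the finite set of prime components of $\supp\Delta$, ruling out the bad case using int-amplification. For any finite endomorphism of a normal variety we have $K_X=f^*K_X+R_f$ with $R_f=\sum_E(r_E-1)E$, where $E$ runs over prime divisors on $X$ and $r_E$ is the ramification index of $f$ at $E$. Substituting into the hypothesis and comparing coefficients at a prime divisor $E$ with $f(E)=P$ (setting $a_Q=0$ whenever $Q\not\subset\supp\Delta$) yields the identity
\[
1-a_E = r_E(1-a_P), \qquad (\ast)
\]
which is the workhorse of the argument.

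Plugging $a_P=0$ into $(\ast)$ forces $r_E=1$ and $a_E=0$, so $f$ maps the finite set of prime components of $\supp\Delta$ to itself. Replacing $f$ by a suitable iterate (still int-amplified), I may assume $f$ fixes every prime component of $\supp\Delta$ setwise. For such a fixed prime $P$, taking $E=P$ in $(\ast)$ gives $(r_P-1)(1-a_P)=0$, so either $a_P=1$, which is already standard, or $r_P=1$ and $a_P<1$. In the latter subcase I look for a second prime pre-image: if some $E\neq P$ satisfies $f(E)=P$, then $E$ cannot lie in $\supp\Delta$ (every prime of $\supp\Delta$ is $f$-fixed, while $f(E)=P\neq E$), so $a_E=0$ and $(\ast)$ reads $1=r_E(1-a_P)$, giving the standard coefficient $a_P=1-1/r_E$.

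The only remaining case is when $P$ is the unique prime pre-image of itself under $f$, so that $f^*P=P$ and the degree formula $\deg(f)=\sum_E r_E\deg(f|_E)$ collapses to $\deg(f|_P)=\deg(f)$. Now $f|_P\colon P\to P$ inherits int-amplification from $f$, since $(f^*A-A)|_P=(f|_P)^*(A|_P)-A|_P$ is ample on $P$ as the restriction of an ample divisor to a closed subvariety. In the polarized subcase $f^*A\sim mA$ restricts to $(f|_P)^*(A|_P)\sim m(A|_P)$, and the top self-intersection formula forces $\deg(f)=m^{\dim X}$ while $\deg(f|_P)=m^{\dim X-1}$, contradicting $\deg(f)=\deg(f|_P)$ since $m\geq 2$. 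For the general int-amplified case, iterating gives $(f^n)^*P=P$ for all $n\geq 1$; passing to a small $\mathbb{Q}$-factorialization if $P$ is not already $\mathbb{Q}$-Cartier, the class $[P]\in N^1(X)_{\mathbb{R}}$ is a nonzero vector fixed by every $(f^*)^n$, so $f^*$ has an eigenvalue of modulus $1$. This contradicts the fact, due to Meng, that every eigenvalue of $f^*$ on $N^1(X)_{\mathbb{R}}$ has modulus strictly greater than one when $f$ is int-amplified.

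The main obstacle will be this final step in the int-amplified regime: the polarized subcase is clean thanks to the dimension count $m^{\dim X}=m^{\dim X-1}$, but for general int-amplified $f$ the exclusion of $f^*P=P$ requires either Meng's spectral bound or an equivalent iterated intersection-theoretic computation (and a minor detour through a small $\mathbb{Q}$-factorialization to make $[P]$ a well-defined numerical class). Everything else is bookkeeping from the identity $(\ast)$ together with the finiteness of $\supp\Delta$.
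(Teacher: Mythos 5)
Your Riemann--Hurwitz identity $(\ast)$, the observation that $f$ maps prime components of $\supp\Delta$ into prime components of $\supp\Delta$, and your final exclusion of $f^*P=P$ via Meng's spectral characterization of int-amplified endomorphisms are all fine (the last point is essentially how the paper uses \cite[Theorem 3.3(2)]{Men17}). The gap is the reduction ``replacing $f$ by a suitable iterate, I may assume $f$ fixes every prime component of $\supp\Delta$ setwise.'' The induced self-map of the finite set of components of $\supp\Delta$ is only forward-invariant; it need not be injective, and a self-map of a finite set has an iterate equal to the identity only if it is a bijection. The paper's own Example~\ref{ex:not-pdlt} defeats the reduction: for the degree-$4$ Latt\`es endomorphism of $\bigl(\pp^1,\tfrac12(\{0\}+\{1\}+\{\lambda\}+\{\infty\})\bigr)$ induced by multiplication by $2$ on an elliptic curve, all four components of $\supp\Delta$ map to the single branch point lying under the origin, and no iterate fixes the other three. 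This breaks both halves of your subsequent case analysis: components that are never fixed by any iterate are simply not treated, and for a fixed component $P$ the claim that a second preimage $E\neq P$ cannot lie in $\supp\Delta$ fails (in the example $P$ is fixed with $r_P=1$ while its three other preimage components all carry coefficient $\tfrac12$), so $(\ast)$ degenerates to the tautology $\tfrac12=\tfrac12$ and yields no information about $a_P$.

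The structural reason your scheme cannot be patched by iterating is that standardness does not propagate forward along $f$: from $1-a_E=r_E(1-a_P)$ with $a_P=1-\tfrac1n$ one gets $a_E=1-\tfrac{r_E}{n}$, which is standard only when $r_E$ divides $n$. What does propagate is non-standardness, and in the backward direction: if $a_P$ is not standard, then every component of $f^{-1}(P)$ has non-standard coefficient. This is the paper's route: the finite set $\mathcal F$ of non-standard components is backward-invariant, so a preimage count forces each $P\in\mathcal F$ to be totally invariant under some iterate, i.e.\ $(f^\ell)^*P=NP$; int-amplification gives $N>1$, and applying $(\ast)$ to $f^{q\ell}$ and letting $q\to\infty$ forces $a_P=1$, contradicting $P\in\mathcal F$. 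Your argument only ever reaches the totally-invariant situation in the subcase $N=1$, which is precisely the case that int-amplification rules out; the case $N>1$, which is the one that actually occurs and does the work, never appears.
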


\begin{proof}
Let $\mathcal{F}$ be the set of prime divisors $P$ of $X$ 
for which ${\rm coeff}_P(\Delta)$ is not a standard coefficient.
Since $0$ is a standard coefficient, $\mathcal{F}$ is a finite set. 

We argue that if $P\in \mathcal{F}$, then every prime component of 
$f^{-1}(P)$ also belongs to $\mathcal{F}$.
Assume by contradiction that a prime component $Q$ in $f^{-1}(P)$ is not in 
$\mathcal{F}$.
By Riemann-Hurwitz we can write 
\begin{equation}\label{eq:RH}
{\rm coeff}_Q(\Delta)=
1-(1-{\rm coeff}_P(\Delta))r_Q,
\end{equation} 
where $r_Q$ is the ramification of $f$ at $Q$.
If ${\rm coeff}_Q(\Delta)=1$, then by~\eqref{eq:RH}, we get ${\rm coeff}_P(\Delta)=1$.
On the other hand, if ${\rm coeff}_Q(\Delta)=1-\frac{1}{n}$ for some positive integer $n$, then 
from~\eqref{eq:RH}, we get 
${\rm coeff}_P(\Delta)=1-\frac{1}{r_Qn}$. We conclude that for each $P\in \mathcal{F}$
every prime component $Q$ of $f^{-1}(P)$ is in $\mathcal{F}$.
Then, for each $P \in \mathcal{F}$
and some $\ell\in \zz_{>0}$, we have 
$f^{-\ell}(P)=P$.
Let $N>1$ be the positive integer for which
$(f^{\ell})^*P = NP$ (see,~\cite[Theorem 3.3(2)]{Men17}).
Note that the previous equality holds for the smooth locus on which we can pull-back divisors.
Hence, applying~\eqref{eq:RH} to 
$f^{q\ell}$ for $q\in \zz_{>0}$, we get
\[
{\coeff}_{P}(\Delta)=
1-(1-{\rm coeff}_P(\Delta))Nq,
\]
so 
\[
{\rm coeff}_P(\Delta)=1.
\]
This contradicts the fact that $P\in \mathcal{F}$. 
Hence, $\mathcal{F}$ is empty and we conclude that $\Delta$ has standard coefficients.
\end{proof} 

\subsection{Small $\mathbb{Q}$-factorializations}

In this subsection, we prove a lemma that will allow us to take small $\qq$-factorializations in the proof of Theorem~\ref{thm:main-pdlt}.

\begin{lemma}\label{lem:passing-to-small-Q-fact}
Let $(X,\Delta)$ be a log Calabi--Yau pair with pdlt singularities.
Assume that there is a sequence of morphisms
\[
\xymatrix{
(X,\Delta)=:(X_0,\Delta_0) &(X_1,\Delta_1) \ar[l]_-{f_1} &\;\cdots\; \ar[l]_-{f_2} &(X_k,\Delta_k) \ar[l]_-{f_k}
}
\]
such that each $f_i$ is either a crepant finite morphism or a birational contraction, and $(X_k,\Delta_k)$ admits a weak toric log Calabi--Yau fibration over an abelian variety.
Assume that each $(X_i,\Delta_i)$ is pdlt.
Then, some finite cover of $(X,\Delta)$ admits a weak toric log Calabi--Yau fibration over an abelian variety.
\end{lemma}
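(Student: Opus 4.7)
The plan is to induct on $k$, the length of the sequence. The base case $k=0$ is immediate, since $(X_0,\Delta_0)$ itself is the required object. For the inductive step, I apply the induction hypothesis to the sub-sequence $(X_1,\Delta_1)\leftarrow\cdots\leftarrow(X_k,\Delta_k)$ of length $k-1$ to obtain a crepant finite cover $g\colon (Y_1,\Delta_{Y_1}) \to (X_1,\Delta_1)$ together with a weak toric log Calabi--Yau fibration $\phi\colon Y_1\to A$ over an abelian variety. The remaining task is to produce a corresponding finite cover of $(X_0,\Delta_0)$, splitting into two cases according to the type of $f_1\colon X_1\to X_0$.

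If $f_1$ is a crepant finite morphism, the composition $f_1\circ g\colon Y_1\to X_0$ is immediately a crepant finite cover of $(X_0,\Delta_0)$ and $\phi$ is the required weak toric log Calabi--Yau fibration. If $f_1$ is a birational contraction, I take the Stein factorization of the proper, generically finite composition $h:=f_1\circ g\colon Y_1\to X_0$, say
\[
Y_1 \xrightarrow{p} Y_0 \xrightarrow{q} X_0,
\]
where $p$ has connected fibers (hence is birational between normal varieties of the same dimension) and $q$ is finite. I define $\Delta_{Y_0}$ by $K_{Y_0}+\Delta_{Y_0}=q^*(K_{X_0}+\Delta_0)$, and a tower-of-pullbacks computation gives $p^*(K_{Y_0}+\Delta_{Y_0})=K_{Y_1}+\Delta_{Y_1}$, so $p$ is also crepant. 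Effectivity $\Delta_{Y_0}\geq 0$ follows by comparing coefficients on non-$p$-exceptional prime divisors with $\Delta_{Y_1}\geq 0$, so $q\colon (Y_0,\Delta_{Y_0})\to(X_0,\Delta_0)$ is a crepant finite cover.

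The key step is to descend $\phi\colon Y_1\to A$ along $p$. The image in $Y_0$ of the $p$-exceptional locus is a closed subset $Z\subset Y_0$ of codimension at least two; on the complement $V:=Y_0\setminus Z$, the map $p$ is an isomorphism, yielding a well-defined morphism $\phi|_V\colon V\to A$. By Weil's extension theorem for maps from a normal variety to an abelian variety, this extends uniquely to a morphism $\phi_0\colon Y_0\to A$. I then verify that $\phi_0$ is a weak toric log Calabi--Yau fibration: the log Calabi--Yau condition and the relative triviality $K_{Y_0}+\Delta_{Y_0}\sim_A 0$ descend from $(Y_1,\Delta_{Y_1})$ via $p_*$; a general fiber of $\phi_0$ is isomorphic in codimension one to a general fiber of $\phi$, which is a toric log Calabi--Yau pair, so it remains toric; and the prime components of $\Delta_{Y_0}$ restrict to the torus invariant boundary on a general fiber.

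The main obstacle is this descent through a birational contraction, which requires simultaneously applying Weil's extension theorem and verifying that the weak toric log Calabi--Yau structure persists after the birational modification $p$. The pdlt hypothesis on each $(X_i,\Delta_i)$ plays a role in ensuring that the boundaries remain effective and behave well under both crepant finite morphisms and birational contractions in the sequence, so that the induction can be carried out cleanly.
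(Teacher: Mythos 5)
Your overall architecture (induct on the sequence, compose/Stein--factorize to produce the finite cover, descend the fibration through the birational piece) matches the paper's reduction. But the key step --- descending $\phi\colon Y_1\to A$ through the birational contraction $p\colon Y_1\to Y_0$ --- rests on a false statement. There is no ``Weil extension theorem for maps from a \emph{normal} variety to an abelian variety'': the classical theorem requires smoothness. The standard counterexample is the projective cone $C$ over an elliptic curve $E$; the projection $C\setminus\{\text{vertex}\}\to E$ is defined off a codimension-two subset of the normal surface $C$ but does not extend to the vertex, precisely because the blow-up resolving the indeterminacy has exceptional fiber mapping isomorphically onto $E$ rather than to a point. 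So knowing that $p$ is an isomorphism outside a codimension-two subset $Z\subset Y_0$ is not enough; you must show that $\phi$ contracts every fiber of $p$ to a point, and only then does the factorization $\phi_0\colon Y_0\to A$ exist (by rigidity, or by the paper's device of descending the semiample divisor $\phi^*H$).

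This is exactly where the klt-type/pdlt hypothesis enters, and it is the idea your proof is missing: since the pairs are pdlt, hence of klt type, the fibers of the birational contraction are rationally chain connected by \cite[Corollary 1.6]{HM07}, and a rationally chain connected variety admits no nonconstant morphism to an abelian variety; hence $\phi$ is constant on the fibers of $p$ and the descent goes through. (Note that the cone over an elliptic curve is lc but not klt, which is why the counterexample does not arise here.) Your closing remark that the pdlt hypothesis serves only to keep boundaries effective misses this, which is its essential use in the descent step. Once this gap is filled, the rest of your argument (crepancy of $q$, effectivity of $\Delta_{Y_0}$, and the verification of the weak toric log Calabi--Yau conditions on general fibers) is in line with the paper's proof.
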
 

\begin{proof}
By using the Stein factorization, we may assume that 
$k=2$ and we have 
\[
\xymatrix{
(X_0,\Delta_0) &(X_1,\Delta_1)\ar[l]_-{f_1} &(X_2,\Delta_2) \ar[l]_-{f_2}
}
\]
where $f_1$ is finite and $f_2$ is birational.
By assumption, there is a weak toric log Calabi--Yau fibration $p_2\colon (X_2,\Delta_2)\rightarrow A$.
Note that $p_2(f_2^{-1}(x))$ is zero dimensional on $A$ for every $x\in X_1$.
Indeed, the fibers of $f_2$ are rationally chain connected~\cite[Corollary 1.6]{HM07}.
We argue that $X_1$ admits a projective fibration onto $A$.
Indeed, if $H$ is an ample divisor on $A$, then 
$p_2^*H$ is a semiample divisor on $X_2$.
Since $p_2^*H$ is trivial along the fibers of $f_2$, it descends to a semiample divisor
${f_2}_*p_2^*H$ on $X_1$.
The ample model of ${f_2}_*p_2^*H$ is $A$ so
$X_1$ admits a projective fibration $p_1\colon X_1\rightarrow A$.
Note that the general fiber of $p_1$ is the image of the general fiber of $p_2$, so it is a projective toric variety.
It then follows that 
$(X_1,\Delta_1)$ admits a weak toric log Calabi--Yau fibration to an abelian variety.
\end{proof} 

\subsection{Polarized endomorphisms and fibrations}
In this subsection, we prove a lemma regarding polarized endomorphisms and fibrations.

\begin{lemma}\label{lem:polarized-with-fibration}
Let $X$ be a normal projective variety
admitting a fibration $\phi\colon X\rightarrow Y$.
Consider a commutative diagram
\[
\xymatrix{
X\ar[d]_-{\phi}\ar[r]^-{f} & X\ar[d]^-{\phi} \\
Y\ar[r]^-{f_Y} & Y
}
\]
where both $f$ and $f_Y$ are surjective endomorphisms.
If $f$ induces an isomorphism on very general fibers of $\phi$, then $f$ is not of int-amplified type.
\end{lemma}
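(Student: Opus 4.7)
The plan is to derive a contradiction from the assumption that $f$ is int-amplified by computing a fiber intersection number two different ways. Suppose for contradiction there exists an ample Cartier divisor $A$ on $X$ with $f^{*}A - A$ ample, and let $n$ denote the dimension of a general fiber of $\phi$; since $\phi$ is a fibration, $n \geq 1$.

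First I would choose a point $y \in Y$ that is very general and for which $f_Y(y)$ is also very general. Since $f_Y$ is surjective, the preimage of any countable union of proper closed subsets of $Y$ under $f_Y$ is again a countable union of proper closed subsets, so the set of $y$ with $y$ very general and $f_Y(y)$ very general is nonempty. Setting $F = \phi^{-1}(y)$ and $F' = \phi^{-1}(f_Y(y))$, the hypothesis forces $f|_F \colon F \to F'$ to be an isomorphism, hence of degree one, and $[F] = [F']$ as classes in $N_n(X)$ since general fibers of $\phi$ are algebraically equivalent. By the projection formula,
$$
(f^{*}A)^{n} \cdot F \;=\; A^{n} \cdot f_{*}F \;=\; A^{n} \cdot F' \;=\; A^{n} \cdot F.
$$

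On the other hand, writing $f^{*}A = (f^{*}A - A) + A$ and expanding yields
$$
(f^{*}A)^{n} \cdot F \;=\; A^{n} \cdot F \;+\; \sum_{k=1}^{n} \binom{n}{k} (f^{*}A - A)^{k} A^{n-k} \cdot F.
$$
For each $k \geq 1$, the restrictions $(f^{*}A - A)|_{F}$ and $A|_{F}$ are both ample on the positive-dimensional projective variety $F$, so the intersection number $(f^{*}A - A)^{k} A^{n-k} \cdot F = \bigl((f^{*}A - A)|_{F}\bigr)^{k} (A|_{F})^{n-k}$ is a top intersection of ample classes on $F$ and is strictly positive. Therefore $(f^{*}A)^{n} \cdot F > A^{n} \cdot F$, contradicting the equality from the previous paragraph.

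The main subtlety in executing this plan is the choice of the base point $y$: one must simultaneously ensure that $y$ is very general (so that $f|_{F_y}$ is an isomorphism and $F_y$ behaves like a general fiber) and that $f_Y(y)$ is very general (so that the image fiber $F'$ is also numerically equivalent to a general fiber), and this double genericity is precisely what the surjectivity of $f_Y$ buys. Everything else is routine: the projection formula gives one value of $(f^{*}A)^{n} \cdot F$, and the ampleness of $f^{*}A - A$ gives a strictly larger value, producing the contradiction.
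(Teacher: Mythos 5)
Your proof is correct, and it reaches the contradiction by a noticeably cleaner route than the paper's. The paper also restricts an ample $H$ with $f^*H-H$ ample to a very general fiber, but it iterates: it argues that $\vol\bigl((f^n)^*H|_{X_y}\bigr)$ must be unbounded in $n$, while the isomorphisms $f^n|_{X_y}\colon X_y\to X_{f_Y^n(y)}$ force these volumes to equal $\vol\bigl(H|_{X_{f_Y^n(y)}}\bigr)$, which is bounded since the target fibers stay general. Your version needs only a single application of $f$: the projection formula together with $f_*[F]=[F']$ and the algebraic equivalence $[F]\equiv[F']$ gives $(f^*A)^n\cdot F=A^n\cdot F$, while the binomial expansion of $\bigl((f^*A-A)+A\bigr)^n$ against the positive-dimensional fiber gives a strict inequality. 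This trades the asymptotic volume estimate (whose "in particular" step — passing from unboundedness of $\vol((f^n)^*H)$ to unboundedness of its restriction to a fiber — the paper leaves implicit) for an exact intersection-theoretic identity. You also make explicit the double-genericity of the base point ($y$ and $f_Y(y)$ both very general, obtained from surjectivity of $f_Y$), which both arguments need and the paper glosses over. The two proofs rest on the same underlying mechanism — ampleness of $f^*A-A$ restricted to a fiber versus rigidity coming from the fiberwise isomorphism — so I would call yours a sharper execution of the same idea rather than a fundamentally different one.
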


\begin{proof}
For the sake of contradiction, we assume that $f$ is an int-amplified endomorphism.
Let $H$ be an ample Cartier divisor on $X$ for which $f^*H-H$ is ample.
Then, we have that the sequence of volumes
$\vol({f^{n}}^*H)$ is unbounded.
In particular, for a very general point $y\in Y$ the sequence of volumes $\vol({{f^n}}^*H|_{X_y})$ is unbounded.
However, the previous sequence agrees with 
\[
\vol\left({{f|^n_{X_y}}}^*\left(H|_{X_{f_Y^n(y)}}\right)\right).
\]
Being $f^n$ an isomorphism on very general fibers, the morphism 
$f|_{X_y}^n$ is an isomorphism. So, the previous volume must be bounded above.
This leads to a contradiction.
\end{proof}

\begin{lemma}\label{lem:degenerate-divisors-pullback}
Let $X$ be a normal projective variety
admitting a fibration $\phi\colon X\rightarrow Y$.
Consider a commutative diagram
\[
\xymatrix{
X\ar[d]_-{\phi}\ar[r]^-{f} & X\ar[d]^-{\phi} \\
Y\ar[r]^-{g} & Y
}
\]
where both $f$ and $g$ are finite morphisms. If $D$ is a $\phi$-exceptional (resp. $\phi$-degenerate) divisor on $X$, then every irreducible component of $g^{-1}(\phi(D))$ is the image of a $\phi$-exceptional (resp. $\phi$-degenerate) divisor on $X$.

Moreover, if $D$ is a $\phi$-exceptional divisor such that $\phi(D)$ is dominated by $k$ prime divisors, then every irreducible component of $g^{-1}(\phi(D))$ is also dominated by $k$ prime divisors.
\end{lemma}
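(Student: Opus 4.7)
The plan is to exploit the commutative square $g\circ \phi = \phi\circ f$ and the finiteness of $f$ and $g$ (which preserves dimensions), combined with the fibration structure of $\phi$, to match prime components of $f^{-1}(D)$ with irreducible components of $g^{-1}(\phi(D))$ via $\phi$. First I would reduce to the case where $D$ is prime, by applying the statement to each prime component of $D$.

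The core step is the set-theoretic equality $\phi(f^{-1}(D)) = g^{-1}(\phi(D))$. The inclusion $\subseteq$ is immediate from commutativity. For $\supseteq$, given any irreducible component $W$ of $g^{-1}(\phi(D))$ and a general point $w\in W$, the point $g(w)$ is general in $\phi(D)$, so $\phi^{-1}(g(w))$ meets $D$. Since $\phi$ is a fibration and $f,g$ are finite surjective, $f$ restricts to a finite surjective morphism $\phi^{-1}(w)\twoheadrightarrow \phi^{-1}(g(w))$ between general fibers of $\phi$ (both irreducible of dimension equal to the generic fiber), and lifting a point of $D\cap \phi^{-1}(g(w))$ through $f$ produces a point of $\phi^{-1}(w)\cap f^{-1}(D)$, so $W\subseteq \phi(f^{-1}(D))$. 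Since $g$ is finite, each prime component $D'$ of $f^{-1}(D)$ has $\phi(D')$ irreducible of dimension $\dim\phi(D)$, hence is an irreducible component of $g^{-1}(\phi(D))$; and every such component is realized this way. Codimensions then transfer directly: if $D$ is $\phi$-exceptional, then $\codim W\ge 2$, so any $D'$ with $\phi(D')=W$ is $\phi$-exceptional; if $D$ is $\phi$-degenerate with witness $E\neq D$ satisfying $\phi(E)=\phi(D)$, then applying the correspondence to $E$ furnishes an $E'$ with $\phi(E')=W$, and $f(D')=D\neq E=f(E')$ forces $D'\neq E'$, making $D'$ a $\phi$-degenerate divisor.

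For the moreover statement, writing $D_1,\ldots,D_k$ for the prime divisors dominating $\phi(D)$ and applying the correspondence to each $D_i$ produces, for every component $W$, pairwise distinct primes $D_1',\ldots,D_k'$ with $\phi(D_i')=W$ and $f(D_i')=D_i$; conversely, any prime $E$ with $\phi(E)=W$ has $f(E)\in\{D_1,\ldots,D_k\}$. To pin down the count as exactly $k$ rather than merely at least $k$, I would pass to the fiber product $Z:=X\times_Y Y$ formed via $\phi$ and $g$, with projections $p_1\colon Z\to X$ and $p_2\colon Z\to Y$, together with the canonical morphism $\tilde f\colon X\to Z$ satisfying $p_1\circ\tilde f=f$ and $p_2\circ\tilde f=\phi$: the primes $E\subset f^{-1}(D_i)$ with $\phi(E)=W$ correspond to components of $\tilde f^{-1}(p_1^{-1}(D_i)\cap p_2^{-1}(W))$, and a dimension count identifies $p_1^{-1}(D_i)\cap p_2^{-1}(W)$ as an irreducible divisor on $Z$, supplying the required uniqueness.

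The main obstacle is precisely this last count-preservation step. Transferring the degeneracy \emph{type} is routine once the correspondence between primes of $f^{-1}(D)$ and components of $g^{-1}(\phi(D))$ is set up, but ruling out the possibility that multiple prime components of $f^{-1}(D_i)$ share the same $\phi$-image $W$ requires the fiber-product bookkeeping sketched above rather than any elementary degree argument.
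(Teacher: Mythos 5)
Your reduction of the first assertion to the set-theoretic identity $\phi(f^{-1}(D))=g^{-1}(\phi(D))$ is the right target, and the $\subseteq$ direction together with the dimension bookkeeping (each component $D'$ of $f^{-1}(D)$ maps onto $D$, so $\phi(D')$ is a component of $g^{-1}(\phi(D))$) is fine. The gap is in your proof of $\supseteq$. You assert that $f$ restricts to a finite \emph{surjective} morphism $\phi^{-1}(w)\twoheadrightarrow\phi^{-1}(g(w))$ ``between general fibers of $\phi$ (both irreducible of dimension equal to the generic fiber).'' But $w$ lies on a component $W$ of $g^{-1}(\phi(D))$ and $g(w)$ lies on $\phi(D)$, which are proper closed subsets of $Y$; these are special fibers. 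Worse, in the degenerate case $\phi^{-1}(g(w))$ is \emph{reducible} for general $g(w)\in\phi(D)$, precisely because at least two prime divisors ($D$ and its witness $E$) dominate $\phi(D)$. Surjectivity of $f\colon\phi^{-1}(w)\to\phi^{-1}(g(w))$ is then exactly the content of the lemma: a priori $f(\phi^{-1}(w))$ could be the union of only those components of $\phi^{-1}(g(w))$ cut out by $E$, missing $D$ entirely, in which case $W$ would not lie in $\phi(f^{-1}(D))$. All that commutativity gives is $\phi^{-1}(g(w))=\bigcup_{w'\in g^{-1}(g(w))}f(\phi^{-1}(w'))$, with no control over which $w'$ hits which component. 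The paper closes precisely this gap by intersection theory: assuming some divisor over $W=Q_1$ maps onto $P_1$ but none onto $P_2$, it takes a curve $C\subset P_1$ in a fiber over a general point of $\phi(D)$ with $P_2\cdot C>0$, lifts it to $C_0$ in the divisor over $Q_1$, and derives the contradiction $0=f^*(P_2)\cdot C_0=P_2\cdot f_*(C_0)>0$. You need an argument of this kind (or an equivalent substitute); pointwise lifting does not suffice.

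The ``moreover'' part has a second, independent gap. Your fiber product is $Z=X\times_Y Y$ formed from $\phi$ and $g$, so $p_1^{-1}(D_i)\cap p_2^{-1}(W)$ is $D_i\times_{\phi(D)}W$, where $g|_W\colon W\to\phi(D)$ is finite of some degree $d$. A dimension count gives that this is pure of dimension $\dim X-1$, but it does \emph{not} give irreducibility: a fiber product of this shape with $d>1$ is typically reducible (already $D_i\times_{\phi(D)}W$ with two isomorphic degree-two covers has two components), so the ``required uniqueness'' does not follow. Note also that the paper's own counting argument is a pigeonhole on the set of \emph{all} prime divisors dominating $W$ combined with the projection formula above, not a count of components of $f^{-1}(D_i)$ over $W$; if you want to recover the exact count you should argue along those lines rather than through the fiber product.
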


\begin{proof}

Suppose $D$ is a $\phi$-exceptional divisor on $X$.
Since $f$ and $g$ are finite morphisms, it is clear that every irreducible component of $g^{-1}(\phi(D))$ is the image of a $\phi$-exceptional divisor on $X$.

Now suppose $D$ is a $\phi$-degenerate divisor on $X$. 
Let $P_1,\ldots,P_k$ be the prime divisors in $X$ dominating $\phi(D),$ and let $Q_1,\ldots,Q_s$ be the irreducible components of $g^{-1}(\phi(D))$. 
For each $i=1,\ldots,s$, let $Q_{i,1},\ldots, Q_{i,j_i}$ be the prime divisors in $X$ dominating $Q_i$, where $j_i\geq 1.$
We want to show that $j_i=k$ for all $i$.
Assume without loss of generality that $j_1<k$, and that $\phi^*(Q_1)$ dominates $P_1$ but not $P_2$. 
Then, over a general point $d \in D$, we may choose a general curve $C$ in the fiber $\phi^{-1}(d)$ such that $C$ is contained in $P_1$ and intersects $P_2$ positively.
Consider the preimage $C'=f^{-1}(C)$ via the finite morphism $f$, which has a component $C_0$ contained in the prime divisor, say $Q_{1,1}$, dominating $P_1.$
By the projection formula, we have $f^*(P_2)\cdot C_0 = P_2\cdot f_*(C_0).$
Since $C_0$ maps to a general point of $Q_1$ while all the other divisors $Q_{i,j}$ for $i\neq 1$ do not map onto $Q_1,$ we have $f^*(P_2)\cdot C_0=0.$
However, this contradicts $P_2\cdot f_*(C_0)>0.$
Therefore, $Q_1$ is the image of a $\phi$-exceptional divisor, specifically, we must have $j_1=k$.
\end{proof}

\begin{lemma}\label{lem:finite-deg-div}
Let $X$ be a $\qq$-factorial variety
and let $\phi\colon X\rightarrow Z$ be a Fano type fibration.
Then $\phi$ has only finitely many degenerate divisors.
\end{lemma}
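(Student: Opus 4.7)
The plan is to split the degenerate prime divisors into their two classes, $\phi$-exceptional and $\phi$-degenerate, and bound each separately using upper-semicontinuity of a suitable fiber invariant; the overall finiteness then follows by combining the two cases.

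For the $\phi$-exceptional divisors, I will invoke upper-semicontinuity of fiber dimension for the proper morphism $\phi$: the locus $Z_j := \{z\in Z \mid \dim \phi^{-1}(z) > r\}$, where $r$ is the relative dimension of $\phi$, is a proper closed subset of $Z$, and by Noetherianity it has only finitely many irreducible components. A prime divisor $D\subset X$ is $\phi$-exceptional precisely when $\overline{\phi(D)}$ is contained in a codimension-$\geq 2$ component of $Z_j$. For each such component $W\subset Z_j$, the preimage $\phi^{-1}(W)$ is a proper closed subset of $X$ with only finitely many irreducible components, of which only finitely many can be codimension-one in $X$. Summing over the finite list of such $W$ gives the desired bound on the number of $\phi$-exceptional prime divisors.

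For the $\phi$-degenerate divisors, I will argue in parallel using upper-semicontinuity of the number of geometrically irreducible components of fibers. Since $\phi$ is a fibration, $\phi_*\mathcal{O}_X=\mathcal{O}_Z$, and in characteristic zero this forces the generic fiber $\phi^{-1}(\eta_Z)$ to be geometrically integral. Hence the locus $U := \{z\in Z \mid \phi^{-1}(z) \text{ is geometrically integral}\}$ is open and dense in $Z$. A $\phi$-degenerate prime divisor $D$ dominates a prime divisor $B\subset Z$ over which at least two prime divisors of $X$ dominate; such $B$ necessarily satisfy $\eta_B \notin U$, so $B \subset Z\setminus U$. The latter is a proper closed subset of $Z$ with only finitely many codimension-one irreducible components, so only finitely many such $B$ exist. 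Since each such $B$ has only finitely many prime divisors of $X$ dominating it (again by Noetherianity of $X$), the total count is finite.

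The main technical ingredient in both parts is the upper-semicontinuity of the relevant fiber invariant, a classical consequence of constructibility for proper morphisms; this is the step requiring the most care, since in Part~2 the number of geometrically irreducible components must be read off from the scheme-theoretic fiber over $\eta_B$ rather than from the set-theoretic fiber over a closed point. Notably, this approach only uses that $\phi$ is a projective fibration in characteristic zero, so the Fano type and $\qq$-factoriality hypotheses are not strictly needed for this finiteness, though they form the natural setting in which the lemma is applied elsewhere in the paper.
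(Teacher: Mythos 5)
Your argument is correct, but it takes a genuinely different route from the paper. The paper's proof is MMP-theoretic: it cites Lai's result that a degenerate divisor $E$ can be contracted by running an $E$-MMP over $Z$, observes that each such contraction drops $\rho(X/Z)$ by one while removing at most two divisors from the degenerate list, and so bounds the number of degenerate divisors linearly in $\rho(X/Z)$; this is exactly where the $\qq$-factoriality (to run the MMP) and Fano type (termination, finiteness of $\rho(X/Z)$) hypotheses enter. Your proof instead traps all degenerate prime divisors inside two fixed proper closed subsets: $\phi^{-1}(Z_j)$ for the $\phi$-exceptional ones (correct: if $\operatorname{codim}\phi(D)\geq 2$ then the general fiber over $\phi(D)$ has dimension $\geq r+1$, so $\phi(D)\subseteq Z_j$ and $D$ is a component of $\phi^{-1}(Z_j)$), and $\phi^{-1}(\overline{Z\setminus U})$ for the $\phi$-degenerate ones (correct: two distinct prime divisors dominating the same prime divisor $B\subset Z$ force $X_{\eta_B}$ to have at least two top-dimensional components, since their intersection has fiber dimension $\leq r-1$ over $B$). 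The trade-off is clear: the paper's argument yields an explicit bound in terms of $\rho(X/Z)$ and stays within the MMP toolkit used throughout, whereas yours is more elementary and, as you note, proves the stronger statement that any fibration of normal projective varieties in characteristic zero has finitely many degenerate divisors, with no positivity or factoriality assumptions. One small caution on your second part: without flatness the geometrically-integral-fiber locus $U$ is only guaranteed to be constructible (EGA IV, 9.7.7), not open; but this suffices, since a constructible set missing the generic point of $Z$ has closure a proper closed subset, and each relevant $B$ has its generic point in $Z\setminus U$, hence lies in that closure, which is all your count requires.
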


\begin{proof}
Let $k$ be the number of degenerate divisors of $X\rightarrow Z$. 
Let $E$ be a degenerate divisor over $Z$.
Then, the $E$-MMP over $Z$ terminates after contracting $E$ (see~\cite[Lemma 2.10]{Lai11}). 
Let $X\dashrightarrow Y$ be such MMP over $Z$.
The number of degenerate divisors of $Y\rightarrow Z$ is at least $k-2$
and $\rho(Y/Z)=\rho(X/Z)-1$.
We conclude that $2k\leq \rho(X/Z)$,
so $k$ is finite.
\end{proof} 

The aim of the following two lemmas is to prove that an int-amplified endomorphism $f\colon A\to A$, with $A$ an abelian variety, fixes no subvariety of $A$.

\begin{lemma}\label{lem:pol-no-fixed}
Let $k$ be a positive integer.
Let $A$ be an abelian variety
and 
$D$ be an ample Cartier divisor on $A$.
Let $f\colon A\rightarrow A$ be an endomorphism
and let 
$Z\subsetneq A$ be a proper subvariety.
Assume that for every $n$
the subvariety $Z_n:=f^{-n}(Z)$ is irreducible and the volume 
$\vol(D|_{Z_n})$ is bounded above by $k$.
Then, the endomorphism $f$ is not polarized.
\end{lemma}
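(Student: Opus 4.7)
The plan is to argue by contradiction: I assume $f$ is polarized, so $f^*D \sim mD$ for some $m \geq 2$, and I will show that the hypothesis that each $Z_n = f^{-n}(Z)$ is irreducible forces $\vol(D|_{Z_n})$ to grow like $m^{n(\dim A - \dim Z)}$, contradicting boundedness by $k$.

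First I would use the structure of endomorphisms of abelian varieties: any such $f$ factors as $f = \tau_b \circ g$, where $g$ is an isogeny and $\tau_b$ is translation by $b = f(0)$. Comparing top self-intersections in $f^*D \sim mD$ yields $\deg(f) = m^{\dim A}$. Because we are in characteristic zero, $g$, and hence $f$, is étale, and so is every iterate $f^n$, of degree $m^{n \dim A}$.

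Next, using étaleness, the scheme-theoretic preimage $(f^n)^{-1}(Z)$ is reduced, so it coincides with $Z_n$. The cycle identity $(f^n)^*[Z] = [Z_n]$ combined with the projection formula $(f^n)_*(f^n)^*[Z] = \deg(f^n)[Z]$ gives $\deg(f^n|_{Z_n}) = m^{n \dim A}$; here the irreducibility of $Z_n$ is essential, as it concentrates the full degree of $f^n$ above $Z$ onto a single component. Applying the projection formula to the finite morphism $f^n|_{Z_n} \colon Z_n \to Z$, together with $(f^n)^*D \sim m^n D$, yields
\[
m^{n \dim Z}(D|_{Z_n})^{\dim Z} = \left((f^n)^*D|_{Z_n}\right)^{\dim Z} = \deg(f^n|_{Z_n})\cdot (D|_Z)^{\dim Z} = m^{n \dim A}(D|_Z)^{\dim Z},
\]
so $\vol(D|_{Z_n}) = m^{n(\dim A - \dim Z)} \vol(D|_Z)$. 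Since $Z \subsetneq A$ we have $\dim A - \dim Z \geq 1$, and $m \geq 2$, so this is unbounded, contradicting the hypothesis.

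The only delicate step is the identification $\deg(f^n|_{Z_n}) = m^{n \dim A}$: it relies both on the characteristic-zero assumption (to guarantee étaleness of $f$) and on the irreducibility of $Z_n$. Without irreducibility the degree would be split among several components and one could only obtain a lower bound involving $\deg(f^n|_{Z_n})$, which a priori could be small; without étaleness one would have to grapple with inseparability contributions to the degree. The remainder of the argument is a routine manipulation of the projection formula together with the defining linear equivalence $f^*D \sim mD$ of a polarized endomorphism.
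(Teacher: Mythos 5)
Your argument is essentially the paper's: both exploit that $f$ is \'etale on an abelian variety, that irreducibility of $Z_n$ forces $\deg(f^n|_{Z_n})=\deg(f^n)$, and then compare this with the degree computed from the projection formula applied to the restricted polarization; you compute the exact growth rate $\vol(H|_{Z_n})=m^{n(\dim A-\dim Z)}\vol(H|_Z)$, whereas the paper reaches the same contradiction by a pigeonhole argument on two indices with equal volumes. Either route works.

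One slip to correct: the hypothesis that $f$ is polarized gives $f^*H\sim mH$ for \emph{some} ample divisor $H$, not for the particular divisor $D$ appearing in the statement, and in general $f^*D$ is not proportional to $D$ (already for $f=[2]$ on an abelian variety and $D$ non-symmetric, $[2]^*D\equiv 3D+(-1)^*D\not\equiv 4D$). So you cannot start from $f^*D\sim mD$. The fix is immediate and is what the paper does: run your computation with $H$ in place of $D$, and observe that since $cD-H$ is ample for some positive integer $c$, the bound $\vol(D|_{Z_n})\le k$ yields $\vol(H|_{Z_n})\le c^{\dim Z}k$ for all $n$, which is what your growth estimate contradicts. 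With that adjustment the proof is complete.
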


\begin{proof}
Assume otherwise that $f$ is a polarized endomorphism.
Let $H$ be an ample Cartier divisor on $A$ such that $f^*H\equiv qH$ with $q>1$.
Then, we have that $\deg(f^\ell)=q^{\ell\dim A}$ for every $\ell\geq 1$.
Let $f_{n,m}\colon Z_n\rightarrow Z_m$ be the restriction of $f^{n-m}$ to $Z_n$
for every $n>m$.
Since $f$ is \'etale, we have that 
$\deg(f_{n,m})=q^{\dim(X)(n-m)}$
for every $n>m$.
Since $\vol(D|_{Z_n})\leq k$ for every $n$, there exists a positive integer
$k'$ such that
$\vol(H|_{Z_n})\leq k'$ for every $n$.
In particular, we can find $n>m$ such that $\vol(H|_{Z_n})=\vol(H|_{Z_m})$.
Note that $(f^{n-m})^*H=q^{n-m}H$.
Hence, we have 
$f_{n,m}^*H|_{Z_m}=q^{n-m}H|_{Z_n}$.
Thus, by taking degree we get
\[
\deg(f_{n,m})\vol(H|_{Z_m})=
q^{\dim(Z)(n-m)}\vol(H|_{Z_n}),
\]
from where we deduce that 
$\deg(f_{n,m})=q^{\dim(Z)(n-m)}=q^{\dim(X)(n-m)}$.
This leads to a contradiction
as $\dim(Z)<\dim(X)$.
\end{proof}

\begin{lemma}\label{lem:int-amplified-abelian-no-fixed-sub}
Let $k$ be a positive integer.
Let $A$ be an abelian variety
and $D$ be an ample Cartier divisor on $A$.
Let $f\colon A\rightarrow A$ be an endomorphism and let $Z\subsetneq A$ be a proper subvariety.
Assume that for every $n\geq 1$
the subvariety $Z_n:=f^{-n}(Z)$ is irreducible and the volume $\vol(D|_{Z_n})$ is bounded above by $k$.
Then, the endomorphism $f$ is not int-amplified.
\end{lemma}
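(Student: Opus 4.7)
\emph{Proof proposal.} The plan is to adapt the polarized-case argument of Lemma~\ref{lem:pol-no-fixed} by replacing the scalar identity $f^*H = qH$ (which has no int-amplified analogue) with an intrinsic cohomological computation on the abelian variety $A$. Throughout, write $g = \dim A$, $d_Z = \dim Z$, and $d = \deg f$.

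First I dispense with the trivial case $d_Z = 0$. Any surjective endomorphism of an abelian variety is étale, being a composition of an isogeny and a translation, and every int-amplified endomorphism has $d \geq 2$. Hence $|f^{-n}(Z)| = d^n$ whenever $Z$ is a point, contradicting the irreducibility of $Z_n$ for $n \geq 1$.

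For $d_Z \geq 1$, the heart of the argument is the explicit formula
\[
\vol(D|_{Z_n}) \;=\; d^n \cdot \bigl((f^*)^{-n}D\bigr)^{d_Z} \cdot [Z].
\]
To derive it, use étaleness of $f$ together with irreducibility of each $Z_n$ to identify $[Z_n] = (f^n)^*[Z]$ in $H^*(A, \mathbb{Q})$, apply the projection formula, and then use the identity $f_* = d \cdot (f^*)^{-1}$ on $H^*(A, \mathbb{Q})$, which is valid for any isogeny of an abelian variety because $f^*$ is there an algebra automorphism of cohomology, so $f_*$ acts as $d$ times the inverse ring homomorphism.

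Now I apply the int-amplified hypothesis: every eigenvalue of $f^*$ on $N^1(A)_{\mathbb{R}}$ has absolute value strictly greater than $1$. Let $\lambda_{\min} > 1$ denote the smallest such absolute value. Combining the AM--GM inequality on the spectrum of $f^*|_{N^1(A)_\mathbb{R}}$ with the determinantal identity $|\det(f^*|_{N^1(A)_\mathbb{R}})| = d^{\rho(A)/g}$ on abelian varieties (which reflects the Rosati structure on $\mathrm{End}^0(A)$) yields $\lambda_{\min} \leq d^{1/g}$, and therefore
\[
\frac{d}{\lambda_{\min}^{d_Z}} \;\geq\; d^{(g-d_Z)/g} \;>\; 1 \quad \text{whenever } d_Z < g.
\]
Decomposing $D$ in a generalized eigenbasis of $f^*|_{N^1_{\mathbb{R}}}$ and using ampleness of $D$ together with the nontriviality of the algebraic cycle $[Z]$, one checks that the factor $\bigl((f^*)^{-n}D\bigr)^{d_Z} \cdot [Z]$ has nonvanishing leading asymptotics of order $\lambda_{\min}^{-n d_Z}$. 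Thus the right-hand side of the displayed formula grows like $(d/\lambda_{\min}^{d_Z})^n \to \infty$, contradicting the bound $\vol(D|_{Z_n}) \leq k$.

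The main obstacles are (i) the determinantal identity used to bound $\lambda_{\min}$, which is the abelian-variety input coming from the Rosati involution on $\mathrm{End}^0(A)$, and (ii) the non-vanishing of the leading coefficient in the asymptotic expansion of $\bigl((f^*)^{-n}D\bigr)^{d_Z} \cdot [Z]$. The latter is a positivity matter that can be handled by a case analysis on the relative position of $D$ and $[Z]$ in the eigendecomposition of $f^*|_{N^1_{\mathbb{R}}}$, and if necessary by replacing $D$ with a nearby ample class, noting that the boundedness hypothesis $\vol(D|_{Z_n}) \leq k$ for one ample $D$ implies an analogous bound for any other ample divisor.
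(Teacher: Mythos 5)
Your reduction to the formula
\[
\vol(D|_{Z_n}) \;=\; d^{\,n}\cdot\bigl((f^*)^{-n}D\bigr)^{d_Z}\cdot[Z]
\]
is correct (étaleness gives $[Z_n]=(f^n)^*[Z]$, and $f_*=d\,(f^*)^{-1}$ on $H^*(A,\qq)$ is valid), and the $d_Z=0$ case is fine. But the argument has a genuine gap at the step you yourself flag as an ``obstacle,'' and the proposed fix does not close it. Expanding $D=\sum_i D_i$ into generalized eigencomponents, the quantity $\bigl((f^*)^{-n}D\bigr)^{d_Z}\cdot[Z]$ is a sum of terms $\bigl(\prod_j\lambda_{i_j}^{-n}\bigr)\,D_{i_1}\cdots D_{i_{d_Z}}\cdot[Z]$, and its true decay rate is $\mu^{-n}$ where $\mu$ is the \emph{smallest} product $\prod_j|\lambda_{i_j}|$ over multi-indices whose coefficient $D_{i_1}\cdots D_{i_{d_Z}}\cdot[Z]$ is nonzero --- not $\lambda_{\min}^{d_Z}$. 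There is no reason the $\lambda_{\min}$-component pairs nontrivially with $[Z]$: already on $A=E_1\times E_2\times E_3$ with pairwise non-isogenous factors and $f$ acting diagonally, a subvariety $Z$ aligned with the factor carrying the largest eigenvalue satisfies $D_{\lambda_{\min}}^{d_Z}\cdot[Z]=0$ for \emph{every} ample $D$, since the vanishing is forced by the cohomological position of $[Z]$; perturbing $D$ inside the ample cone cannot create a nonzero pairing that vanishes identically as a polynomial in $D$. What your argument actually needs is $\mu<d$, and this does not follow from $\lambda_{\min}\le d^{1/g}$: eigenvalue configurations with $g\ge 3$ and $d_Z\ge 2$ admit contributing multi-indices with $\prod_j|\lambda_{i_j}|=d$ (e.g.\ analytic eigenvalues with $|\lambda_3|=|\lambda_1|^2=|\lambda_2|^2$). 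Since $\vol(D|_{Z_n})\ge 1$ forces $\mu\le d$, ruling out the boundary case $\mu=d$ is exactly equivalent to the boundedness you are trying to contradict, so at this point the argument is circular. (Separately, the determinantal identity $|\det(f^*|_{N^1})|=d^{\rho/g}$ is asserted with only a gesture at the Rosati involution; it appears to be true, but it is a nontrivial claim that would need a proof or a reference, and Jordan blocks of $f^*|_{N^1}$ would also need to be addressed.)

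For comparison, the paper takes an entirely different route that sidesteps this positivity problem: after passing to an isogenous product of simple abelian varieties and lifting $f$, it finds an effective (hence semiample) divisor class $E$ with $f^*E\equiv qE$, $q>1$, whose ample model is a projection $A\to A_0$ onto a sub-product on which the induced endomorphism is \emph{polarized}; it then either applies the polarized-case Lemma~\ref{lem:pol-no-fixed} to the images of the $Z_n$ (when they are vertical over $A_0$) or restricts to a general fiber and inducts on dimension (when they are horizontal). If you want to salvage your cohomological approach, the cleanest repair is different again: boundedness of $D^{d_Z}\cdot[Z_n]$ plus salience of the pseudo-effective cone of cycles forces $[Z_n]=[Z_m]$ for some $n>m$, hence a cohomologically totally invariant subvariety, which is excluded for int-amplified endomorphisms of abelian varieties --- but that exclusion is itself the substantive input and must be proved or cited.
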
 

\begin{proof}
The statement holds in dimension one, so we proceed by induction on the dimension.

For the sake of contradiction 
assume that $f$ is int-amplified.
Let $g\colon A'\rightarrow A$ be an isogeny such that $A'$ is a product of simple abelian varieties.
By~\cite[Theorem 3.1]{MYY24}, up to replacing $f$ with an iteration and $A'$ with a finite cover of $A'$, we have a commutative diagram:
\[
\xymatrix{
A'\ar[d]_-{g}\ar[r]^-{f'} & A'\ar[d]^-{g} \\ 
A\ar[r]^-{f} & A \\
}
\]
where $f'$ is an int-amplified endomorphism.
A priori the subvarieties $W_n:=g^{-1}(Z_n)$ may not be irreducible.
However, the number of irreducible components of $W_n$ is bounded above by $\deg g$.
On the other hand, we have $\vol(g^*D|_{W_n})\leq \deg g k$.
Thus, we may replace $A$ with $A'$ and each $Z_n$ with a suitable irreducible component of $W_n$.
By doing so, we may assume that $A$ is itself a product of simple abelian varieties.

From now on, we assume that $A$ is a product of simple abelian varieties.
The endomorphism $f$ fixes the ample cone of $A$.
Hence, there exists a pseudo-efffective divisor $E$ such that $f^*E \equiv qE$ for some $q\in \qq$.
By~\cite[Page 802, Proposition 0.5]{Mor15}, we may assume that $E$ is indeed an effective divisor.
In particular, the divisor $E$ is semiample. 
By~\cite[Theorem 3.1]{Men17}, we know that $q>1$.
Let $\phi \colon A\rightarrow A_0$ be the ample model of $E$. In particular, we know that $A\rightarrow A_0$ is the projection onto the product of some of the simple abelian components of $A$.
We have that $E\equiv \phi^*E_0$ for some ample divisor $E_0$ on $A_0$.
Then, we get a commutative 
\[
\xymatrix{
A\ar[d]_-{\phi} \ar[r]^-{f} & A\ar[d]^-{\phi} \\
A_0\ar[r]^-{f_0} & A_0
}
\]
where $f_0$ is a polarized endomorphism.
Indeed, we get $f_0^* E_0\equiv qE_0$.
If $Z_n$ is vertical over $A_0$ for infinitely many $n$, then
we get a contradiction by Lemma~\ref{lem:pol-no-fixed}.
If $Z_n$ is horizontal over $A_0$ for infinitely many $n$, then $\dim(A_0)<\dim A$.
Let $F$ be the general fiber of $\phi$.
In this case, the number of components of 
$Z_n\cap F$ and $\vol(D|_{Z_n\cap F})$ are bounded above, independently of $n$.
Therefore, we get a contradiction by induction on the dimension by restricting to $F$.
Thus, we conclude that $f$ is not int-amplified.
\end{proof} 

\section{Bounded finite quotients}
\label{sec:bounded-finite-quot}

In this section, we study sequences of finite quotients $X\rightarrow X/G_i$ such that the images belong to a bounded family.
The boundedness allows us to deduce information about the groups $G_i$ as well as the variety $X$.
The results in this section are similar to those in~\cite{MYY24}. However, in those results, we assume that all quotients were isomorphic. 

\begin{definition}
{\em We say that a family of pairs $\mathcal{F}$ is {\em log bounded} if there exists a scheme of finite type $T$ over the base field satisfying the following condition: There is a projective morphism
$\pi\colon \mathcal{X} \rightarrow T$ with a divisor $\mathcal{B}$ on $\mathcal{X}$ such that for every $(X,B)\in \mathcal{F}$ we have $(X,B)\simeq (\mathcal{X}_t,\mathcal{B}_t)$ for a suitable $t\in T$.

We say that a family of varieties $\mathcal{F}$ is {\em bounded} if the family 
$\{ (X,0)\mid X\in \mathcal{F} \}$ is a log bounded family of pairs.
}
\end{definition}

\begin{lemma}\label{lem:finite-covers}
Let $\mathcal{F}$ be a log bounded family of log Calabi--Yau pairs $(X,\Delta)$ 
with $K_X+\Delta\sim 0$
and $d$ be a positive integer.
Let $\mathcal{G}$ be the family of log pairs 
$(Y,\Delta_Y)$, with $\Delta_Y\geq 0$, for which there exists a finite morphism $f\colon Y\rightarrow X$ of degree at most $d$ with $f^*(K_X+\Delta)=K_Y+\Delta_Y$ for some $(X,\Delta)\in \mathcal{F}$.
Then, the family $\mathcal{G}$ is log bounded.
\end{lemma}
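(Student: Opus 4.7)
The plan is to parametrize $\mathcal{G}$ using the relative Quot scheme of $\mathcal{O}_\mathcal{X}$-algebras together with the relative Hilbert scheme of divisors, after arranging a uniformly bounded polarization on the covers. Let $\pi\colon \mathcal{X}\to T$ together with a divisor $\mathcal{B}$ realize the log boundedness of $\mathcal{F}$, with $T$ of finite type and $\mathcal{L}$ a relatively very ample line bundle on $\mathcal{X}/T$. After stratifying $T$, I may assume $\chi(\mathcal{X}_t,\mathcal{L}_t^m)$ is independent of $t\in T$; in particular the top self-intersection $\mathcal{L}_t^n = D$ is uniformly bounded and the coefficients of $\mathcal{B}$ lie in a finite set.

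For any $(Y,\Delta_Y,f)\in\mathcal{G}$ with target $(X_t,\mathcal{B}_t)$, the coherent sheaf $\mathcal{A} := f_*\mathcal{O}_Y$ is an $\mathcal{O}_{X_t}$-algebra of generic rank $d'\leq d$. The line bundle $f^*\mathcal{L}_t$ is ample on $Y$ with $(f^*\mathcal{L}_t)^n = d'\cdot D$ uniformly bounded, and $(Y,\Delta_Y)$ is log canonical since finite pull-backs of lc pairs are sub-lc. Standard boundedness for finite covers of bounded degree over a bounded family then yields a uniform bound on the Castelnuovo--Mumford regularity of $\mathcal{A}$ with respect to $\mathcal{L}_t$, so $\mathcal{A}$ lies in finitely many relative Quot schemes $\operatorname{Quot}^P(\mathcal{O}_\mathcal{X}^N/\mathcal{X}/T)$ for a fixed $N$ and finitely many Hilbert polynomials $P$. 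Imposing the $\mathcal{O}_{X_t}$-algebra structure on the quotient (a closed condition) and the normality of $Y = \Spec_{X_t}(\mathcal{A})$ (a constructible condition) cuts out a finite-type parameter space for the pair $(Y,f)$.

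It remains to parametrize $\Delta_Y$. Riemann--Hurwitz gives
\[
\coeff_Q(\Delta_Y) \;=\; 1 - r_Q\bigl(1 - \coeff_{f(Q)}(\mathcal{B}_t)\bigr)
\]
for each prime component $Q$ of $\Delta_Y$, where $r_Q\leq d$ is the ramification index. Since the coefficients of $\mathcal{B}_t$ lie in a finite set along $\mathcal{F}$, the coefficients of $\Delta_Y$ lie in a finite set depending only on $\mathcal{F}$ and $d$. Moreover $\Supp(\Delta_Y)\subseteq f^{-1}(\Supp\,\mathcal{B}_t)\cup \Supp(R_f)$, whose $f^*\mathcal{L}_t^{\,n-1}$-degree is uniformly bounded, so $\Delta_Y$ occurs in a finite union of components of the relative Hilbert scheme of divisors on the universal cover. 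Combining the two steps, $\mathcal{G}$ is parametrized by a scheme of finite type, which gives the desired log boundedness.

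The main obstacle I anticipate is the uniform bound on the Castelnuovo--Mumford regularity of $\mathcal{A} = f_*\mathcal{O}_Y$ with respect to $\mathcal{L}_t$, needed to contain $\mathcal{A}$ in finitely many Quot schemes rather than a countable union. Once granted---either via an effective Matsusaka-type result applied to $(Y,f^*\mathcal{L}_t)$, or by a direct Hilbert-polynomial count using $\chi(f^*\mathcal{L}_t^m) = \chi(\mathcal{A}\otimes\mathcal{L}_t^m)$ combined with vanishing coming from the log Calabi--Yau structure $K_Y+\Delta_Y\sim 0$---the remainder is a formal application of Quot/Hilbert-scheme boundedness.
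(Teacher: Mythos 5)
Your route through Quot and Hilbert schemes is genuinely different from the paper's, and it stalls exactly at the point you flag. The step ``standard boundedness for finite covers of bounded degree over a bounded family yields a uniform bound on the Castelnuovo--Mumford regularity of $f_*\mathcal{O}_Y$'' is not a citable standard fact; the assertion that such covers form a bounded family is essentially the content of the lemma itself, so invoking it is circular. What you do get for free is $H^i(X_t,\mathcal{A}\otimes\mathcal{L}_t^m)=H^i(Y,f^*\mathcal{L}_t^m)=0$ for $i>0$ and $m\geq 1$ (finiteness of $f$ plus a Fujino-type vanishing using $K_Y+\Delta_Y\sim 0$ and ampleness of $f^*\mathcal{L}_t$), but uniform $m$-regularity also requires control of $h^i(Y,f^*\mathcal{L}_t^{j})$ for the relevant $j\leq 0$ (e.g.\ $h^1(Y,\mathcal{O}_Y)$) and global generation of $\mathcal{A}\otimes\mathcal{L}_t^m$, none of which is supplied. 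Until that is done you only know that $\mathcal{A}$ lies in a countable union of Quot schemes, which does not give a finite-type parameter space.

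The paper avoids this entirely with a short argument worth comparing with yours: since $\mathcal{F}$ is log bounded, one can choose on each $(X,\Delta)\in\mathcal{F}$ a reduced ample divisor $H$ with $\vol(H)$ uniformly bounded and $(X,\Delta+H)$ log canonical. Then $(Y,\Delta_Y+f^*H)$ is log canonical, $K_Y+\Delta_Y+f^*H\sim_{\qq} f^*H$ is ample, and $\vol(f^*H)=\deg(f)\cdot\vol(H)\leq d\cdot\vol(H)$ is uniformly bounded, while the coefficients of $\Delta_Y$ lie in a finite (in particular DCC) set by the same Riemann--Hurwitz computation you made. Then \cite[Theorem 1.2.1]{HMX18} gives log boundedness of the pairs $(Y,\Delta_Y+f^*H)$, hence of $\mathcal{G}$. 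If you want to salvage your approach instead, the cleanest substitute for the missing regularity bound is to note that $\Delta_Y\geq 0$ forces the branch divisor of $f$ to be contained in $\Supp\Delta$, so the covers are governed by index-at-most-$d$ subgroups of the fundamental group of the complement of $\Supp\mathcal{B}_t\cup\operatorname{Sing}(\mathcal{X}_t)$, of which there are finitely many; one then spreads this out over $T$ by Noetherian induction. But that is a different argument from the one you wrote, and as written your proof has a genuine gap.
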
 

\begin{proof}
Since $\mathcal{F}$ is a log bounded family,
for each element $(X,\Delta) \in \mathcal{F}$, we may find a reduced ample effective divisor $H$ with $\vol(H)\leq v(\mathcal{F})$, 
where $v(\mathcal{F})$ is a positive number only depending on $\mathcal{F}$,
such that $(X,\Delta+H)$ is log canonical.
In particular, we get that $(Y,\Delta_Y+f^*H)$ is log canonical for every log pair $(Y,\Delta_Y)\in \mathcal{G}$.
Note that $K_Y+\Delta_Y+f^*H$ is ample
and $\vol(K_Y+\Delta+f^*H)=\vol(f^*H)\leq d^{m(\mathcal{F})}\vol(H)$ where $m(\mathcal{F})$ is a positive integer that only depends on $\mathcal{F}$.
By~\cite[Theorem 1.2.1]{HMX18}, we conclude that the log pairs $(Y,\Delta_Y+f^*H)$ belong to a log bounded family.
In particular, the family $\mathcal{G}$ is log bounded.
\end{proof} 

The proof of the following lemma is similar to that of~\cite[Lemma 4.1]{MYY24}. 

\begin{lemma}\label{lem:bounded-quot-non-triv-boundary}
Let $\mathcal{F}$ be a log bounded family of log Calabi--Yau pairs.
Let $(X,\Delta)$ be a log Calabi--Yau pair with $K_X+\Delta \sim 0$. 
Let $\{G_i\}_{i\in \nn}$ be an infinite nested sequence of finite subgroups with $G_i\leqslant \mathbb{T}\leqslant {\rm Aut}^0(X,\Delta)$
for each $i$.
Here, $\mathbb{T}$ denotes an algebraic torus.
If $(X/G_i,\Delta/G_i)\in \mathcal{F}$ holds for every $i\in \nn$, then 
$\Delta \neq 0$.
\end{lemma}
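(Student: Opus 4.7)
I will argue by contradiction: assume $\Delta=0$, so that the log Calabi--Yau condition reduces to $K_X\sim 0$. Since the nested sequence $\{G_i\}$ is genuinely infinite, $|G_i|\to\infty$; replacing $\mathbb{T}$ by the Zariski closure of $\bigcup_i G_i$, I may assume $\mathbb{T}$ itself is positive-dimensional and that the $G_i$ are Zariski dense in $\mathbb{T}$.

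The key observation is that with $\Delta=0$ each boundary $\Delta/G_i$ is the \emph{orbifold} boundary arising purely from ramification of the quotient $X\to X/G_i$ along the $G_i$-fixed prime divisors of $X$. Because the $G_i$ are Zariski dense in $\mathbb{T}$, these $G_i$-fixed divisors coincide with the (finitely many) $\mathbb{T}$-fixed prime divisors $D_1,\ldots,D_k$; at $D_j$ the ramification index is $r_{i,j}=[G_i:G_i\cap\ker\chi_j]$, where $\chi_j\colon\mathbb{T}\to\mathbb{G}_m$ is the character of $\mathbb{T}$ on $N_{D_j/X}$, and the coefficient of the image of $D_j$ in $\Delta/G_i$ is $1-1/r_{i,j}$. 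Now I invoke log boundedness of $\mathcal{F}$: the coefficients of boundaries appearing in $\mathcal{F}$ lie in a finite set, hence the ramification orders $r_{i,j}$ are uniformly bounded in $i$ for every fixed $j$. Combined with $|G_i|\to\infty$, this forces $|G_i\cap\ker\chi_j|\to\infty$ for each $j$, so $\mathbb{T}_0:=\bigcap_{j}\ker\chi_j\leq \mathbb{T}$ is a positive-dimensional subtorus.

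Finally, I would derive the contradiction by analyzing the induced $\mathbb{T}_0$-action. Taking a $\mathbb{T}_0$-equivariant modification $\pi\colon\tilde X\to X$ which turns the Chow quotient into a morphism $\tilde\phi\colon\tilde X\to Z:=X/\!/\mathbb{T}_0$, the general fiber $F$ of $\tilde\phi$ is a positive-dimensional projective toric variety, and adjunction applied to the crepant pullback $K_{\tilde X}+\tilde\Delta=\pi^*(K_X+\Delta)\sim_{\qq}0$ gives $\tilde\Delta|_F\sim_{\qq}-K_F$, which is the non-trivial toric anticanonical divisor of $F$. However, every $\mathbb{T}$-fixed divisor of $X$ is $\mathbb{T}_0$-fixed, hence vertical for $\tilde\phi$ and contributing nothing to $\tilde\Delta|_F$, while the $\pi$-exceptional components of $\tilde\Delta$ have coefficients bounded strictly below $1$ by the ramification bound above. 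Therefore $\tilde\Delta|_F$ must receive a contribution from a horizontal prime divisor of $\tilde\phi$ lying in the strict transform of $\Delta$, contradicting $\Delta=0$.

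\medskip

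The principal obstacle is the final adjunction/positivity step: one must carefully track how $\tilde\Delta|_F$ decomposes into horizontal, vertical, and exceptional contributions, and verify that the bounded exceptional coefficients together with the verticality of the $\mathbb{T}$-fixed divisors truly preclude the realization of the full toric anticanonical $-K_F$ on $F$ without drawing on a horizontal piece of $\Delta$ itself.
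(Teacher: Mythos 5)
Your proposal shares its opening move with the paper's proof (pass to a $\mathbb{T}$-equivariant modification $\pi\colon\widetilde X\to X$ with a quotient fibration $\widetilde X\to Z$ and study the crepant pullback $K_{\widetilde X}+\widetilde\Delta=\pi^*K_X$ on the toric general fiber $F$), but the final contradiction does not go through. The coefficients of $\widetilde\Delta$ are the numbers $1-a_E(X,0)$, i.e.\ they are determined by the log discrepancies of $(X,0)$; they have nothing to do with the ramification indices of the quotient maps $X\to X/G_i$, so the assertion that the $\pi$-exceptional components have coefficients ``bounded strictly below $1$ by the ramification bound'' is a non-sequitur. Since $(X,0)$ is only assumed log canonical, $\widetilde\Delta$ can have exceptional components with coefficient exactly $1$; indeed, making your own toric computation precise ($\widetilde\Delta|_F$ is $\mathbb{T}$-invariant, hence of the form $\sum a_iD_i$ with $a_i-1=\langle m,v_i\rangle$, which forces some $a_i\geq 1$ because the fan of $F$ is complete) shows such a horizontal coefficient-one component \emph{must} exist. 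So the relation $\widetilde\Delta|_F\sim_{\qq}-K_F$ is realized by exceptional log canonical places and yields no contradiction with $\Delta=0$. This is exactly where the paper's proof \emph{starts}: it produces the horizontal component $S$ of $\widetilde\Delta^{=1}$, takes the one-dimensional subtorus $\mathbb{T}_0$ acting trivially on $S$, and then derives the contradiction from boundedness of the quotients via a multiplicity estimate ($\mu(X/H_i;y_i)\to\infty$ for $H_i=G_i\cap\mathbb{T}_0$, using \cite[Lemma 2.8]{MYY24}), together with a case division and descent on the rank of $\mathbb{T}$ when $|H_i|$ stays bounded. That entire mechanism is what is missing from your argument.

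Two further points. First, your identification of the branch locus is not correct as stated: a finite subgroup $G_i\leqslant\mathbb{T}$ can pointwise fix a divisor that the whole torus does not fix (e.g.\ $\mathbb{G}_m$ acting on $\pp^2$ with weights $(0,1,2)$: $\mu_2$ fixes $\{y=0\}$ pointwise), and the inference from ``$|G_i\cap\ker\chi_j|\to\infty$ for each $j$'' to ``$\bigcap_j\ker\chi_j$ is positive-dimensional'' fails (take $G_i=\mu_{n_i}\times\mu_{n_i}\leqslant\mathbb{G}_m^2$ with the coordinate characters); the latter can be repaired via $[G_i:\bigcap_j(G_i\cap\ker\chi_j)]\leq\prod_j r_{i,j}$, but you would need to say so. Second, your ramification-coefficient step presupposes that $\Delta/G_i$ denotes the full crepant (orbifold) boundary of the quotient; the paper's proof deliberately avoids this and uses only that the underlying varieties $X/G_i$ lie in a bounded family, via a uniform bound on local multiplicities --- which is the quantity that actually detects the large subgroups of $\mathbb{T}_0$ in codimension $\geq 2$, where no divisorial ramification is visible.
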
 

\begin{proof}
Let $n$ be the dimension
of the variety $X$.
Assume that the statement does not hold, meaning that $X/G_i \in \mathcal{F}$ for every $i$ and $\Delta=0$.
Since the sequence of varieties $X/G_i$ belongs to a bounded family, there exists a constant $m_0$ such that $\mu(X/G_i;x)\leq m_0$ for every closed point $x\in X/G_i$.
Let $\pi\colon \widetilde{X}\rightarrow X$ 
be a $\mathbb{T}$-equivariant projective birational morphism such that $\widetilde{X}$ admits a quotient for the $\mathbb{T}$-action. Thus, there is a $\mathbb{T}$-equivariant fibration 
$\widetilde{X}\rightarrow Z$
which is a quotient for the torus action.
Let $\pi^*(K_X)=K_{\widetilde{X}}+
\widetilde{\Delta} \sim 0$.
The sub-pair $(\widetilde{X},\widetilde{\Delta})$
is sub-log Calabi--Yau
and its restriction to a general fiber of $\widetilde{X}\rightarrow Z$ is a toric log Calabi--Yau pair.
Then, there is a prime component $S$ of $\widetilde{\Delta}^{=1}$ that dominates $Z$.
Let $\mathbb{G}_m=:\mathbb{T}_0\leqslant \mathbb{T}$ be the one-dimensional torus that acts as the identity on $S$.
Let $\mathbb{T}_1$ be a split torus of $\mathbb{T}_0$ in $\mathbb{T}$.
For each $i$, we denote by $H_i$ the intersection of $G_i$ with $\mathbb{T}_0$.
There are two cases; either the order of $H_i$ diverges or is bounded above by a constant which is independent of $i$.

First, assume that the order of $H_i$ diverges.
Let $\pi(S)\subset X$.
By~\cite[Theorem 10.1]{AH06}, the variety $\pi(S)$ admits a subvariety $S_0$ on which $\mathbb{T}_1$ acts faithfully.
Let $x\in S_0$ be a general point.
Then, the pair $(X;x)$ is a normal singularity with a $\mathbb{T}_0$-action. Let $x_i$ be the image of $x$ in $X/G_i$
and $y_i$ be the image of $x$ in $X/H_i$. 
Then, we have
$(X/G_i;x_i)\simeq (X/H_i;y_i)$.
By Lemma~\cite[Lemma 2.8]{MYY24}, we know that $\lim_{i\rightarrow \infty} \mu(X/H_i;y_i)=\infty$.
This contradicts the fact that $\mu(X/G_i;x_i)\leq m_0$.
We conclude that $\Delta \neq 0$.

Now, assume that the order of $H_i$ is bounded above by $d$. Let $K_i$ be the intersection of $G_i$ with $\mathbb{T}_1$. 
Then, the groups $\{K_i\}_{\in \nn}$ form an infinite nested sequence of finite groups with $K_i\leqslant \mathbb{T}_1 \leqslant {\rm Aut}^0(X,\Delta)$. 
The quotients $(X/K_i,\Delta/K_i)\rightarrow (X/G_i,\Delta/G_i)$ have degree at most $d$, so the pairs $(X/K_i,\Delta/K_i)$ are still in a bounded family $\mathcal{F}'$ that only depends on $\mathcal{F}$ and $d$ (see Lemma~\ref{lem:finite-covers}).
We replace $\{G_i\}_{i\in \nn}, \mathbb{T}$, and $\mathcal{F}$ with $\{K_i\}_{i \in \nn}, \mathbb{T}_1$, and $\mathcal{F}'$,
respectively.
By doing so, the rank of $\mathbb{T}$ decreases by one. Thus, this replacement can only happen finitely many cases. Hence, eventually, we are in the situation of the previous paragraph.
\end{proof} 

\begin{theorem}\label{thm:toric-bounded-quot}
Let $\mathcal{F}$ be a log bounded family of log Calabi--Yau pairs. 
Let $(X,\Delta)$ be a $n$-dimensional log Calabi--Yau pair with $K_X+\Delta\sim 0$.
Let $\{G_i\}_{i\in \nn}$ be an infinite nested sequence of finite subgroups of $\mathbb{T}\leqslant {\rm Aut}(X,\Delta)$.
Here, $\mathbb{T}$ is an algebraic torus.
Assume the following conditions hold:
\begin{enumerate}
\item For each log canonical center $W$ of $(X,\Delta)$, the restriction of $\{G_i\}_{i\in \nn}$ to $W$ gives an infinite nested sequence of finite actions, and 
\item If $(W^\nu,\Delta_{W^\nu})$ is the pair obtained by adjunction of $(X,\Delta)$ to the normalization $W^\nu$ of $W$, then 
\[
(W^\nu/G_i,\Delta_{W^\nu}/G_i)\in \mathcal{F}
\]
for each $i\in \nn$.
\end{enumerate} 
Then, the following conditions are satisfied:
\begin{enumerate}
\item[(i)] the algebraic torus $\mathbb{T}$ has rank $n$, 
\item[(ii)] for $i$ large enough, the group $G_i$ has rank $n$, and
\item[(iii)] the log Calabi--Yau pair $(X,\Delta)$ is a toric log Calabi--Yau pair.
\end{enumerate} 
\end{theorem}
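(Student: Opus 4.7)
The plan is to proceed by induction on $n = \dim X$, with $n = 0$ trivial and $n = 1$ handled directly via boundedness of the $\mathbb{G}_m$-orbit-closure structure.

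First I would apply Lemma~\ref{lem:bounded-quot-non-triv-boundary} to $(X,\Delta)$ itself, treating $X$ as a trivial log canonical center of itself so that condition (2) yields $(X/G_i,\Delta/G_i)\in\mathcal{F}$. Since the torus $\mathbb{T}$ is connected, $\mathbb{T}\leqslant\operatorname{Aut}^0(X,\Delta)$, and the lemma forces $\Delta\neq 0$. To produce a proper log canonical center on which to run the induction, I would follow the argument in the proof of that lemma: take a $\mathbb{T}$-equivariant log resolution $\widetilde{X}\to X$ admitting a $\mathbb{T}$-quotient $\widetilde{X}\to Z$, and extract a prime divisor $S$ of coefficient one in the sub-boundary $\widetilde{\Delta}$ that dominates $Z$. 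Its image in $X$ is a proper lcc of $(X,\Delta)$ on which, by condition (1), the sequence $\{G_i|_S\}$ is an infinite nested family of finite subgroups of the restricted torus $\mathbb{T}|_{S^\nu}$.

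Next I would apply the inductive hypothesis to the adjunction pair $(S^\nu,\Delta_{S^\nu})$. Condition (2) gives $(S^\nu/G_i,\Delta_{S^\nu}/G_i)\in\mathcal{F}$, and the hypotheses of the theorem transfer: the log canonical centers of $(S^\nu,\Delta_{S^\nu})$ are exactly the lccs of $(X,\Delta)$ contained in $S$, so condition (1) is inherited. Induction yields that $(S^\nu,\Delta_{S^\nu})$ is a toric log Calabi--Yau pair of dimension $n-1$, that $\mathbb{T}|_{S^\nu}$ has rank $n-1$, and that $G_i|_{S^\nu}$ has rank $n-1$ for $i\gg 0$.

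The final step is to upgrade these statements from $S^\nu$ to $X$. We immediately obtain $\operatorname{rank}\mathbb{T}\geq n-1$. If $\operatorname{rank}\mathbb{T}=n$, then (i) is clear and (iii) follows from the classification of $n$-dimensional varieties carrying a faithful action of an $n$-torus together with $K_X+\Delta\sim 0$; claim (ii) follows by combining the inductive (ii) on $S^\nu$ with the observation that, after replacing $\mathbb{T}$ by the Zariski closure of $\bigcup_i G_i$, the kernel of $\mathbb{T}\twoheadrightarrow\mathbb{T}|_{S^\nu}$ is a one-parameter subgroup which therefore must meet the $G_i$ in subgroups of unbounded order. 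The degenerate case $\operatorname{rank}\mathbb{T}=n-1$ must be excluded: there $\mathbb{T}$ would act faithfully on $X$ with $(n-1)$-dimensional general orbits, producing a rational quotient $X\dashrightarrow B$ of relative dimension $n-1$; I would rerun the divergence-of-multiplicities argument of Lemma~\ref{lem:bounded-quot-non-triv-boundary} using a suitable one-parameter subgroup of $\mathbb{T}$ whose fixed locus meets a general fiber of $X\dashrightarrow B$ nontrivially, together with condition (1) applied to a zero-dimensional sub-lcc of $S$, to contradict the boundedness of $\{X/G_i\}$ in $\mathcal{F}$. Ruling out this case is the main obstacle: it is not a formal consequence of the inductive hypothesis and requires a careful equivariant geometric analysis near $S$ that refines the argument of Lemma~\ref{lem:bounded-quot-non-triv-boundary}.
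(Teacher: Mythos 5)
Your overall strategy coincides with the paper's: induct on dimension, use Lemma~\ref{lem:bounded-quot-non-triv-boundary} to get $\Delta\neq 0$, pass to a boundary stratum by adjunction, and exclude $\operatorname{rank}\mathbb{T}=n-1$ by a divergence-of-multiplicities argument. But the step you yourself flag as ``the main obstacle'' is precisely the content of the theorem beyond the inductive hypothesis, and you do not carry it out. The paper does it as follows: assuming rank $n-1$, take a $\mathbb{T}$-equivariant birational model $\widetilde X\to X$ admitting a quotient $\widetilde X\to Z$ for the torus action, and choose a component $S$ of $\widetilde\Delta^{=1}$ dominating the general fiber. Then $\mathbb{T}_S:=\mathbb{T}|_S$ has rank $n-2$, so by induction the image $\pi(S)\subset X$ must have codimension two; the kernel $\mathbb{T}_0$ of $\mathbb{T}\twoheadrightarrow\mathbb{T}_S$ is a one-parameter subgroup acting trivially near the generic point of $\pi(S)$, and with $\mu_i:=G_i\cap\mathbb{T}_0$ one has $\mu(X/G_i;x_i)=\mu(X/\mu_i;y_i)$ at a general point of $\pi(S)$, which diverges by \cite[Lemma 2.8]{MYY24} once $|\mu_i|\to\infty$, contradicting boundedness of the quotients.

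The second gap is what makes your sketch of that step (and your justification of claim (ii)) not salvageable as written: you need $|G_i\cap\mathbb{T}_0|\to\infty$ for the \emph{specific} subtorus $\mathbb{T}_0$, and this does not follow from replacing $\mathbb{T}$ by the Zariski closure of $\bigcup_iG_i$. For instance $G_i=\mu_{p^i}\times\mu_{q^i}\leqslant\mathbb{G}_m^2$ with $p\neq q$ primes is Zariski dense but meets the diagonal one-parameter subgroup trivially for every $i$. The paper secures this with a preliminary reduction (its first paragraph): if $G_i\cap\mathbb{T}_0$ stabilizes for some subtorus $\mathbb{T}_0$, replace $G_i$ by $Q_i=G_i\cap\mathbb{T}_1$ for a complementary torus $\mathbb{T}_1$; the quotients by $Q_i$ differ from those by $G_i$ by covers of bounded degree, so Lemma~\ref{lem:finite-covers} keeps them in a (new) bounded family, and since the rank of $\mathbb{T}$ drops each time, after finitely many replacements one may assume $G_i\cap\mathbb{T}_0$ is unbounded for \emph{every} subtorus. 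You need this reduction before both the rank exclusion and (ii). A minor further point: take $S$ to be the normalization of a coefficient-one prime component of $\Delta$ itself, as the paper does, rather than the image in $X$ of a divisor on $\widetilde X$; that image can have codimension at least two, in which case your induction is not being applied to an $(n-1)$-dimensional pair.
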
 

\begin{proof}
The statement is clear in dimension one, 
so we can proceed by induction on the dimension.

First, we reduce to the case in which $G_i\cap \mathbb{T}_0$ is an infinite nested sequence of finite groups for every subtorus $\mathbb{T}_0\leqslant \mathbb{T}$.
Assume, otherwise, that $K_i:=G_i \cap \mathbb{T}_0$ stabilizes for some subtorus $\mathbb{T}_0$ of $\mathbb{T}$.
Let $\mathbb{T}_1$ be a complementary torus of $\mathbb{T}_0$ in $\mathbb{T}$.
Then, we know that $Q_i:=G_i\cap \mathbb{T}_1$ is an infinite nested sequence of finite subgroups. 
On the other hand, there is a constant $k_0$ for which $|K_i|\leq k_0$ for every $i\in \nn$.
Hence, the quotients $(X/Q_i,\Delta/Q_i)\rightarrow (X/G_i,\Delta/G_i)$ have degree at most $k_0$.
By Lemma~\ref{lem:finite-covers}, we conclude that $(W^\vee/Q_i,\Delta_{W^\vee}/Q_i)$ belongs to a log bounded family $\mathcal{G}$ for each log canonical center $W$ of $(X,\Delta)$ and $i\in \nn$.
Moreover, the restriction of $\{Q_i\}_{i\in \nn}$ to $W$ gives an infinite nested sequence of finite actions for every log canonical center $W$ of $(X,\Delta)$.
Therefore, we may replace 
$\{G_i\}_{i\in \nn}, \mathbb{T}$, and $\mathcal{F}$ 
with 
$\{Q_i\}_{i\in \nn}, \mathbb{T}_1$, and
$\mathcal{G}$, respectively.
By doing so, the rank of the algebraic torus $\mathbb{T}$ decreases by at least one, 
so we can repeat this process only finitely many times.
Thus, after finitely many replacements, we may assume that $G_i\cap \mathbb{T}_0$ is an infinite nested sequence of finite groups for every subtorus $\mathbb{T}_0\leqslant \mathbb{T}$.

By Condition (2) applied to $X$ itself and Lemma~\ref{lem:bounded-quot-non-triv-boundary}, we know that $\Delta \neq 0$.
Let $S$ be the normalization of a prime component of $\Delta$.
Let $(S,B_S)$ be the log Calabi--Yau pair induced on $S$ by adjunction of $(X,\Delta)$.
Note that $S$ has dimension $n-1$ and $K_S+\Delta_S\sim 0$.
For each $i$, we denote by $H_i$ the restriction of $G_i$ to $S$.
As $G_i\leqslant \mathbb{T}$ for each $i$, we get that $H_i\leqslant \mathbb{T}_S$ for each $i$, where $\mathbb{T}_S$ is the restriction of $\mathbb{T}$ to $S$.
Furthermore, by condition (1) in the theorem, the groups $\{H_i\}_{i\in \nn}$ give an infinite nested sequence of finite actions on $(S,\Delta_S)$.
Moreover, by condition (2) in the theorem, the quotient by $H_i$ of any log canonical center of $(S,\Delta_S)$ belongs to $\mathcal{F}$.
Then, by induction on the dimension, we conclude that the algebraic torus $\mathbb{T}_S$ has dimension $n-1$, for $i$ large enough the group $H_i$ has rank $n-1$, and 
the log Calabi--Yau pair $(S,\Delta_S)$ is a toric log Calabi--Yau pair.
We conclude that $\mathbb{T}$ has rank either $n$ or $n-1$.
It suffices to show that the algebraic torus $\mathbb{T}$ has rank $n$.
Indeed, if $\mathbb{T}$ has rank $n$, then $(X,\Delta)$ is a toric log Calabi--Yau pair.

Assume, by the sake of contradiction, that $\mathbb{T}$ has rank $n-1$.
Let $\pi\colon \widetilde{X}\rightarrow X$ be a $\mathbb{T}$-equivariant projective birational morphism 
for which
the variety $\widetilde{X}$ admits a quotient for the torus action.
Let $\widetilde{X}\rightarrow Z$ be the $\mathbb{T}$-quotient.
Let $\pi^*(K_X+\Delta)=K_{\widetilde{X}}+\widetilde{\Delta}$. The restriction of $(\widetilde{X},\widetilde{\Delta})$ to a general fiber of $F$ is a toric log Calabi--Yau pair.
Let $S$ be a prime component of $\widetilde{\Delta}^{=1}$ that dominates $F$.
The restriction $\mathbb{T}_S$ of $\mathbb{T}$ to $S$ has rank $n-2$. Therefore, the image $\pi(S)$ of $S$ in $X$ has codimension $2$.
Otherwise, we would get a contradiction by induction on the dimension. 
Let $\mathbb{T}_0$ be the kernel 
of the surjection $\mathbb{T}\rightarrow \mathbb{T}_S$.
By the first paragraph, we know that 
$\mu_i:=G_i\cap \mathbb{T}_0$ gives an infinite sequence of finite subgroups.
The torus $\mathbb{T}_S$ acts faithfully on $\pi(S)$.
Let $x\in \pi(S)$ be a general point.
Let $X_i:=X/G_i$ and $x_i$ be the image of $x$ in the quotient.
We denote by $y_i$ the image of $x_i$ in the quotient of $X$ by $\mu_i$.
Then, we have an equality of multiplicities
\[
\mu(X/G_i;x_i)=\mu(X/\mu_i;y_i).
\]
However, by Lemma~\cite[Lemma 2.8]{MYY24}, we know that the right side diverges.
This contradicts the fact that $X/G_i$ belongs to a bounded family $\mathcal{F}$ (see, e.g.,~\cite[Lemma 2.7]{MYY24}).
This leads to a contradiction. 
We conclude that the rank of $\mathbb{T}$ equals the dimension of $X$.
\end{proof} 

\begin{lemma}\label{lem:bounded-family-sugroup-in-torus}
Let $\mathcal{F}$ be a log bounded family of log Calabi--Yau pairs.
Assume that for each $(X,\Delta)\in \mathcal{F}$,
the automorphism group ${\rm Aut}(X,\Delta)$ is a finite extension of an algebraic torus.
Let $\{(X_i,\Delta_i)\}_{i\in \zz}$ be a countable sequence of pairs in $\mathcal{F}$
and let
$f_{i,j}\colon (X_i,\Delta_i)\rightarrow (X_j,\Delta_j)$
be 
compatible\footnote{In the sense that $f_{j,k}\circ f_{i,j}=f_{i,k}$ for every $k\geq j \geq i$.} crepant finite Galois morphisms
for each $i\geq j$.
Let $G_{i,j}\leqslant {\rm Aut}(X_i,\Delta_i)$ be the Galois group corresponding to $f_{i,j}$.
Then, there exists $i_0 \in \zz$ such that
$G_{j,k}\leqslant {\rm Aut}^0(X_j,\Delta_j)$ for every $k \geq j \geq i_0$.
\end{lemma}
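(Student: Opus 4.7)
The strategy is to analyze the images $\bar{G}_{j,k} := G_{j,k}/(G_{j,k}\cap T_j)$ inside the component groups $F_j := {\rm Aut}(X_j,\Delta_j)/T_j$, where $T_j := {\rm Aut}^0(X_j,\Delta_j)$, and show they are eventually trivial. (We interpret the conclusion as asserting $G_{j,k}\leqslant T_j$ for every $j\geq k\geq i_0$, since $G_{j,k}$ is defined precisely for $j\geq k$.) First, from the log boundedness of $\mathcal{F}$ together with the hypothesis that each ${\rm Aut}(X_i,\Delta_i)$ is a finite extension of an algebraic torus, I would extract a uniform upper bound $M$ on the orders $|F_i|$; this follows from the fact that the relative automorphism group scheme over the parameter space of $\mathcal{F}$ is of finite type, so the number of connected components in any fiber is uniformly bounded. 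In particular, $|\bar{G}_{j,k}|\leq M$ for all $j\geq k$.

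The main step is a descent computation. For $j'\geq j\geq k$, the compatibility of the $f_{i,j}$ yields a short exact sequence
\[
1\to G_{j',j}\to G_{j',k}\xrightarrow{\phi} G_{j,k}\to 1
\]
coming from $f_{j',k}=f_{j,k}\circ f_{j',j}$. Since $G_{j',k}$ normalizes its normal subgroup $G_{j',j}$, the induced descent map $N_{{\rm Aut}(X_{j'},\Delta_{j'})}(G_{j',j})/G_{j',j}\to {\rm Aut}(X_j,\Delta_j)$ is a morphism of algebraic groups, sending identity component into identity component. Setting $H_{i,k}:=G_{i,k}\cap T_i$ and working within the closed subgroup of $T_{j'}$ that normalizes $G_{j',j}$, one deduces $\phi(H_{j',k})\subseteq H_{j,k}$ and hence an induced surjection $\bar\phi\colon\bar{G}_{j',k}\twoheadrightarrow \bar{G}_{j,k}$ whose kernel coincides with $\bar{G}_{j',j}$. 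In particular, $\bar\phi$ is an isomorphism precisely when $G_{j',j}\leqslant T_{j'}$.

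Now fix any index $k$ in the sequence. The orders $|\bar{G}_{j,k}|$ then form a non-decreasing sequence of positive integers (via the surjections $\bar\phi$) bounded above by $M$, hence stabilize for all $j\geq i_0$. For any $j'\geq j\geq i_0$, the surjection $\bar\phi\colon\bar{G}_{j',k}\to \bar{G}_{j,k}$ is a surjection between finite groups of equal cardinality, and so it is an isomorphism; its kernel $\bar{G}_{j',j}$ is therefore trivial, giving $G_{j',j}\leqslant T_{j'}$. Relabeling $(j',j)\mapsto(j,k)$ yields the desired conclusion.

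The main obstacle I anticipate is the descent step of the second paragraph: the ambient torus $T_{j'}$ need not normalize the finite subgroup $G_{j',j}$, since conjugation by $T_{j'}$ on $G_{j',j}$ can act by a non-trivial algebraic automorphism (as occurs already for $\mathbb{G}_m \leqslant {\rm PGL}_2$ with $G_{j',j}=\langle z\mapsto 1/z\rangle$). Consequently, the descent map is defined only on the closed subgroup $T_{j'}\cap N_{{\rm Aut}(X_{j'},\Delta_{j'})}(G_{j',j})$, which may be disconnected. Ensuring that the image of $H_{j',k}$ lies in the connected component $T_j$, rather than in some other component of ${\rm Aut}(X_j,\Delta_j)$, is the crux of the argument and presumably requires invoking the infinite-tower structure together with log boundedness to preclude pathological descent behavior.
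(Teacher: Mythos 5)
Your overall architecture --- compare the images $\bar G_{j,k}$ of the relative Galois groups in the component groups $F_j={\rm Aut}(X_j,\Delta_j)/{\rm Aut}^0(X_j,\Delta_j)$, use the exact sequences $1\to G_{j',j}\to G_{j',k}\to G_{j,k}\to 1$, and stabilize --- is close in spirit to the paper's, and your final counting step would indeed work if the surjection $\bar\phi\colon \bar G_{j',k}\twoheadrightarrow \bar G_{j,k}$ existed. But the existence of $\bar\phi$, i.e.\ the claim $\phi(H_{j',k})\subseteq H_{j,k}$ that an element of the Galois group lying in the identity component upstairs descends into the identity component downstairs, is never established, and you correctly flag it yourself as the crux: descent is only defined on $N_{{\rm Aut}(X_{j'},\Delta_{j'})}(G_{j',j})$, the subgroup $T_{j'}\cap N(G_{j',j})$ is a possibly disconnected diagonalizable group, and only its identity component is guaranteed to land in ${\rm Aut}^0(X_j,\Delta_j)$. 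This unproved claim is of exactly the same nature as the conclusion of the lemma, so the proposal has a genuine gap at its key step rather than a routine verification left to the reader. (A secondary, smaller gap: the uniform bound $M$ on $|F_i|$ over the family is asserted from log boundedness but not argued; note that the paper's proof is arranged so that no such uniform bound is needed --- it only uses finiteness of the component group of a single fixed member and of one auxiliary algebraic group.)

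The paper fills the gap by a two-stage stabilization carried out at a fixed level of the tower rather than level-by-level. First, the images of the nested groups $G_{\ell_0,j}$ in the finite group $F_{\ell_0}$ stabilize, which forces the relative groups $G_{\ell_1,j}\cong G_{\ell_0,j}/G_{\ell_0,\ell_1}$ to be abelian for all $j\geq \ell_1$. Being abelian, each $G_{\ell_1,j}$ is contained in its own centralizer $Z_j\leq{\rm Aut}(X_{\ell_1},\Delta_{\ell_1})$; these centralizers stabilize to an algebraic group $Z_\infty$ containing every $G_{\ell_1,j}$, and the connected group $Z_\infty^0$ normalizes all of them \emph{simultaneously}, so its image under every descent map lies in ${\rm Aut}^0(X_j,\Delta_j)$. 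Finally, stabilizing the images of the $G_{\ell_1,j}$ in the finite component group $Z_\infty/Z_\infty^0$ shows that $G_{j,k}\cong G_{\ell_1,k}/G_{\ell_1,j}$ maps into $Z_\infty^0/(Z_\infty^0\cap G_{\ell_1,j})$, hence into ${\rm Aut}^0(X_j,\Delta_j)$. To complete your argument you would need to import some version of this simultaneous-normalization device; the level-by-level descent of $H_{j',k}$ alone does not suffice.
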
 

\begin{proof}
For each $i\leq j\leq k$, we have a short exact sequence
\[
1\rightarrow G_{i,j} \rightarrow G_{i,k}
\rightarrow G_{j,k}\rightarrow 1.
\]
Fix $\ell_0\in \zz$.
Then, there exists $\ell_1$ such that 
for every $j\geq \ell_1$ the image of
the groups $G_{\ell_0,j}$ stabilize in 
${\rm Aut}(X_{\ell_0},\Delta_{\ell_0})/{\rm Aut}^0(X_{\ell_0},\Delta_{\ell_0})$. Hence, we conclude that $G_{\ell_0,j}/G_{\ell_0,\ell_1}\simeq G_{\ell_1,j}$ is abelian for every $j\geq \ell_1$.

Let $Z_j$ be the centralizer of $G_{\ell_1,j}$ in ${\rm Aut}(X_{\ell_1},\Delta_{\ell_1})$.
As $G_{\ell_1,j}$ are abelian groups, they are contained in their centralizer $Z_j$.
The group $Z_j$ stabilizes for $j$ large enough.
Indeed, this follows from the fact that 
$Z_j\cap \mathbb{T}_{\ell_1}$ stabilize for $j$ large enough. Here, $\mathbb{T}_{\ell_1}$ is the connected component of the automorphism group
of $(X_{\ell_1},\Delta_{\ell_1})$.
Let $Z_\infty$ be the limit of the centralizers.
Note that $Z_\infty$ is an algebraic group
that contains all the groups $G_{\ell_1,j}$.

Let $Z_\infty^0$ be the connected component of $Z_\infty$.
Then, there exists $i_1\in \zz$ such that
the image of $G_{\ell_1,j}$ stabilize in 
$F_\infty:=Z_\infty/Z_\infty^0$ for every $j\geq i_1$.
We denote by $\pi_\infty\colon Z_\infty\rightarrow F_\infty$ the projection onto the groups of components.
Let $j \geq i_1$.
Let $N_j$ be the normalizer of $G_{\ell_1,j}$ in ${\rm Aut}(X_{\ell_1},\Delta_{\ell_1})$.
Then, we have a natural homomorphism 
$N_j/G_{\ell_1,j} \rightarrow {\rm Aut}(X_j,\Delta_j)$ for every $j$.
Note that we have a commutative diagram:
\begin{equation}\label{diag1}
\xymatrix{
Z^0_\infty/(Z^0_\infty\cap G_{\ell_1,j})\ar[d] 
\ar@{^{(}->}[r] & Z_\infty/G_{\ell,j} \ar@{^{(}->}[r] &
N_j/G_{\ell_1,j} \ar[d] \\ 
{\rm Aut}^0(X_j,\Delta_j) \ar@{^{(}->}[rr] & &
{\rm Aut}(X_j,\Delta_j).
}
\end{equation}
On the other hand, for every $k\geq j$, there is a commutative diagram
\begin{equation} 
\label{diag2}
\xymatrix{
1 \ar[r] & Z_\infty^0/(Z_\infty^0\cap G_{\ell_1,j}) \ar[r] & 
Z_\infty/G_{\ell_1,j} \ar[r] & F_\infty/\pi_\infty(G_{\ell_1,j}) \ar[r] & 1 \\
1 \ar[r] & (Z_\infty^0\cap G_{\ell_1,k}) /(Z_\infty^0\cap G_{\ell_1,j}) \ar[u] \ar[r] & G_{\ell_1,k}/G_{\ell_1,j} \ar[u] \ar[r] & \pi_\infty(G_{\ell_1,k})/\pi_\infty(G_{\ell_1,j}) \ar[r] \ar[u] & 1 \\
}
\end{equation}
By the commutative diagram~\eqref{diag2}, and the fact that $G_{\ell_1,k}$ and $G_{\ell_1,j}$ have the same image in $F_\infty$, we conclude that 
the image of $G_{\ell_1,k}/G_{\ell_1,j}$ in $Z_\infty/G_{\ell_1,j}$ lies in 
$Z^0_\infty/(Z_\infty^0\cap G_{\ell_1,j})$.
By the commutative diagram~\eqref{diag1}, we conclude that the image of $G_{\ell_1,k}/G_{\ell_1,j}$ lies in the connected component of ${\rm Aut}(X_j,\Delta_j)$.
Thus, for every $k\geq j\geq \ell_1$, the group $G_{j,k}$ is contained in ${\rm Aut}^0(X_j,\Delta_j)$ for every $k\geq j\geq \ell_1$.
\end{proof} 

\section{Lifting int-amplified endomorphisms to small $\qq$-factorializations}

In this section, we prove that we can lift
a surjective endomorphism
to a small $\qq$-factorialization.

\begin{theorem}\label{thm:lift-to-Q-fact}
Let $X$ be a klt type variety, and let $(X,\Delta)$ be a log Calabi--Yau pair.
Let $f\colon (X,\Delta) \to (X,\Delta)$ be
a surjective endomorphism.
Then, up to replacing $f$ with an iteration, there exists
a small $\qq$-factorialization $\phi\colon (Y,\Delta_Y) \to (X,\Delta)$
such that $(Y,\Delta_Y)$ admits an int-amplified endomorphism $f_Y$
making the following diagram commute
\[
    \xymatrix{
        (Y,\Delta_Y) \ar[r]^{f_Y} \ar[d]_{\phi} & (Y,\Delta_Y) \ar[d]^{\phi}\\
        (X,\Delta) \ar[r]_{f} & (X,\Delta)
    }
\]
\end{theorem}

\begin{proof}
As the variety $X$ is of klt type, it admits a small 
$\qq$-factorialization $\phi_X \colon (X',\Delta_{X'}) \to (X,\Delta)$. 
The morphism $\phi_X$ is of Fano type, so in particular, 
$X' \xrightarrow{\phi} X$ is a relative Mori Dream Space 
(see, e.g., \cite[Theorem 3.18]{BM21}).
\bigskip

\emph{Step 1.}

Consider the diagram
\[
    \xymatrix{
        (X_n,\Delta_{X_n}) \ar[r]^{g_n} \ar[d]_{\phi_n} & (X',\Delta_{X'}) \ar[d]^{\phi_X}\\
        (X,\Delta) \ar[r]_{f^n} & (X,\Delta),
    }
\]
where $(X_n,\Delta_{X_n})$ is the main component of the normalization of the fiber product.
The map $\phi_X$ is small, which implies that $\phi_n$ is small,
$X_n$ is of klt type,
but $X_n$ might no be $\qq$-factorial.
For each $n$, take a small $\qq$-factorialization 
$(X'_n,\Delta_{X'_n}) \to (X_n,\Delta_{X_n})$,
which in turn gives a small $\qq$-factorialization
\[(X'_n,\Delta_{X'_n}) \to (X_n,\Delta_{X_n}) \to (X,\Delta).\]
The morphism $X' \to X$ is a relative Mori Dream Space, 
so $(X',\Delta')$ has finitely many small $\qq$-factorial modifications over $X$.
In particular,  there exists $n \in \nn$ such that for infinitely many $m\in \nn$,
$(X'_n,\Delta_{X'_n})$ is isomorphic to $(X'_m,\Delta_{X'_m})$.

Replace $(X',\Delta_{X'})$ with $(X_n,\Delta_{X_n})$, 
and only consider the pairs $(X_m, \Delta_{X_m})$ such that
$(X'_m,\Delta_{X'_m})$ is isomorphic to $(X'_n,\Delta_{X'_n})$
\[
    \xymatrix{
        (X'_n,\Delta_{X'_n})\ar[d] & (X'_n,\Delta_{X'_n})\ar[d]\\
        (X_m,\Delta_{X_m}) \ar[r]^{g_{k_m}} \ar[d]_{\phi_m} & (X_n,\Delta_{X_n}) \ar[d]^{\phi_n}\\
        (X,\Delta) \ar[r]_{f^{k_m}} & (X,\Delta).
    }
\]
Once again, as the map $X'_n \to X$ is a relative Mori Dream Space,
it has finitely many intermediate contractions, so for $m_1, m_2 \gg 0$,
$(X_{m_1}, \Delta_{X_{m_1}})$ is isomorphic to $(X_{m_2}, \Delta_{X_{m_2}})$,
and we obtain, up to replacing $f$ with an iteration,
\[
    \xymatrix{
        (X_{m_1},\Delta_{X_{m_1}}) \ar[d]_{\phi_{m_1}} \ar[r]^{g} & (X_{m_1},\Delta_{X_{m_1}}) \ar[d]^{\phi_{m_1}} \\
        (X,\Delta) \ar[r]_{f} & (X,\Delta).
    }
\]
Let $(Y_1,\Delta_1):= (X_{m_1},\Delta_{X_{m_1}})$ and $\phi_1 = \phi_{m_1}$.
\bigskip

\emph{Step 2.}

We may repeat the argument of Step 1 with the pair $(Y_1,\Delta_1)$ 
to obtain a small morphism $(Y_2,\Delta_2) \xrightarrow{\phi_2} (Y_1,\Delta_1)$ 
such that we can lift an iteration of $f$ to $(Y_2,\Delta_2),$
and in general,
we may find a small morphism $(Y_{k+1},\Delta_{k+1}) \xrightarrow{\phi_{k+1}} (Y_k,\Delta_k)$,
for all $k\in \nn$,
with the same properties as above.
We want to show that $Y_k$ is $\qq$-factorial, for some $k\in \nn$.

For each $k\in \nn$, let $(Y'_k,\Delta'_k) \xrightarrow{\psi_k} (Y_k,\Delta_k)$ 
be a small $\qq$-factorialization. 
As argued in Step 1, there are finitely many $(Y'_k,\Delta'_k)$,
so we may consider the case where all $(Y'_k,\Delta'_k)$
are isomorphic to a fixed $(Y',\Delta'_Y)$.
Then, as above, $Y' \to X$ has finitely many intermediate contractions.
Take $k \in \nn$ larger than the total number of possible 
intermediate contractions of $Y' \to X$.
Then one of the maps
\[
    Y' \xrightarrow{\psi_k} Y_k \xrightarrow{\phi_k} Y_{k-1} \to \cdots \to Y_1 \xrightarrow{\phi_1} X
\]
must be an isomorphism. 
If $\psi_k$ is an isomorphism, then $Y_k$ is $\qq$-factorial. 
If $\phi_j$ is an isomorphism for some $j$, 
then it means that the starting $\qq$-factorialization $Y_j \xrightarrow{\phi_{Y_{j-1}}} Y_{j-1}$
was an isomorphism, so $Y_{j-1}$ is $\qq$-factorial.

Therefore, there exists a small $\qq$-factorialization $\phi\colon (Y,\Delta_Y) \to (X,\Delta)$ a surjective endomorphism $f_Y \colon (Y,\Delta_Y) \to (Y,\Delta_Y)$ such that the diagram
\[
    \xymatrix{
        (Y,\Delta_Y) \ar[r]^{f_Y} \ar[d]_{\phi} & (Y,\Delta_Y) \ar[d]^{\phi}\\
        (X,\Delta) \ar[r]_{f} & (X,\Delta)
    }
\]
commutes.
\end{proof}

\begin{remark}
    By \cite[Theorem 3.3]{Men17}, if $f$ is int-amplified, then $f_Y$ is int-amplified.
\label{rem:lift_is_int}
\end{remark}

\begin{remark}
    The proof of Theorem \ref{thm:lift-to-Q-fact} still holds for a surjective endormorphism $f\colon X \to X$, but one has to drop the Calabi--Yau pair structure. 
\end{remark}

\section{Int-amplified endomorphisms on pdlt pairs}

In this section, we prove the following theorem
regarding int-amplified endomorphisms on pdlt pairs.

\begin{theorem}\label{thm:main-pdlt}
Let $(X,\Delta)$ be a log Calabi--Yau pair with pdlt singularities.
Assume that $(X,\Delta)$ admits an int-amplified endomorphism.
Then $(X,\Delta)$ is a finite quotient of a toric log Calabi--Yau fibration over an abelian variety.
\end{theorem}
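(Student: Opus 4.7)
The plan is to follow the roadmap sketched in the introduction, upgrading from the polarized case to the int-amplified one. First, by Lemma~\ref{lem:stand-coeff}, the divisor $\Delta$ has standard coefficients, so $K_X+\Delta$ is $\qq$-Cartier with bounded index; let $g\colon(Y,\Delta_Y)\to(X,\Delta)$ be the index-one cover of $K_X+\Delta$, so that $K_Y+\Delta_Y\sim 0$ and $\lfloor\Delta_Y\rfloor=\Delta_Y^{=1}$ is the full support of $\Delta_Y$ after adjusting coefficients along the branch divisor. By Lemma~\ref{lem:finite-cover-pdlt} the pair $(Y,\Delta_Y)$ is pdlt, hence of klt type. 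Applying~\cite[Theorem 3.1]{MYY24} (which does not need the Fano type hypothesis), some iteration $f^n$ lifts to an int-amplified endomorphism $f_Y$ of $(Y,\Delta_Y)$. Taking a small $\qq$-factorialization of $Y$ and using Lemma~\ref{lem:passing-to-small-Q-fact}, it suffices to produce a weak toric log Calabi--Yau fibration over an abelian variety after possibly replacing $Y$ by a further finite cover. Therefore, from now on, I will assume $X$ is $\qq$-factorial, $(X,\Delta)$ is pdlt, $K_X+\Delta\sim 0$, and $f$ is int-amplified.

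Next, by~\cite[Theorem 6.6]{Yos21}, there is an MRC-type Fano fibration $\phi\colon(X,\Delta)\to A$ to an abelian variety that is equivariant with respect to some iteration of $f$, say $f$ itself, fitting in a diagram with an induced endomorphism $f_A\colon A\to A$; moreover $f_A$ is int-amplified by Lemma~\ref{lem:polarized-with-fibration} (otherwise $f$ would act as an automorphism on very general fibers, contradicting int-amplification on the base-total space level). Using Lemma~\ref{lem:div-hor} after a further crepant base change, we arrange that strata of $\Delta$ restrict irreducibly to general fibers. Now the issue is that $f$ need not induce a polarized, or even int-amplified, endomorphism on a single fiber; however, by the Jordan property for fundamental groups of klt Fano type varieties (see~\cite{BFMS20,Mor21}) and the usual lifting principle from~\cite{MYY24}, every iteration of $f$ induces a crepant finite Galois quotient $(F_i,\Delta_{F_i})\to(F_0,\Delta_{F_0})$ between very general fibers. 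Since all these quotients appear as fibers of $\phi$, they form a log bounded family (as $\phi$ is Fano type), so Lemma~\ref{lem:bounded-family-sugroup-in-torus} eventually forces the Galois groups to lie in $\mathrm{Aut}^0(F_i,\Delta_{F_i})$, i.e.\ in an algebraic torus. Applying Theorem~\ref{thm:toric-bounded-quot} to this nested infinite sequence, the very general fiber $(F,\Delta_F)$ is a toric log Calabi--Yau pair, and then Lemma~\ref{lem:toric-vg-g} upgrades this to the general fiber.

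It remains to promote the weak toric log Calabi--Yau fibration $\phi\colon(X,\Delta)\to A$ to an actual toric log Calabi--Yau fibration via Proposition~\ref{prop:from-weak-to-toric}. The base $A$ is smooth, and since $f_A^*(K_A+B_A)=K_A+B_A$ with $K_A\sim 0$ and $f_A$ int-amplified we obtain $B_A=0$ and $\mathbf{M}_A=0$ after replacing $f$ by an iteration (using that $f_A$ contracts the log canonical divisor of the induced lc-trivial fibration to $0$ on an abelian variety). The only remaining point, and the technical heart of the proof, is the vanishing of $\phi$-degenerate and $\phi$-exceptional divisors. The expected argument is: by Lemma~\ref{lem:finite-deg-div} there are only finitely many degenerate divisors; by Lemma~\ref{lem:degenerate-divisors-pullback}, $f$ permutes them, and the collection of images $\phi(D)\subset A$ is permuted by $f_A$; so some iterate of $f_A$ fixes each such image $Z\subset A$ setwise and preserves its irreducibility. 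Then $f_A^{-n}(Z)=Z$, $Z$ is a proper subvariety of $A$, and $\{\mathrm{vol}(H|_{Z})\}$ is trivially bounded; but Lemma~\ref{lem:int-amplified-abelian-no-fixed-sub} says no such fixed proper subvariety exists for an int-amplified endomorphism of an abelian variety. This contradiction shows $\phi$ has no degenerate divisors, so Proposition~\ref{prop:from-weak-to-toric} applies and $(X,\Delta)\to A$ is a toric log Calabi--Yau fibration, as required.

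The main obstacle I anticipate is in the Jordan-property step: one needs that, for the very general fiber $(F,\Delta_F)$, the induced endomorphism between fibers is actually a crepant Galois finite morphism after iteration, even though $f$ itself is only int-amplified and the fibers are not fixed pointwise; this requires combining the Jordan property with the boundedness of $\phi$, and carefully propagating finite actions through the index-one cover and $\qq$-factorialization reductions taken at the start. Once this step is in place, the rest of the argument is a reasonably direct combination of the preparatory lemmas above with Theorem~\ref{thm:toric-bounded-quot} and Proposition~\ref{prop:from-weak-to-toric}.
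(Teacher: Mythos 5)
Your proposal follows essentially the same route as the paper's proof: standard coefficients, index-one cover, preservation of pdlt, lifting the endomorphism, small $\qq$-factorialization, Yoshikawa's fibration over an abelian variety, Galois quotients between very general fibers feeding into Theorem~\ref{thm:toric-bounded-quot}, and finally the degenerate-divisor argument combined with Proposition~\ref{prop:from-weak-to-toric}. Two steps, however, are left as gaps or are justified incorrectly. The step you yourself flag as the main obstacle --- that iterates of $f$ induce \emph{Galois} crepant finite morphisms between very general fibers --- is not obtained from the Jordan property. The paper's argument uses Nori's exact sequence $\pi_1^{\rm reg}(F,\Delta_F)\to\pi_1^{\rm reg}(Y,\Delta_Y)\to\pi_1(A)\to 1$ from \cite{Nor83}, the virtual abelianness of $\pi_1^{\rm reg}(F,\Delta_F)$ from \cite[Theorem 3.3]{MYY24}, and residual finiteness of the resulting extension: after one more finite cover the image of $\pi_1^{\rm reg}(F,\Delta_F)$ becomes abelian, so the finite-index subgroups corresponding to the fiberwise covers are normal, which is what makes those covers Galois. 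Without some such argument the nested sequence of Galois groups $G_i$ needed for Theorem~\ref{thm:toric-bounded-quot} does not exist.

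The second gap is your claim that $f_A$ is int-amplified ``by Lemma~\ref{lem:polarized-with-fibration}.'' That lemma only says that if $f$ restricts to isomorphisms on very general fibers then $f$ is not int-amplified; it gives no information about the induced endomorphism of the base. Since your degenerate-divisor argument and your vanishing of $B_A$ both invoke Lemma~\ref{lem:int-amplified-abelian-no-fixed-sub}, which requires knowing that $f_A$ \emph{is} int-amplified, this needs a real proof. The paper supplies one: the general fiber, being a toric log Calabi--Yau pair, has a zero-dimensional stratum, so $(Y,\Delta_Y)$ has a stratum $S$ whose normalization maps birationally (in fact isomorphically, as $A$ is smooth and the adjoint boundary must vanish) onto $A$; then \cite[Lemma 2.6]{MYY24} shows that the endomorphism induced on $S^{\nu}\simeq A$, hence $f_A$, is int-amplified after iteration. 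Relatedly, condition (1) of Theorem~\ref{thm:toric-bounded-quot} --- that the groups $G_i$ restrict to an infinite nested sequence of actions on every stratum of the fiber --- also requires a separate verification, which the paper again carries out using \cite[Lemma 2.6]{MYY24} together with Lemma~\ref{lem:polarized-with-fibration}. With these three points filled in, your outline matches the paper's proof.
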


\begin{proof}
First, we aim to prove that a finite cover of $(X,\Delta)$ admits a weak toric log Calabi--Yau fibration to an abelian variety.

By Lemma~\ref{lem:stand-coeff}, we know that $\Delta$ has standard coefficients.
Thus, we can take the index one cover of $K_X+\Delta\sim_\qq 0$, call it $p \colon Y\rightarrow X$, and by Riemann-Hurwitz, we get 
$K_Y+\Delta_Y=p^*(K_X+\Delta)$ where 
$\Delta_Y$ is a reduced divisor and $K_Y+\Delta_Y\sim 0$.
By~\cite[Corollary 3.2]{MYY24}, the pair
$(Y,\Delta_Y)$ admits an int-amplified endomorphism $f_Y$.
By Lemma~\ref{lem:finite-cover-pdlt}, we know that $(Y,\Delta_Y)$ has pdlt singularities.
The pair $(Y,\Delta_Y)$ admits a small $\qq$-factorialization as it is of klt type.
Further, the small $\qq$-factorialization is still pdlt.
Then, by Lemma~\ref{lem:passing-to-small-Q-fact}, Theorem \ref{thm:lift-to-Q-fact} and Remark \ref{rem:lift_is_int}, we may assume that $Y$ is $\qq$-factorial.

Now, we can apply~\cite[Theorem 6.6]{Yos21}, to obtain a commutative diagram as follows:
\[
\xymatrix{
(Y,\Delta_Y) \ar[r]^{f_Y} \ar[d]_{\phi_Y} &(Y,\Delta_Y) \ar[d]^{\phi_Y} \\
A \ar[r]^{f_A} &A,
}
\]
where the following conditions are satisfied:
\begin{enumerate}
\item the variety $Y$ is $\qq$-factorial,
\item the pair $(Y,\Delta_Y)$ is pdlt, 
\item $K_Y+\Delta_Y\sim 0$,
\item $\phi_Y$ is of Fano type, and
\item $A$ is an abelian variety.
\end{enumerate}
Let $F$ be a general fiber of $\phi_Y$
and $\Delta_F$ be the restriction of $\Delta_Y$ to $F$.
By~\cite[Lemma 1.5.C]{Nor83} (see also \cite[Lemma 3.13]{FM23}), we have an exact sequence of fundamental groups:
\[
\xymatrix{ 
\pi_1^{\rm reg}(F,\Delta_F) 
\ar[r]^{\psi} 
&\pi_1^{\rm reg}(Y,\Delta_Y)\ar[r]
&\pi_1(A) \ar[r] &1. 
}
\]
The group $\pi_1^{\rm reg}(F,\Delta_F)$ is finitely generated, 
and by~\cite[Theorem 3.3]{MYY24},
we know that it is virtually abelian.
In particular, it is a residually finite group. 
The image of $\psi$ is again finitely generated and virtually abelian,
so it is residually finite.
Since the extension of a finitely generated residually finite group 
by a cyclic group
is again residually finite, we conclude,
by an induction argument,
that $\pi_1^{\rm reg}(Y,\Delta_Y)$ is residually finite.
The image of $\psi$ is virtually abelian.
As $\pi_1^{\rm reg}(Y,\Delta_Y)$ is residually finite it admits a normal subgroup of finite index 
whose intersection with ${\rm Im}(\psi)$ is abelian.
Hence, passing to a finite cover of $(Y,\Delta_Y)$, we may assume that the image of $\psi$ is indeed abelian.
By~\cite[Theorem 3.1]{MYY24}, we may assume that the finite cover admits an int-amplified endomorphism.
Thus, for every general closed point $a\in A$, the finite morphism
\[
f_Y\colon (Y_a,\Delta_a) \rightarrow 
(Y_{f_A(a)},\Delta_{f_A(a)})
\]
is Galois.
Note that, by abuse of notation, we are
denoting by $f_Y$ the restriction to the fiber.
Let $U$ be an open set for which
$Y_u$ is Fano type and klt 
and $(Y_u,\Delta_u)$ is log canonical 
for every $u\in U$.
Let $Z$ be its complement. 
We consider the set $V:=A\setminus \cup_{i\in \zz} f_A^i(Z)$.
Let $v\in V$
and let $G_i \leqslant {\rm Aut}(Y_v,\Delta_v)$ 
be the Galois group corresponding to the finite morphism
\[
f_Y^i \colon (Y_v,\Delta_v) 
\rightarrow \left(Y_{f_A^i(v)},\Delta_{f_A^i(v)}\right).
\]
By~\cite[Theorem 2.10]{MYY24} and Lemma~\ref{lem:bounded-family-sugroup-in-torus}, we conclude that $G_i\leqslant {\rm Aut}^0(Y_v,\Delta_v)$ for all $i$ large enough. 
The restriction of $G_i$ to every stratum of $\Delta_v$ is non-trivial. 
Otherwise, by applying~\cite[Lemma 2.6]{MYY24} and Lemma~\ref{lem:polarized-with-fibration}, we get a contradiction.
Then, we can apply Theorem~\ref{thm:toric-bounded-quot} to conclude that $(Y_v,\Delta_v)$ is a toric log Calabi--Yau pair. 
We have concluded that the very general fiber of $(Y,\Delta_Y)\rightarrow A$ is a toric log Calabi--Yau pair. 
Thus, Lemma~\ref{lem:toric-vg-g} implies that the general fiber of 
$(Y,\Delta_Y)\rightarrow A$ is a toric log Calabi--Yau pair.
Hence, we can apply Lemma~\ref{lem:div-hor} to deduce that a finite cover $(Y,\Delta_Y)$ of $(X,\Delta)$ admits a 
weak toric log Calabi--Yau fibration to an abelian variety.

Now, we turn to prove that the weak toric log Calabi--Yau fibration 
is indeed a toric log Calabi--Yau fibration.
Note that $Y$ is $\qq$-factorial, 
$\phi_Y \colon Y\rightarrow A$ is a Fano type morphism, 
$A$ is a smooth variety, 
and both the moduli and boundary divisors induced by the canonical bundle formula on $A$ are trivial.
Hence, due to Proposition~\ref{prop:from-weak-to-toric}, it suffices to argue that $\phi_Y\colon Y\rightarrow A$ has no degenerate divisors.
Note that by Lemma~\ref{lem:div-hor}, every irreducible strata of $\Delta_Y$ 
restricts to an irreducible variety in a general fiber.
Note that a general fiber of $(Y,\Delta_Y)$ over $A$ has a $0$-dimensional strata, as it is a toric log Calabi--Yau pair.
Then, the pair $(Y,\Delta_Y)$ has a strata $S$ such that its normalization $S^\nu$ admits a birational morphism onto $A$.
By adjunction, the normal variety $S^\nu$
admits a log Calabi--Yau structure $(S^\nu,B_{S^\nu})$ such that the induced birational morphism $\phi_S \colon S^\nu \rightarrow A$ is crepant for $(S^\nu,B_{S^\nu})$ and $A$.
Since $A$ is smooth, we conclude that $B_{S^\nu}=0$ and $S\rightarrow A$ is an isomorphism.
By~\cite[Lemma 2.6]{MYY24}, up to replacing $f_Y$ with an iteration, the finite morphism induced on $S^\nu$ is int-amplified, so the finite morphism $f_A$ induced on $A$ is int-amplified.

Now, assume that there is a degenerate divisor $S$ of $\phi_Y\colon (Y,\Delta_Y)\rightarrow A$.
Let $Z:=\phi_Y(S)$.
Every irreducible component of $f_A^{-1}(Z)$ is the image of a degenerate divisor on $Y$ (see Lemma~\ref{lem:degenerate-divisors-pullback}).
By Lemma~\ref{lem:int-amplified-abelian-no-fixed-sub}, we know that $f_A$ has no fixed subvarieties. 
Thus, if $\phi_Y\colon Y\rightarrow A$ has one degenerate divisor, then it would have infinitely many degenerate divisors, leading to a contradiction (see Lemma~\ref{lem:finite-deg-div}).
We conclude that $\phi_Y$ has no degenerate divisors,
so $\phi_Y\colon (Y,\Delta_Y)\rightarrow A$ is a toric log Calabi--Yau fibration by
Proposition~\ref{prop:from-weak-to-toric}.
Henceforth, the log Calabi--Yau pair
$(X,\Delta)$ has a finite cover
$(Y,\Delta_Y)$ that admits a toric log Calabi--Yau fibration to an abelian variety.
\end{proof} 

\section{Dynamics on the dual complex}
\label{sec:dynamics}
In this section, we study the dynamics induced
on the dual complex $\mathcal{D}(X,\Delta)$
by an int-amplified endomorphism
$f\colon (X,\Delta) \rightarrow (X,\Delta)$.
Throughout this section, we assume that the dual complex $\mathcal{D}(X,\Delta)$ of a log Calabi--Yau pair $(X,\Delta)$ is simplicial (see, e.g.~\cite[Remark 10]{dFKX17}).
Given a log canonical place $E$ of $(X,\Delta)$, we denote by $v_E$ the corresponding point in the dual complex $\mathcal{D}(X,\Delta)$.

\begin{definition}
{\em 
Let $(X,\Delta)$ be a log Calabi--Yau pair
and $\mathcal{D}(X,\Delta)$ be its dual complex. 
An {\em integral lattice representation}
of $\mathcal{D}(X,\Delta)$ is an embedding $p\colon \mathcal{D}(X,\Delta)\rightarrow \rr^k$ such that all its vertices get mapped into the vertices of a simplex.
The {\em rational points}
of the dual complex, denoted by $\mathcal{D}(X,\Delta)_\qq$, are the rational points in the integral lattice representations.
The rational points of $\mathcal{D}(X,\Delta)$ are precisely those points that correspond to log canonical places of $(X,\Delta)$.
}
\end{definition} 

\begin{definition}\label{def:map-between-dc}
{\em 
Let $(X,\Delta)$ and $(Y,\Delta_Y)$ be two qdlt sub-pairs.
Let $f\colon (X,\Delta)\rightarrow (Y,\Delta_Y)$ be a crepant finite morphism. 
We obtain an induced map $\mathcal{D}(f)\colon \mathcal{D}(\Delta^{=1})\rightarrow \mathcal{D}(\Delta_Y^{=1})$ defined as follows.
Given a prime component $E$ of $\Delta^{=1}$, by Riemann-Hurwitz, it maps onto a component $S$ of $\Delta_{Y}^{=1}$ and we set $\mathcal{D}(f)(v_E)=v_S$. 
Then, the map extends to a continuous function by linearity. 
}
\end{definition}

\begin{theorem}\label{thm:def-D(f)}
Let $f\colon (X,\Delta) \to (X,\Delta)$ be a finite endomorphism of a log Calabi--Yau pair. Then $f$ induces a continuous function 
$\mathcal{D}(f)\colon \mathcal{D}(X,\Delta) \to \mathcal{D}(X,\Delta)$.
Moreover, for every commutative diagram
\[
\xymatrix{
(Y_1,B_1) \ar[r]^-{f_Y} \ar[d] & \ar[d] (Y_2,B_2) \\
(X,\Delta) \ar[r]_-f & (X,\Delta) 
}
\]
with each $(Y_i,B_i)\rightarrow (X,\Delta)$ a qdlt modification and $f_Y$ a crepant finite morphism, the 
continuous function $\mathcal{D}(f)$ 
satisfies the following condition:
If $f_Y(E_1)=E_2$ with $E_1$ a prime component of  $\lfloor B_1\rfloor$, then $\mathcal{D}(f)(v_{E_1})=v_{E_2}$.
Furthermore, we have $\mathcal{D}(f)(\mathcal{D}(X,\Delta)_\qq)=\mathcal{D}(X,\Delta)_\qq$.
\end{theorem}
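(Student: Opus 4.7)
The plan is to define $\mathcal{D}(f)$ on the vertices of $\mathcal{D}(X,\Delta)$ by push-forward of divisorial valuations, extend by affine interpolation on each simplex, and then verify the compatibility condition with arbitrary qdlt-lift diagrams as well as surjectivity on rational points.

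\textbf{Vertex-level construction.} For a log canonical place $E$ of $(X,\Delta)$ with associated divisorial valuation $v_E$ on $K(X)$, the composition $v_E\circ f^*$ is a valuation on $K(X)$; dividing by its $\gcd$ yields a divisorial valuation corresponding to a prime divisor $F$ over $X$. The identity $f^*(K_X+\Delta)=K_X+\Delta$ together with a Riemann--Hurwitz calculation forces $a_F(X,\Delta)=0$, so $F$ is again a log canonical place. Set $\mathcal{D}(f)(v_E):=v_F$.

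\textbf{Realization via qdlt diagrams.} For any qdlt modification $\pi_2\colon(Y_2,B_2)\to(X,\Delta)$, form the normalization $\tilde{Y}_1$ of the fibre product $Y_2\times_{X,f}X$. The projection $\tilde{Y}_1\to Y_2$ is finite (base change of $f$) and the projection $\tilde{Y}_1\to X$ is proper birational (base change of $\pi_2$); a chain-rule computation gives that both are crepant for the induced boundaries. A qdlt-refinement argument in the spirit of Lemma~\ref{lem:finite-cover-pdlt} then produces a qdlt modification $(Y_1,B_1)\to(X,\Delta)$ together with a finite crepant lift $f_Y\colon(Y_1,B_1)\to(Y_2,B_2)$. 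For any such diagram, Riemann--Hurwitz applied to $f_Y$ along a prime component $E_1\subset\lfloor B_1\rfloor$ yields $v_{E_1}\circ f^*=r\cdot v_{f_Y(E_1)}$ for the ramification index $r$, so $\mathcal{D}(f)(v_{E_1})=v_{f_Y(E_1)}$. This simultaneously establishes the compatibility condition in the statement and the independence of $\mathcal{D}(f)$ from the chosen diagram.

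\textbf{Extension to simplices, continuity, and surjectivity.} Each simplex of $\mathcal{D}(X,\Delta)$ is spanned by vertices corresponding to the components of $\lfloor B_1\rfloor$ incident to a single stratum on some qdlt model $(Y_1,B_1)$. The map $f_Y$ sends this stratum to one on $Y_2$, and Definition~\ref{def:map-between-dc} provides a simplicial map on the corresponding simplex; we declare $\mathcal{D}(f)$ to equal this map, extended affinely. Cross-simplex consistency reduces to the vertex-level description already proved, and continuity is automatic for piecewise affine maps on a simplicial complex. For surjectivity on $\mathcal{D}(X,\Delta)_\qq$ it suffices by linearity to treat a vertex $v_F$: choose $(Y_2,B_2)$ extracting $F$, run the previous construction to obtain $(Y_1,B_1)$ and a finite crepant $f_Y$; since $f_Y$ is finite and surjective, some prime component $E\subset\lfloor B_1\rfloor$ maps onto $F$, whence $\mathcal{D}(f)(v_E)=v_F$.

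The main obstacle is the qdlt refinement in the second step: one must verify that the normalized fibre product $\tilde{Y}_1$ admits a qdlt refinement $(Y_1,B_1)$ for which $f_Y$ remains \emph{finite} (i.e.\ no newly extracted exceptional divisor is contracted by $f_Y$). This requires a careful analysis of finite crepant pull-backs of qdlt singularities, exploiting the abelian-quotient description in Definition~\ref{def:qdlt} and the compatibility of this quotient structure with base change by $f$. Once this lifting is in place, the remaining claims follow formally from the valuation-theoretic description of $\mathcal{D}(f)$.
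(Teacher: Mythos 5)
Your construction is essentially the paper's: fix a qdlt modification $(Y_2,B_2)\to(X,\Delta)$, take the normalization of the fibre product to get a finite crepant lift $f_Y\colon(Y_1,B_1)\to(Y_2,B_2)$, use the simplicial map of Definition~\ref{def:map-between-dc}, and transfer through the PL-identifications of the dual complexes with $\mathcal{D}(X,\Delta)$; the surjectivity argument (extract a given lc place $F$ on a qdlt model and lift) is also the same. Where you genuinely diverge is in establishing independence of the choices and the compatibility with arbitrary diagrams: you do this via the intrinsic description of the vertex map as push-forward of divisorial valuations ($v_E\mapsto$ the normalization of $v_E\circ f^*$, with Riemann--Hurwitz showing lc places go to lc places), whereas the paper instead passes to a common log resolution of two qdlt models and chases a commutative cube of dual complexes. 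Your route is arguably cleaner at the level of vertices, but note that agreement on vertices of a fixed triangulation does not by itself pin down the PL map; to close this you should add that every rational point of $\mathcal{D}(X,\Delta)$ is a vertex of some qdlt model, so the two candidate continuous maps agree on a dense set and hence coincide. Finally, the step you flag as ``the main obstacle'' --- that the normalized fibre product of a qdlt modification under a crepant finite endomorphism is again qdlt, with the lift still finite --- is exactly the assertion the paper makes without proof (``Then, the pair $(Z_1,\Gamma_1)$ is also a qdlt pair''), so you are not missing an idea the paper supplies; it follows from the toroidal/Abhyankar description of qdlt singularities near lc centers, but since your whole argument (like the paper's) rests on it, it deserves a proof rather than a caveat.
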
 

\begin{proof}
First, we aim to define the continuous function $\mathcal{D}(f)$.
Fix a qdlt modification $(Z_2,\Gamma_2) \to (X,\Delta)$  and complete the diagram:
\[
\xymatrix{
(Z_1,\Gamma_1) \ar[r]^-{f_Z} \ar[d] & \ar[d] (Z_2,\Gamma_2) \\
(X,\Delta) \ar[r]_-f & (X,\Delta) 
}
\]
by taking the normalization of the fiber product. 
Then, the pair $(Z_1,\Gamma_1)$ is also a qdlt pair. 
The finite map $f_Z$ induces a continuous map $\mathcal{D}(f_Z) \colon \mathcal{D}(Z_1,\Gamma_1) \to \mathcal{D}(Z_2,\Gamma_2)$ (see Definition~\ref{def:map-between-dc}). Fix PL-isomorphisms $\phi_1\colon \mathcal{D}(Z_1,\Gamma_1) \to \mathcal{D}(X,\Delta)$ and $\phi_2\colon \mathcal{D}(Z_2,\Gamma_2) \to \mathcal{D}(X,\Delta)$ (see~\cite[Proposition 11 and Corollary 38]{dFKX17}). Then, we define $\mathcal{D}(f)$ 
to be the unique continuous function making the following diagram commute:
\[
    \xymatrix{
        \mathcal{D}(Z_1,\Gamma_1) \ar[r]^-{\mathcal{D}(f_Z)} \ar[d]_-{\phi_1} & \mathcal{D}(Z_2,\Gamma_2) \ar[d]^-{\phi_2}\\
        \mathcal{D}(X,\Delta) \ar[r]_-{\mathcal{D}(f)} & \mathcal{D}(X,\Delta)
    }
\]
By definition $\mathcal{D}(f)$ is a continuous map between topological spaces which is also a continuous map between PL-pseudomanifolds for two (possibly different) PL-structures on $\mathcal{D}(X,\Delta)$. 

Consider now a commutative diagram 
\[
\xymatrix{
(Y_1,B_1) \ar[r]^-{f_Y} \ar[d] & \ar[d] (Y_2,B_2) \\
(X,\Delta) \ar[r]_-f & (X,\Delta) 
}
\]
as in the statement. Let $(W_2,\Omega_2)$ be a common log resolution of $(Z_2,\Gamma_2) \to (X,\Delta) \leftarrow (Y_2,B_2)$, and define $(W_1,\Omega_1)$ by taking log pull-back of $(X,\Delta)$ to the normalization of the fiber product in the following commutative diagram
\[
\xymatrix{
(W_1,\Omega_1) \ar[r]^-{f_W} \ar[d] & \ar[d] (W_2,\Omega_2) \\
(Y_1,B_1) \ar[r]_-{f_Y} & (Y_2,B_2) 
}
\]
Therefore, we obtain the following commutative diagram of crepant birational morphisms:
\[
    \xymatrix{
        (W_1,\Omega_1) \ar[rr] \ar[rd] \ar[dd] & & (W_2,\Omega_2) \ar[rd] \ar'[d][dd] \\
            & (Z_1,\Gamma_1) \ar[rr] \ar[dd] & & (Z_2,\Gamma_2) \ar[dd]\\
        (Y_1,B_1) \ar'[r][rr] \ar[rd] & & (Y_2,B_2) \ar[rd]&\\
            & (X,\Delta) \ar[rr] & & (X,\Delta).
    }
\]
Then, we obtain a commutative diagram of dual complexes 
\[
    \xymatrix{
        \mathcal{D}(W_1,\Omega_1) \ar[rr]^-{\mathcal{D}(f_W)} \ar[rd]_-{\psi_1} \ar[dd]_-{\pi_1} & & \mathcal{D}(W_2,\Omega_2) \ar[rd]^-{\psi_2} \ar'[d][dd]^-{\pi_2} \\
            & \mathcal{D}(Z_1,\Gamma_1) \ar[rr]^-(.3){\mathcal{D}(f_Z)} \ar[dd]^(0.7){\phi_1} & & \mathcal{D}(Z_2,\Gamma_2) \ar[dd]^-{\phi_2} \\
        \mathcal{D}(Y_1,B_1) \ar'[r][rr]^-{\mathcal{D}(f_Y)}  & & \mathcal{D}(Y_2,B_2)  &\\
            & \mathcal{D}(X,\Delta) \ar[rr]^-{\mathcal{D}(f)} & & \mathcal{D}(X,\Delta)
    }
\]
where the maps $\psi_i$ and $\pi_i$ are PL-isomorphisms which are sequences of collapses and the maps $\mathcal{D}(f_Y)$ and $\mathcal{D}(f_W)$ are defined as in Definition~\ref{def:map-between-dc}.
By construction, we know that $\mathcal{D}(f_Y)(v_{E_1})=v_{E_2}$.
By the commutativity of the diagram, we get
$\mathcal{D}(f_W)(v_{E_1})=v_{E_2}$.
Using the commutativity of the diagram again, we conclude that $\mathcal{D}(f)(v_{E_1})=v_{E_2}$.

The previous paragraph implies that $\mathcal{D}(f)(\mathcal{D}(X,\Delta)_\qq)\subseteq \mathcal{D}(X,\Delta)_\qq$.
Henceforth, in a integral lattice representation of $\mathcal{D}(X,\Delta)$,
the continuous map $\mathcal{D}(f)$
is a piece-wise linear rational function.
Furthermore, 
by construction, we know that $\mathcal{D}(f)^{-1}(v)$ is a finite set for every $v\in \mathcal{D}(X,\Delta)$. 
Even more, this set is not empty, as for any rational point $v_E \in \mathcal{D}(X,\Delta)_\qq$ we can take a qdlt modification $(Z_2,\Gamma_2)$ of $(X,\Delta)$ that extracts $E$
and complete the diagram as in the first paragraph of the proof.
We conclude that the
pre-image via $\mathcal{D}(f)$ of any rational point of $\mathcal{D}(X,\Delta)$
is a finite set of rational points. 
\end{proof} 

\begin{theorem}\label{thm:D(f)-bijection}
Let $(X,\Delta)$ be a log Calabi--Yau pair.
Let $f\colon (X,\Delta)\rightarrow (X,\Delta)$ be an int-amplified endomorphism.
Then, the continuous function $\mathcal{D}(f)$ is a bijection.
\end{theorem}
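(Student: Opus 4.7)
The plan is to prove bijectivity of $\mathcal{D}(f)$ via surjectivity followed by injectivity, with the injectivity being the core difficulty.

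For \textbf{surjectivity}, the last paragraph of the proof of Theorem~\ref{thm:def-D(f)} shows that $\mathcal{D}(f)^{-1}(v)$ is a nonempty finite set of rational points for every $v\in\mathcal{D}(X,\Delta)_\qq$. Since $\mathcal{D}(f)$ is continuous and $\mathcal{D}(X,\Delta)$ is compact, $\mathcal{D}(f)$ is a closed map; its image is then closed and contains the dense subset $\mathcal{D}(X,\Delta)_\qq$, so it equals $\mathcal{D}(X,\Delta)$.

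For \textbf{injectivity}, I would use the factorization $\mathcal{D}(f)=\phi_2\circ\mathcal{D}(f_Z)\circ\phi_1^{-1}$ from Theorem~\ref{thm:def-D(f)} with PL-isomorphisms $\phi_1,\phi_2$, so it suffices to show that $\mathcal{D}(f_Z)$ is a simplicial isomorphism, where $f_Z\colon(Z_1,\Gamma_1)\to(Z_2,\Gamma_2)$ is the finite crepant morphism between qdlt models with $(Z_1,\Gamma_1)$ the normalized fiber product. As $\mathcal{D}(f_Z)$ is simplicial by construction, this reduces further to showing that $f_Z$ induces a bijection between the prime components of $\lfloor\Gamma_1\rfloor$ and those of $\lfloor\Gamma_2\rfloor$. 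Surjectivity of this component map is automatic from the fiber product structure. To rule out two distinct components $E'_1,E'_2\subset\lfloor\Gamma_1\rfloor$ both mapping to a single $E\subset\lfloor\Gamma_2\rfloor$, I would iterate: forming $n$-fold normalized fiber products produces at least $2^n$ components of the iterated qdlt modification mapping to $E$, hence at least $2^n$ distinct divisorial log canonical places of $(X,\Delta)$ with centers contained in $f^{-n}(\pi_2(E))\subset X$. The int-amplified hypothesis, combined with a boundedness argument for log canonical places over a bounded family of centers in the spirit of Lemma~\ref{lem:finite-covers}, should then contradict this exponential growth.

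Once the bijection on vertices is established, preservation of intersection strata follows from the crepant finite property between qdlt pairs together with the standard coefficient condition (Lemma~\ref{lem:stand-coeff}). Hence $\mathcal{D}(f_Z)$ is a simplicial isomorphism, and $\mathcal{D}(f)$ is a PL-homeomorphism of $\mathcal{D}(X,\Delta)$, in particular a bijection. The main obstacle is the component injectivity in the previous paragraph: the int-amplified hypothesis must be used essentially to rule out the splitting of a component under iterated normalized fiber products, as the conclusion genuinely fails for arbitrary finite crepant endomorphisms of log Calabi--Yau pairs. A careful iteration argument controlling the dynamics of log canonical places under $f^n$ is the most delicate point of the proof.
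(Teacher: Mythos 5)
Your surjectivity argument is fine, and the reduction of injectivity to injectivity of the vertex map of $\mathcal{D}(f_Z)$ is reasonable. The problem is the step you yourself flag as delicate: ruling out two components $E_1',E_2'\subset\lfloor\Gamma_1\rfloor$ over a single $E\subset\lfloor\Gamma_2\rfloor$. The contradiction you propose --- that iterating the normalized fiber product produces $2^n$ distinct divisorial log canonical places with centers in a controlled locus, which ``should'' violate boundedness --- does not follow from anything available. A log Calabi--Yau pair typically has \emph{infinitely} many divisorial log canonical places (every rational point of a positive-dimensional dual complex is one, and already the non-pdlt point $p$ in Example~\ref{ex:not-pdlt} is the center of infinitely many of them), so exponential growth of their number is not in itself contradictory. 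Lemma~\ref{lem:finite-covers} bounds families of covers of bounded degree; it says nothing about the number of log canonical places over a fixed center. As written, the core of the injectivity proof is missing.

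The paper closes exactly this gap by a different mechanism, which you will need in some form. Since $\mathcal{D}(X,\Delta)$ is a pseudomanifold, a failure of bijectivity forces $\deg\mathcal{D}(f)>1$, hence $\deg\mathcal{D}(f^n)\to\infty$. On the other hand, by Lemma~\ref{lem:stand-coeff} the coefficients are standard, so the Galois closure $(Y,\Delta_Y)\to(X,\Delta)$ of $f^n$ is again a log Calabi--Yau pair and $f^n$ is identified with the quotient by a finite group $G\leqslant\Aut(Y,\Delta_Y)$. The Jordan-type theorem \cite[Theorem 5.1]{Mor21} gives a constant $c$, depending only on $\dim X$, such that $G$ has a normal subgroup of index at most $c$ acting trivially on the dual complex; hence all $G$-orbits on $\mathcal{D}(Y,\Delta_Y)$ have size at most $c$ and $\deg\mathcal{D}(f^n)\leq c$ for every $n$, a contradiction. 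This uniform bound on finite group actions on dual complexes (or an equivalent substitute) is the essential input your argument lacks; without it, the int-amplified hypothesis enters your sketch only as an aspiration.
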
 

\begin{proof}
Assume that $\mathcal{D}(f)$ is not a bijection. As $\mathcal{D}(X,\Delta)$ is a pseudomanifold, we have that $\deg \mathcal{D}(f) > 1$, and hence the degree of $\mathcal{D}(f^n)$ diverges.

By \cite[Theorem 5.1]{Mor21}, if $G$ is a finite subgroup of $\Aut(X,\Delta)$, then there exists a positive integer $c$, only depending on the dimension of $X$, such that $G$ has a normal subgroup $A\trianglelefteq G$ of index at most $c$ that acts trivially on $\mathcal{D}(X,\Delta)$.

Consider $n$ such that $\deg \mathcal{D}(f^n) > c$ and let $g\colon (Y,\Delta_Y) \to (X,\Delta)$ the Galois closure of $f^n$. By Lemma \ref{lem:stand-coeff}, $\Delta$ has standard coefficients, so $(Y,\Delta_Y)$ is a log Calabi--Yau pair and $\Delta_Y$ has standard coefficients. Let $G\leq \Aut(X,\Delta)$ be a finite group such that we have the following commutative diagram

\[
    \xymatrix{
        (Y,\Delta_Y) \ar[d]_-g \ar[r]^-{/G} &  (X/G,\Delta/G) \ar[d]^\cong\\
        (X,\Delta) \ar[r]_-{f^n} & (X,\Delta). 
        }
\]
Then, the size of the orbits of the action of $G$ on $(Y,\Delta_Y)$ is at most $c$, which implies that $\deg \mathcal{D}(f^n) \leq c$ for all $n\in \nn$. Therefore, $\deg \mathcal{D}(f) = 1$ and $\mathcal{D}(f)$ is a bijection.
\end{proof} 

\section{Lifting endomorphisms to a partially dlt modification}
\label{sec:lifting}

In this section, we prove the second main theorem of the article.
We show that an int-amplified endomorphism 
of a log Calabi--Yau pair $(X,\Delta)$ lifts
to a suitable pdlt modification.

\begin{theorem}\label{thm:lifting-pdlt}
Let $X$ be a variety with klt type singularities
and let $(X,\Delta)$ be a log Calabi--Yau pair admitting an int-amplified endomorphism $f$.
Then, there exists a positive integer $n$, a pdlt modification $\pi\colon (Y,\Delta_Y)\rightarrow (X,\Delta)$, and an int-amplified endomorphism $g$ of $(Y,\Delta_Y)$ making the following diagram commutative:
\[
\xymatrix{
(Y,\Delta_Y)\ar[d]_-{\pi} \ar[r]^-{g} & (Y,\Delta_Y)\ar[d]^-{\pi} \\
(X,\Delta)\ar[r]^-{f^n} & (X,\Delta).
}
\]
\end{theorem}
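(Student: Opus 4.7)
The plan is to argue by induction on a complexity measure of the non-pdlt locus of $(X,\Delta)$. At each inductive step, the goal is to extract an $f$-invariant log canonical place above a non-pdlt center, thereby turning that place into a component of the boundary and reducing the non-pdlt locus. Since we must do this compatibly with $f$, the extracted place has to correspond to a fixed vertex of some iterate of the induced continuous self-map $\mathcal{D}(f)\colon \mathcal{D}(X,\Delta)\to \mathcal{D}(X,\Delta)$ furnished by Theorem~\ref{thm:def-D(f)}. All the real content of the proof is in producing such a fixed log canonical place over each non-pdlt center; once we have it, the extraction and the lifting of the endomorphism follow formally from the machinery set up in Section~\ref{sec:dynamics}.

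To produce a fixed place, I would first note that $f$ permutes the finitely many non-pdlt centers of $(X,\Delta)$, so after replacing $f$ with an iterate we may fix a non-pdlt center $Z$ with $f(Z)=Z$. Let $\mathcal{D}_Z\subset \mathcal{D}(X,\Delta)$ denote the sub-complex spanned by the log canonical places of $(X,\Delta)$ whose center on $X$ is contained in $Z$. Because the centers of log canonical places map consistently under $f$, the continuous map $\mathcal{D}(f)$ restricts to a self-map of $\mathcal{D}_Z$, which is a bijection by Theorem~\ref{thm:D(f)-bijection}. Invoking the local collapsibility statement (Lemma~\ref{lem:collapsible-local}), the sub-complex $\mathcal{D}_Z$ is collapsible, and in particular contractible. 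Hence any continuous self-map of $\mathcal{D}_Z$ has a fixed point by the Lefschetz fixed point theorem. Using that $\mathcal{D}(f)$ is piecewise rational-linear in any integral lattice representation and sends rational points to rational points (Theorem~\ref{thm:def-D(f)}), we may select the fixed point inside $\mathcal{D}(X,\Delta)_\qq$; after replacing $f$ once more by an iterate to promote a periodic rational point to a genuinely fixed one, we obtain a log canonical place $E$ of $(X,\Delta)$ with center contained in $Z$ and with $v_E$ fixed by $\mathcal{D}(f)$.

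Given such an $E$, I would take a qdlt modification $\pi_1\colon (X_1,\Delta_1)\to (X,\Delta)$ extracting $E$ and apply the construction used in the proof of Theorem~\ref{thm:def-D(f)}: since $\mathcal{D}(f)$ fixes $v_E$, the appropriate normalization of the fiber product yields an int-amplified endomorphism $f_1$ of $(X_1,\Delta_1)$ lifting the chosen iterate of $f$. The divisor $E$ now appears as a component of $\lfloor \Delta_1\rfloor$, so $Z$ is no longer a non-pdlt center of $(X_1,\Delta_1)$, while no new non-pdlt centers are introduced outside $\pi_1^{-1}(Z)$. Iterating this procedure strictly decreases the natural complexity of the non-pdlt locus (for example, the number of non-pdlt centers together with their codimensions), so after finitely many steps the procedure terminates with a pdlt modification $(Y,\Delta_Y)\to (X,\Delta)$ carrying an int-amplified endomorphism $g$ that lifts some $f^n$, as required.

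The main obstacle is the topological input Lemma~\ref{lem:collapsible-local}: without the local collapsibility of the dual complex over $X$, there is no reason for a continuous self-map of $\mathcal{D}_Z$ to admit a fixed point, and the entire extraction strategy breaks. A secondary technical point, which I expect to be straightforward but requires care, is verifying that the extraction of an $f$-invariant log canonical place above $Z$ does not create new non-pdlt centers of complexity comparable to $Z$, so that the induction genuinely terminates.
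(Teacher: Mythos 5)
Your overall strategy coincides with the paper's: replace $f$ by an iterate fixing a non-pdlt center $Z$, use Lemma~\ref{lem:collapsible-local} plus the Lefschetz fixed point theorem and the piecewise rational-linearity of $\mathcal{D}(f)$ to find a rational fixed point $v_E$ over $Z$, extract $E$, lift the endomorphism, and induct on the number of non-pdlt centers. The key lemmas you invoke (collapsibility, bijectivity of $\mathcal{D}(f)$, Theorem~\ref{thm:def-D(f)}) are exactly the ones the paper uses.

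However, there is a genuine gap where you assert that, once $v_E$ is fixed by $\mathcal{D}(f)$, "the extraction and the lifting of the endomorphism follow formally." Let $\pi_1\colon (Y_1,\Delta_{Y_1})\to (X,\Delta)$ extract only $E$, and let $(Y_2,\Delta_{Y_2})$ be the normalization of the fiber product of $\pi_1$ with $f$. The bijectivity of $\mathcal{D}(f)$ shows that $Y_2\to X$ extracts only the divisor $E$ as well, but two birational models of $(X,\Delta)$ extracting the same single divisor need not be isomorphic: they can differ by small birational modifications. So at this point you do not yet have a self-map of $(Y_1,\Delta_{Y_1})$. The paper closes this gap by considering the models $(Y_m,\Delta_{Y_m})$ obtained from the fiber products with $f^m$, passing to small $\qq$-factorializations, and using that $-E$ is big and nef over $X$ so that $Y_1'\to X$ is a relative Mori Dream Space: this gives finitely many small $\qq$-factorial models and finitely many intermediate contractions, so the $Y_m$ stabilize and $(Y_m,\Delta_{Y_m})\simeq(Y_1,\Delta_{Y_1})$ for $m\gg 0$ after a further iterate of $f$. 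Without some such argument your construction only produces a tower of different models rather than an endomorphism $g$ of a fixed pdlt modification. A second, smaller omission: you never verify that the lifted finite map $g$ is int-amplified; the paper does this by noting $g^*\pi_1^*A-\pi_1^*A=\pi_1^*((f^n)^*A-A)$ is big and applying~\cite[Theorem 3.3]{Men17}.
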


We will need the following notation
and lemma below.

\begin{notation}\label{not:Dual-over-Z}
{\em 
Let $(X,\Delta)$ be a log pair.
Let $Z$ be a log canonical center of $(X,\Delta)$.
We denote by $\mathcal{D}(X,\Delta;Z)$ the subset of $\mathcal{D}(X,\Delta)$
which is the smallest subcomplex
that contains all the log canonical places
whose center is $Z$.
}
\end{notation}

\begin{lemma}\label{lem:collapsible-local}
Let $X$ be a klt type variety and 
let $(X,\Delta)$ be a log Calabi--Yau pair.
Let $Z\subset X$ be a log canonical center.
Then, the simplicial complex $\mathcal{D}(X,\Delta;Z)$ is collapsible.
\end{lemma}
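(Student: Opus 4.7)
The plan is to use a relative MMP over $X$ on a $\qq$-factorial dlt modification of $(X,\Delta)$, interpreting divisorial contractions as elementary collapses of the dual subcomplex.

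First, I would fix a $\qq$-factorial dlt modification $\pi\colon (Y,\Gamma)\to (X,\Delta)$ (which exists by~\cite[Theorem 3.1]{KK10}). Under this choice, the subcomplex $\mathcal{D}(X,\Delta;Z)$ is identified with the dual complex of the reduced divisor $E_Z$ consisting of those components of $\lfloor\Gamma\rfloor$ whose image under $\pi$ equals $Z$. Set $F:=\lfloor\Gamma\rfloor-E_Z$ for the remaining components of $\lfloor\Gamma\rfloor$.

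Next, I would invoke the klt type hypothesis on $X$: there exists an effective divisor $\Delta_X\geq 0$ with $(X,\Delta_X)$ klt, and pulling back yields a sub-klt pair $(Y,\Gamma_X)$ with $K_Y+\Gamma_X=\pi^*(K_X+\Delta_X)$. Combining with the dlt structure on $(Y,\Gamma)$, one checks that for any $0<\delta\ll\epsilon\ll 1$ the pair $(Y,\Gamma-\epsilon E_Z-\delta F)$ is klt, and since $K_Y+\Gamma\sim_{\qq,X}0$ it satisfies
\[
K_Y+\Gamma-\epsilon E_Z-\delta F\sim_{\qq,X}-\epsilon E_Z-\delta F.
\]

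I would then run the relative MMP over $X$ for this klt pair; the klt type hypothesis on $X$ together with the klt structure on $Y$ guarantees existence and termination. By the relative choice $\epsilon\gg\delta$, every extremal ray contracted is $E_Z$-positive, so the divisorial contractions remove components of $E_Z$ one at a time. By the interpretation of MMP steps on dual complexes developed in \cite{dFKX17}, each such divisorial contraction corresponds to an elementary collapse of $\mathcal{D}(X,\Delta;Z)$ along the associated vertex $v_E$, while flipping steps induce PL-isomorphisms that preserve the subcomplex. Once $E_Z$ has been entirely contracted, $\mathcal{D}(X,\Delta;Z)$ has been collapsed down to a point, which establishes collapsibility.

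The main obstacle is verifying that each divisorial contraction in the MMP yields a genuine elementary collapse, i.e.\ that the vertex $v_E$ to be removed is at each stage a free face of an ambient simplex of the subcomplex, rather than producing a more general PL-homotopy. I would address this via a careful ordering of the extremal contractions by tuning the perturbation parameters $\epsilon,\delta$ and by a local analysis of the intersection configuration of $E_Z$ in $Y$, most likely by induction on the number of irreducible components of $E_Z$.
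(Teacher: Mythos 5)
Your overall strategy --- use the klt type hypothesis to build an auxiliary klt pair, run a relative MMP over $X$, and invoke the collapsibility machinery of \cite{dFKX17} --- is exactly the toolbox the paper uses, but the execution has a sign error that breaks the argument. After your perturbation you have $K_Y+\Gamma-\epsilon E_Z-\delta F\sim_{\qq,X}-(\epsilon E_Z+\delta F)$, i.e.\ the log canonical divisor is (up to the non-exceptional part of $F$) \emph{anti}-effective and exceptional over $X$. The mechanism that forces an MMP to contract a divisor (Lai's lemma, used in the paper as \cite[Lemma 2.10]{Lai11}) requires $K+B\sim_{\qq,X}D$ with $D\geq 0$ exceptional: then $(K+B)$-negative rays satisfy $D\cdot R<0$, so they live inside $\supp D$ and the MMP terminates only once $D$ is gone. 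With your sign, a negative ray satisfies $(\epsilon E_Z+\delta F)\cdot R>0$, and a divisorial contraction contracts the divisor that is \emph{negative} on the ray --- which is then not forced to be a component of $E_Z$ at all. In the simplest test case, where $E_Z$ is a single exceptional divisor with $-E_Z$ relatively nef (e.g.\ the exceptional curve of a point blow-up on a surface), your pair is already a relative minimal model and the MMP does nothing. A second, related problem: even with the correct sign, running the MMP on the full dlt modification and applying \cite[Theorem 19]{dFKX17} would give a collapse of the \emph{ambient} complex $\mathcal{D}(X,\Delta)$ onto the part spanned by the surviving divisors; that is not the same as collapsing the subcomplex $\mathcal{D}(X,\Delta;Z)$ to a point.

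The paper resolves both issues at once by changing the model rather than perturbing on the dlt modification. Take $\Gamma=(1-\epsilon)B+\epsilon\Delta$ klt on $X$ (this is where the klt type hypothesis enters), and use \cite[Theorem 1]{Mor19} to extract \emph{only} the divisors $P_1,\dots,P_t$ with center $Z$ on a model $g\colon (Y',\Gamma_{Y'})\to(X,\Gamma)$; since $(X,\Gamma)$ is klt, each $P_i$ appears in $\Gamma_{Y'}$ with coefficient in $(0,1)$. Adding the effective exceptional divisor $E$ that raises these coefficients to $1$ yields $K_{Y'}+\Gamma_{Y'}+E\sim_{\qq,X}E\geq 0$ --- the correct sign --- and, crucially, $\mathcal{D}(Y',\Gamma_{Y'}+E)=\mathcal{D}(X,\Delta;Z)$, so the $(K_{Y'}+\Gamma_{Y'}+E)$-MMP over $X$ contracts all of $\sum P_i$ and \cite[Lemma 23 and Theorem 19]{dFKX17} gives collapsibility of precisely the subcomplex you want. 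Your final worry about whether each divisorial contraction is a genuine elementary collapse is not something you need to re-prove by hand; it is the content of \cite[Theorem 19]{dFKX17}.
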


\begin{proof}
Let $f\colon (Y,\Delta_Y) \to (X,\Delta)$ 
be a dlt modification,
and let $P_1,\ldots, P_t$ be the divisors in $Y$ whose center is $Z$.
As $X$ is of klt type, 
there exists an effective divisor $B$ on $X$
such that $(X,B)$ is klt,
and in particular, for $0 < \epsilon \ll 1$,
the pair $(X,(1-\epsilon)B + \epsilon \Delta)$
is klt.
Call $\Gamma := (1-\epsilon)B + \epsilon \Delta$. 
By ~\cite[Theorem 1]{Mor19}, there exists a
projective birational morphism 
$g\colon (Y',\Gamma_{Y'}) \to (X,\Gamma)$ 
that extracts exactly $P_1,\ldots, P_t$ 
with coefficients in $(0,1)$.

Consider the divisor $E$ on $Y'$
such that $(\Gamma_{Y'}+E)^{=1} = \sum_{i=1}^t P_i$.
The pair $(Y',\Gamma_{Y'} + E)$ is log canonical
and, by definition, we have
$\mathcal{D}(Y',\Gamma_{Y'}+E) = \mathcal{D}(X,\Delta;Z)$.

Notice that $K_{Y'} + \Gamma_{Y'} + E \sim_{\qq,X} E$,
and $E$ is effective and exceptional. 
Then the $(K_{Y'} + \Gamma_{Y'} + E)$-MMP 
over $X$ terminates, say
$\phi \colon (Y',\Gamma_{Y'} + E) \dasharrow (W,\Gamma_W)$, 
and it contracts all the prime components of $E$.
Hence, $\phi_*\left((\Gamma_{Y'}+E)^{=1}\right) = 0$,
and by ~\cite[Lemma 23 and Theorem 19]{dFKX17} 
this implies that 
$\mathcal{D}(X,\Delta;Z) = \mathcal{D}(Y',\Gamma_{Y'}+E)$
collapses to a point.
Thus, $\mathcal{D}(X,\Delta;Z)$ is collapsible.
\end{proof}

\begin{proof}[Proof of Theorem~\ref{thm:lifting-pdlt}]
Assume that $(X,\Delta)$ is not a pdlt pair, and let $Z$ be a non-pdlt center.
First, notice that for $n\gg 0$, 
$f^n(Z) = Z$
as $X$ has finitely many non-pdlt centers,
and $f$ sends log canonical centers to 
log canonical centers.
We replace $f$ with $f^n$.
From Lemma~\ref{lem:collapsible-local}, 
we know that the set $\mathcal{D}(X,\Delta;Z)$ 
is a finite collapsible simplicial complex.
Choosing an integral lattice representation of $\mathcal{D}(X,\Delta;Z)$, we see that $\mathcal{D}(f)$ is a piecewise linear rational function of a collapsible finite simplicial complex. 
Lefschetz theorem implies that $\mathcal{D}(f)$ has a fixed point in $\mathcal{D}(X,\Delta;Z)$ and the fact that $\mathcal{D}(f)$ is a rational piecewise linear function implies that we can find a rational fixed point.
Thus, there is a $v_E\in \mathcal{D}(X,\Delta;Z)$ that is fixed under the action of $\mathcal{D}(f)$.
Let $\pi_1\colon (Y_1,\Delta_{Y_1}) \to (X,\Delta)$ be
a projective birational morphism that only extracts $E$ (see e.g.,~\cite[Theorem 1]{Mor19}).
We complete the diagram 
\begin{equation}\label{eq:diagram-lift-pdlt}
    \xymatrix{
    (Y_2,\Delta_{Y_2}) \ar[d]_{\pi_2} \ar[r]^{g_2} & (Y_1,\Delta_{Y_1}) \ar[d]^{\pi_1}\\
    (X,\Delta) \ar[r]_{f} & (X,\Delta)
    }
\end{equation}
by  taking normalization of the fiber product. We claim that $(Y_2,\Delta_{Y_2}) \to (X,\Delta)$ also only extracts $E$. 

Let $F$ be a divisor in $(Y_2,\Delta_{Y_2})$ whose center is $Z$, and so $g_2(F) = E$.
Take $(Z_1,\Delta_{Z_1}) \to (Y_1,\Delta_{Y_1})$ 
a qdlt modification, and complete the diagram to obtain:
\[
    \xymatrix{
    (Z_2,\Delta_{Z_2}) \ar[r]^{g_2'} \ar[d] & (Z_1,\Delta_{Z_1}) \ar[d]\\
    (Y_2,\Delta_{Y_2}) \ar[d]_{\pi_1} \ar[r]^-{g_2} & (Y_1,\Delta_{Y_1}) \ar[d]^{\pi_1}\\
    (X,\Delta) \ar[r]_{f} & (X,\Delta).
    }
\]
Then, $g_2'(F) = E$, but the diagram 
\[
    \xymatrix{
    (Z_2,\Delta_{Z_2}) \ar[r]^{g_2'} \ar[d] & (Z_1,\Delta_{Z_1}) \ar[d]\\
    (X,\Delta) \ar[r]_{f} & (X,\Delta)
    }
\]
satisfies the conditions of Theorem \ref{thm:def-D(f)}, 
hence $\mathcal{D}(f)(v_F) = v_E$. 
By Theorem \ref{thm:D(f)-bijection}, 
the map $\mathcal{D}(f)$ is a bijection, 
so $v_F = v_E$, implying that $F = E$.

Let $\pi_m\colon (Y_m,\Delta_m) \to (X,\Delta)$ 
be the morphism obtained, 
as in diagram~\eqref{eq:diagram-lift-pdlt}, by
\[
    \xymatrix{
     (Y_m,\Delta_{Y_m}) \ar[d]_{\pi_m} \ar[r]^{g_m} & (Y_1,\Delta_{Y_1}) \ar[d]^{\pi_1}\\
     (X,\Delta) \ar[r]_{f^m} & (X,\Delta),
    }
\]
where, for all $m \geq 1$, $\pi_m\colon (Y_m,\Delta_{Y_m}) \to (X,\Delta)$ only extracts $E$.

For $m\geq 1$, let $(Y'_m,\Delta_{Y'_m}) \to (Y_m,\Delta_{Y_m})$ be a small $\qq$-factoralization. Then, as the divisor $-E$ is big and nef over $X$, 
the morphism $Y'_1 \to X$
is a relative Mori Dream Space.
In particular, as mentioned in the proof of Theorem \ref{thm:lift-to-Q-fact}, there exists $n \in \nn$ such that for infinitely many $m\in \nn$, $(Y'_m,\Delta_{Y'_m})$
is isomorphic to $(Y'_n,\Delta_{Y'_n})$.
Therefore, we may assume, 
by possibly replacing $(Y_1,\Delta_{Y_1})$ with $(Y_n,\Delta_{Y_n})$ and
$f$ with a higher power,
that there exists a $\qq$-factorial pair 
$(Y',\Delta_{Y'})$ such that 
$(Y',\Delta_{Y'}) \to (Y_m,\Delta_m)$ 
is a small $\qq$-factoralization 
for all $m$.

\[
    \xymatrix{
        (Y',\Delta_{Y'}) \ar[d] & (Y',\Delta_{Y'}) \ar[d]\\
        (Y_m,\Delta_{Y_m}) \ar[d]_{\pi_m} \ar[r]^{g_m} & (Y_1,\Delta_{Y_1}) \ar[d]^{\pi_1}\\
        (X,\Delta) \ar[r]_{f^m} & (X,\Delta)
    }
\]

As the morphism $Y' \to X$ 
is a relative Mori Dream Space,
it has only finitely many 
intermediate contractions 
$Y'\rightarrow Y \rightarrow X$.
Hence, there exists $n,m \in \nn$, such that the pairs
$(Y_{n},\Delta_{Y_{n}})$ must be isomorphic 
to $(Y_m,\Delta_{Y_m})$.

Therefore, replacing $(Y_{1},\Delta_{Y_{1}})$ with $(Y_{n},\Delta_{Y_{n}})$, we obtain the commutative diagram 
\[
    \xymatrix{
     (Y_1,\Delta_{Y_1}) \ar[d]_{\pi_1} \ar[r]^{g} & (Y_1,\Delta_{Y_1}) \ar[d]^{\pi_1}\\
     (X,\Delta) \ar[r]_{f^n} & (X,\Delta),
    }
\]
where $g$ is a finite morphism.
Finally, we need to prove that 
$g$ is int-amplified. 
Let $A$ be an ample divisor on $X$ such that
$(f^n)^* A - A$ is ample. Then $\pi_1^* A = B$ is big and nef, and so
\[
    g^* B - B = g^*\pi_1^* A - \pi_1^* A =  \pi_1^* (f^n)^*A - \pi_1^* A = \pi_1^*((f^n)^*A - A)
\]
is big. Then, $g^* B - B$ is big 
and $g$ is int-amplified 
by \cite[Theorem 3.3]{Men17}.

The number of non-pdlt centers is finite, 
and $(Y_1,\Delta_{Y_1})$ has strictly less 
non-pdlt centers than $(X,\Delta)$.
Then, after finitely many iterations of the
above procedure, we obtain a 
pdlt modification $\pi\colon (Y,\Delta_Y) \to (X,\Delta)$,
along with an int-amplified endomorphism 
$g\colon (Y,\Delta_Y) \to (Y,\Delta_Y)$ such that
$\pi \circ g = f^n\circ \pi$ for some $n\gg 0$.
\end{proof}

\begin{proof}[Proof of Theorem~\ref{introthm:3}]
Let $X$ be a klt type variety.
Let $(X,\Delta)$ be a log Calabi--Yau pair
admitting an int-amplified endomorphism $f$.
By Theorem~\ref{thm:lifting-pdlt}, up to replacing $f$ with an iteration $f^n$, we have a commutative diagram:
\[
\xymatrix{
(Y,\Delta_Y)\ar[d]_-{\pi} \ar[r]^-{g} &(Y,\Delta_Y)\ar[d]^-{\pi} \\
(X,\Delta) \ar[r]^-{f}  & (X,\Delta),
}
\]
where $\pi$ is a pdlt modification
and $g$ is an int-amplified endomorphism.
By Theorem~\ref{thm:main-pdlt}, we know that $(Y,\Delta_Y)$ is the finite quotient of
a toric log Calabi--Yau fibration over an abelian variety.
\end{proof}

\section{Examples and questions}

In this section, 
we collect some examples
and pose some questions for further research.
First, we show that Theorem~\ref{thm:main-pdlt}
is not valid if we drop the pdlt condition.
In particular, Theorem~\ref{introthm:pol-endo-log-CY} is not valid if we drop the dlt condition.

\begin{example}\label{ex:not-pdlt}
{\em 
Let $\lambda\in \pp^1\setminus \{0,1,\infty\}$.
Then, for each $n\geq 2$, the log Calabi--Yau pair
$(\pp^1,\frac{1}{2}(\{0\}+\{1\}+\{\lambda\}+\{\infty\}))$
admits a polarized endomorphism $f_n$ of degree $n^2$.
Note that $f_n^*\mathcal{O}(1)=\mathcal{O}(n^2)$, so
$f_n$ lifts to a polarized endomorphism $g_n$
of $X:={\rm Bl}_p(\pp^2)\simeq \pp_{\pp^1}(\mathcal{O}\oplus \mathcal{O}(1))$. 
By construction, the polarized endomorphism $g_n$ 
is a polarized endomorphism of the log Calabi--Yau pair
$(X,\Delta):=(X,S_0+S_\infty+\frac{1}{2}(F_0+F_1+F_\lambda+F_\infty))$,
where $S_0$ and $S_\infty$ are the torus invariant sections over $\pp^1$ and the $F_i$'s are fibers over $\pp^1$.
We may assume that $S_0^2=1$ and $S_\infty^2=-1$.
Note that $g_n^*S_0=n^2S_0$.
Thus, the polarized endomorphism $g_n$ descends
to a polarized endomorphism $h_n$ of the ample model of $S_0$, which is $\pp^2$.
The push-forward of $\Delta$ to $\pp^2$ gives an effective divisor $B$ such that the polarized endomorphism $h_n$ is a polarized endomorphism of the log Calabi--Yau pair $(\pp^2,B)$.
Note that $(\pp^2,B)$ does not have pdlt singularities, indeed the point $p\in \pp^2$ is a log canonical center
which is not contained in $\lfloor B\rfloor$.

We argue that no pdlt modification of $(\pp^2,B)$ admits a toric log Calabi--Yau fibration to an elliptic curve.
Note that for any pdlt modification $(Y,B_Y)$ of $(\pp^2,B)$ the boundary divisor $B_Y$ has a component with coefficient $\frac{1}{2}$.
This component cannot be vertical nor horizontal over the elliptic curve.
Thus, the pair $(\pp^2,B)$ has no pdlt modification 
that admits a toric log Calabi--Yau fibration over an abelian variety.

Let $(Y,B_Y)\rightarrow (\pp^2,B)$ be a crepant finite cover of $(\pp^2,B)$, then $(Y,B_Y)$ is not pdlt (see Lemma~\ref{lem:pdlt-descends-finite}).
In particular, $(Y,B_Y)$ does not admit a toric log Calabi--Yau fibration over an abelian variety.

By the two previous paragraphs, we conclude that we cannot produce a toric log Calabi--Yau fibration structure
from $(\pp^2,B)$ by only taking finite covers
or by only taking pdlt modifications. 
However, by Theorem~\ref{introthm:3}, we know that such a structure can be found by 
performing a pdlt modification
followed by a finite cover.
Indeed, the pair $(X,\Delta)$ considered above is already pdlt.
We have a degree $2$ crepant finite morphism $E\rightarrow (\pp^1,\frac{1}{2}(\{0\}+\{1\}+\{\lambda\}+\{\infty\})$ where $E$ is an elliptic curve.
Let $Y:=E \times_{\pp^1} X$ 
and let $(Y,\Delta_Y)$ be the log pull-back of $X$ to $Y$.
Then, the fibration $f_Y\colon (Y,\Delta_Y)\rightarrow E$ is a toric log Calabi--Yau fibration.
Thus, we obtain a commutative diagram 
\[
\xymatrix{
(Y,\Delta_Y) \ar[r]^-{g_X}\ar[d]_-{f_Y} & (X,\Delta) \ar[r]^-{\pi}\ar[d]^-{f} & (\pp^2,B)\\
E \ar[r]_-{g} & (\pp^1,\frac{1}{2}\left(\{0\}+\{1\}+\{\lambda\}+\{\infty\}\right)) & 
}
\]
where $\pi$ is a pdlt modification, 
$g_X$ is a crepant finite morphism, and
$f_Y\colon (Y,\Delta_Y) \rightarrow E$ is a 
toric log Calabi--Yau fibration. 
}
\end{example}

\begin{example}\label{ex:index-one-not-klt}
{\em 
For every integer $n$, we provide a simple example of a $2$-dimensional log Calabi--Yau pair of index $n$ which is not pdlt.

Let $L$ be a line in $\pp^2$ and $p$ be a point not contained in $L$.
Let $L_1,\dots,L_{2n}$ be general lines containing $p$.
Then, the pair $(\pp^2, L+\frac{1}{n}(L_1+\dots+L_{2n}))$ is a log Calabi--Yau pair of index $n$.
Note that $p$ is a log canonical center. 
Indeed, the divisor obtained by blowing up $p$ 
is a log canonical place.
On the other hand, we have
$\lfloor L+\frac{1}{n}(L_1+\dots+L_{2n})\rfloor =L$ which does not contain $p$.
We conclude that $(\pp^2,L+\frac{1}{n}(L_1+\dots+L_{2n}))$ is a log Calabi--Yau pair
that is not pdlt.
}
\end{example}

\begin{example}\label{ex:no-lift-resol}
{\em 
We show an example of a variety $X$ admitting an int-amplified endomorphism
such that no resolution of singularities of $X$ admits an int-amplified endomorphism.

Let $X$ be a rational klt surface 
with $\kappa(X,-K_X)=0$ admitting an int-amplified endomorphism.
Examples of these surfaces can be found in the works of Tokunaga and Yoshihara ~\cite[Example 1.1]{TY95} and Matsuzawa and Yoshikawa ~\cite[Theorem 1.2.(3)]{MY21}).
Let $Y\rightarrow X$ be a resolution of singularities.
Assume that $Y$ admits an int-amplified endomorphism.
Note that $Y$ is a smooth rational surface.
Then, would imply that $Y$ is a smooth toric variety (see e.g.,~\cite[Theorem 1.1]{FN05}).
Since $Y\rightarrow X$ is a contraction, we would get that $X$ is a toric variety as well,
and so $-K_X$ is big.
This contradicts the fact that $\kappa(X,-K_X)=0$.
Thus, we conclude that no 
resolution of $X$ admits an int-amplified endomorphism.
}
\end{example}

Example~\ref{ex:not-pdlt} shows that a log Calabi--Yau pair
$(X,\Delta)$ may admit no finite cover that is a toric log Calabi--Yau fibration over an abelian variety.
However, in that example, the underlying variety $X$ is itself toric. 
Thus, the folklore conjecture is still open in this setting.
We propose the following question.

\begin{question}
Let $X$ be a klt type variety and
let $(X,\Delta)$ be a log Calabi--Yau pair admitting a polarized endomorphism.
Is $X$ a finite quotient of a toric fibration over an abelian variety?
\end{question} 

A common theme of this article
and~\cite{MYY24} 
is reducing the study of polarized endomorphisms
to the setting of Galois polarized endomorphisms.
To do so, in both articles, we study 
polarized endomorphisms that respect certain log 
Calabi--Yau pair structures $(X,\Delta)$ on the variety $X$.
However, it may be possible to reduce 
to the setting of Galois polarized endomorphisms
by using deformations instead.
We propose the following question in this direction.

\begin{question}
Let $f\colon X\rightarrow X$ be a polarized endomorphism
of a klt variety $X$.
Can we find a flat family $f_t\colon X\rightarrow X$ of polarized endomorphisms such that $f_1=f$
and $f_0$ is a Galois polarized endomorphism?
\end{question} 

The previous question is motivated by the toric case
in which we can degenerate a polarized endomorphism
to one defined by monomials in the ambient variety.

\bibliographystyle{habbvr}
\bibliography{references}

\end{document}